\newfont{\msbm}{msbm10 at 11pt}
\newcommand {\R} {\mbox{\msbm R}}
\newcommand {\N} {\mbox{\msbm N}}
\newtheorem{Theo}{Theorem}
\newtheorem{Lemma}[Theo]{Lemma}
\newtheorem{Prop}[Theo]{Proposition}
\newtheorem{Rmk}[Theo]{Remark}
\def\eps{\varepsilon}
\begin{document}
\title{Dynamics of the evolving Bolthausen-Sznitman coalescent}
\author{Jason Schweinsberg\thanks{Supported in part by NSF Grant DMS-0805472} \\
University of California at San Diego}
\maketitle

\footnote{{\it AMS 2000 subject classifications}.  Primary 60J25;
Secondary 60G10, 60G52, 60G55, 92D25}

\footnote{{\it Key words and phrases}.  Bolthausen-Sznitman coalescent, most recent common ancestor, total branch length}

\vspace{-.6in}
\begin{abstract}
Consider a population of fixed size that evolves over time.  At each time, the genealogical structure of the population can be described by a coalescent tree whose branches are traced back to the most recent common ancestor of the population.  As time goes forward, the genealogy of the population evolves, leading to what is known as an evolving coalescent.  We will study the evolving coalescent for populations whose genealogy can be described by the Bolthausen-Sznitman coalescent.  We obtain the limiting behavior of the evolution of the time back to the most recent common ancestor and the total length of the branches in the tree.  By similar methods, we also obtain a new result concerning the number of blocks in the Bolthausen-Sznitman coalescent.
\end{abstract}

\section{Introduction}

Consider a haploid population of fixed size $n$ that evolves over time.  The genealogy of the population at time $t$ can be represented by a coalescent process $(\Pi(s), s \geq 0)$ taking its values in the set of partitions of $\{1, \dots, n\}$, which is defined so that integers $i$ and $j$ are in the same block of $\Pi(s)$ if and only if the $i$th and $j$th individuals in the population at time $t$ have the same ancestor at time $t - s$.  The genealogical structure encoded by the coalescent process can also be represented as a tree ${\cal T}_n(t)$.  The shape of this tree changes over time as the population evolves, leading to what was called in \cite{pw06} an evolving coalescent.  The associated tree-valued stochastic process, for infinite as well as finite populations, was constructed and studied by Greven, Pfaffelhuber, and Winter \cite{gpw08}.  Depperschmidt, Greven, and Pfaffelhuber \cite{dgp11} incorporated mutation and selection into the model.

Rather than studying the full tree-valued process, one can follow the evolution of certain properties of the tree that are of interest.  One such property is the time back to the most recent common ancestor (MRCA) of the population.  This evolution can be described by a process $(A_n(t), t \geq 0)$, where $$t - A_n(t) = \sup\{s: \mbox{ one individual at time }s\mbox{ is the ancestor of all individuals at time }t\}.$$  Note that $A_n(t)$ is the height of the tree ${\cal T}_n(t)$.  The process $(A_n(t), t \geq 0)$ increases linearly at speed one between jumps, and jumps downward when one of the two oldest families in the population dies out, causing a new MRCA to be established.  One can also consider the process $(L_n(t), t \geq 0)$, where $L_n(t)$ denotes the sum of the lengths of all branches in the tree ${\cal T}_n(t)$.  This process is of interest because, assuming that mutations occur at a constant rate $\theta$ along each branch of the coalescent tree, $L_n(t)$ should be roughly proportional to the number of distinct mutations observed in the population at time $t$.

A natural population model to consider is the Moran model \cite{moran}.  In this model, the population size stays fixed at $n$.  Changes in the population occur at times of a homogeneous Poisson process, and we will scale time so that these changes occur at rate $n(n-1)/2$.  At the time of each such change, one of the $n$ individuals is chosen at random to give birth to a new offspring, and independently one of the other $n-1$ individuals is chosen at random to be killed.  For any fixed $t \in \R$, the genealogy of the population follows Kingman's coalescent \cite{king82}, meaning that each pair of lineages merges at rate one and no other transitions are possible.  An analogous construction for infinite populations can be carried out using the lookdown construction of Donnelly and Kurtz \cite{dk99}.  The associated evolving coalescent was studied by Pfaffelhuber and Wakolbinger \cite{pw06}.  They showed that the jumps of the process $(A(t), t \geq 0)$ that follows the time back to the MRCA occur at times of a homogeneous Poisson process, but that the process $(A(t), t \geq 0)$ is not Markov.  They also calculated the distributions of some other quantities, such as the number of individuals in the population at time $t$ that will have descendants in the population when the next MRCA is established and the number of individuals in the population that will become the MRCA of the population in the future.  Delmas, Dhersin, and Siri-Jegousse \cite{dds10} extended these results by considering also the distribution of the sizes of the two oldest families at time $t$.  Simon and Derrida \cite{sd06} did some further work related to the evolution of the MRCA for populations with genealogies governed by Kingman's coalescent, and considered correlations between the time back to the MRCA and a measure of genetic diversity.  Pfaffelhuber, Wakolbinger, and Weisshaupt \cite{pww09} studied the evolution of the total branch length.  They showed that the sequence of processes $(L_n(t) - 2 \log n, t \in \R)$ converges as $n \rightarrow \infty$ in the Skorohod topology to a limit process which is a stationary process with infinite infinitesimal variance.

Evans and Ralph \cite{er10} studied the dynamics of the time back to the MRCA in a population in which a single ``immortal particle" produces offspring at times of a Poisson process, and descendants of the offspring eventually die out.  In this setting, the process $(A(t), t \geq 0)$ is a Markov process whose jump rates and stationary distribution can be calculated explicitly.  An example of a process that fits into this framework is the $\alpha$-stable continuous-state branching process conditioned on nonextinction with $1 < \alpha \leq 2$.

The goal of the present paper is to determine the dynamics of the time back to the MRCA and the total branch length for populations whose genealogy is given by the Bolthausen-Sznitman coalescent.  The Bolthausen-Sznitman coalescent, which was introduced in \cite{bosz98}, is an example of a coalescent with multiple mergers \cite{pit99, sag99}, in which it is possible for many lineages to merge at once.  More precisely, the Bolthausen-Sznitman coalecent started with $n$ blocks is a continuous-time Markov chain taking its values in the set of partitions of $\{1, \dots, n\}$ such that whenever the partition has $b$ blocks, each transition that involves the merger of $k$ blocks into one is happening at rate
\begin{equation}\label{lambk}
\lambda_{b,k} = \int_0^1 x^{k-2} (1-x)^{b-k} \: dx = \frac{(k-2)! (b-k)!}{(b-1)!}.
\end{equation}
This means that the total rate of all transitions when there are $b$ blocks is $$\lambda_b = \sum_{k=2}^b \binom{b}{k} \lambda_{b,k}.$$  It is well-known (see, for example, \cite{pit99}) that this process has the property of sampling consistency, meaning that if $m < n$, then the process restricted to the integers $\{1, \dots, m\}$ is a Bolthausen-Sznitman coalescent started with $m$ blocks.

The reason for focusing on the Bolthausen-Sznitman coalescent is that this coalescent process has arisen in a wide variety of settings.  The Bolthausen-Sznitman coalescent has been shown recently to describe the genealogy of certain populations undergoing selection \cite{bbm1, bdmm2}.  The Bolthausen-Sznitman coalescent also describes the genealogical structure of Neveu's continuous-state branching process \cite{beleg00}, certain Galton-Watson processes with heavy-tailed offspring distributions \cite{sch03}, and Derrida's generalized random energy model \cite{bosz98, bovkur}.  

We describe the population model that we will study in subsection 1.1.  In subsection 1.2, we state our main result concerning the dynamics of the time back to the MRCA.  In subsection 1.3, we state our main result concerning the total branch length.  By similar methods, we also obtain a new result concerning the number of blocks of the Bolthausen-Sznitman coalescent.  We state this result in subsection 1.4.  The rest of the paper is devoted to proofs.

\subsection{A population model}

We now define more precisely the population model that we will study in this paper.  We assume that for all times $t \in \R$, there are exactly $n$ individuals in the population, labeled by the integers $1, \dots, n$.  Changes in the population occur at times of a homogeneous Poisson process on $\R$ with rate $n-1$.  At the time of such a change, one particle is chosen at random to give birth to a random number $\xi$ of new offspring, with
\begin{equation}\label{popmodel}
P(\xi = k) = \frac{n}{n-1} \cdot \frac{1}{k(k+1)}, \hspace{.1in}k = 1, 2, \dots, n-1.
\end{equation}
Then $\xi$ of the $n-1$ individuals who did not give birth are chosen at random to be killed, and the new individuals take over the labels of the individuals who were killed.  

We now give a representation of the genealogy of this population.  For each $s \in \R$ and $t \geq 0$, let $\Pi_n(s,t)$ be the partition of $1, \dots, n$ such that $i$ and $j$ are in the same block of $\Pi_n(s,t)$ if and only if the individuals at time $s$ labeled $i$ and $j$ are descended from the same ancestor immediately before time $s - t$.  We consider the population immediately before time $s-t$, rather than exactly at time $s-t$, to ensure that for each $s \in \R$, the process $(\Pi_n(s,t), t \geq 0)$ is right continuous.  As long as there is no change in the population at time $s$, the partition $\Pi_n(s,0)$ consists of $n$ singletons.  However, if $k$ new individuals are born at time $s$, then $\Pi_n(s,0)$ will consist of one block of size $k+1$ and $n-k-1$ singleton blocks.  

\begin{Prop}\label{boszprop}
Fix $s \in \R$.  Then $(\Pi_n(s,t), t \geq 0)$ is the Bolthausen-Sznitman coalescent started with $n$ blocks.
\end{Prop}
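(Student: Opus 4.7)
The plan is to verify directly that, for fixed $s\in\R$, the backward-in-time process $(\Pi_n(s,t),\,t\geq 0)$ is a continuous-time Markov chain on the partitions of $\{1,\dots,n\}$ whose jump rates coincide with those of the Bolthausen--Sznitman coalescent given by \eqref{lambk}. For fixed $s$, the rate-$(n-1)$ Poisson process of reproduction events has no atom at $s$ almost surely, so the initial state $\Pi_n(s,0)$ is the partition into $n$ singletons, as required.

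The transitions of $\Pi_n(s,\cdot)$ are driven entirely by the reproduction events in $(-\infty,s)$, together with the independent choices of parent, offspring count $\xi$, and killed individuals at each such event. The essential distributional observation is this: at a reproduction event where $\xi=j$, the set of $j+1$ individuals who share a common ancestor immediately before the event (the parent together with its $j$ offspring, the latter occupying the labels of the killed individuals) is uniformly distributed over all $\binom{n}{j+1}$ size-$(j+1)$ subsets of $\{1,\dots,n\}$. This follows because the parent is chosen uniformly from $\{1,\dots,n\}$ and the killed individuals are chosen uniformly from the remaining $n-1$.

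Now suppose that $\Pi_n(s,t-)$ has $b$ blocks, each identified by the distinct label in $\{1,\dots,n\}$ of its ancestor at the current backward time. When we cross the next earlier reproduction event, the blocks whose labels lie in the size-$(j+1)$ ``descendant set'' coalesce into a single block indexed by the parent, while all other blocks are unaffected. Hence the rate at which any specified collection of $k$ out of the $b$ current blocks merges (leaving the remaining $b-k$ unchanged) is
$$r_{b,k}=(n-1)\sum_{j=k-1}^{n-1} P(\xi=j)\,\frac{\binom{n-b}{j+1-k}}{\binom{n}{j+1}},$$
where the ratio of binomials is the probability that a uniformly random $(j+1)$-subset contains the $k$ prescribed labels and avoids the other $b-k$. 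Substituting \eqref{popmodel} the factor $n-1$ cancels, and each summand, after setting $m=j+1-k$, can be rewritten as $\binom{n-b}{m}\int_0^1 x^{m+k-2}(1-x)^{n-m-k}\,dx$ via the Beta integral. Interchanging sum and integral and using $\sum_{m=0}^{n-b}\binom{n-b}{m}(x/(1-x))^m=(1-x)^{-(n-b)}$ collapses the expression to $r_{b,k}=\int_0^1 x^{k-2}(1-x)^{b-k}\,dx=\lambda_{b,k}$.

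Since $r_{b,k}$ depends only on $b$ and $k$ and not on which specific blocks merge, the process is Markov with precisely the Bolthausen--Sznitman transition rates. The main obstacle is the algebraic simplification in the last step; the uniform-distribution observation about the descendant set is what makes the resulting Beta-integral identity fall out cleanly, and it is this observation that tailors the particular offspring distribution \eqref{popmodel} to produce exactly the Bolthausen--Sznitman coalescent.
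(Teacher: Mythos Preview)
Your argument is correct. The hypergeometric probability is right (the $(j+1)$-subset at a reproduction event is independent of which $b$ labels are currently ancestral, since the latter are determined by reproductions strictly after that event), and the Beta-integral collapse of the sum to $\int_0^1 x^{k-2}(1-x)^{b-k}\,dx$ is a clean and valid computation.

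The paper proceeds differently. It only checks the rates at the initial configuration $b=n$, where the total rate of events in which $k-1$ offspring appear is $(n-1)P(\xi=k-1)=n/(k(k-1))=\binom{n}{k}\lambda_{n,k}$; since the $k$ affected labels are exchangeable, this gives the correct rate $\lambda_{n,k}$ for each $k$-merger from $n$ blocks. Rather than redoing the calculation at a generic $b$, the paper then appeals to the exchangeability of the model and the \emph{sampling consistency} of the Bolthausen--Sznitman coalescent to propagate the identification past the first merger. Your route trades that structural shortcut for an explicit computation: you verify $r_{b,k}=\lambda_{b,k}$ for every $b$ directly, so you never need to invoke sampling consistency (which, from the point of view of someone defining the population model first, is a nontrivial external fact). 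The cost is the algebra; the gain is a fully self-contained argument.
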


\begin{proof}
With probability one, there is no change in the population at time $s$, and $\Pi_n(s,0)$ consists of $n$ singletons.
When ancestral lines are followed backwards in time, an event in which $k-1$ individuals are born becomes an event in which $k$ randomly chosen lineages merge, because the ancestral lines of the $k-1$ children merge with that of the parent.  The rate of events in which $k-1$ individuals are born is
\begin{equation}\label{lambform}
(n-1) P(\xi = k-1) = \frac{n}{k(k-1)} = \binom{n}{k} \lambda_{n,k},
\end{equation}
where $\lambda_{n,k}$ comes from (\ref{lambk}).  Therefore, $(\Pi_n(s,t), t \geq 0)$ follows the dynamics of the Bolthausen-Sznitman coalescent up to the time of the first merger.  That the process continues to follow the dynamics of the Bolthausen-Sznitman coalescent after this time is a consequence of the exchangeability of the population model and the sampling consistency of the Bolthausen-Sznitman coalescent.
\end{proof}

For $s \in \R$ and $t \geq 0$, let $N_n(s,t)$ be the number of blocks of the partition $\Pi_n(s,t)$.  Let $$A_n(s) = \inf\{t: N_n(s,t) = 1\},$$ which is the time back to the MRCA of the population and corresponds to the height of the tree ${\cal T}_n(s)$ that represents the genealogy of the population at time $s$.  Let $$L_n(s) = \int_0^{\infty} N_n(s,t) {\bf 1}_{\{N_n(s,t) > 1\}},$$ which is sum of the lengths of all branches in the tree ${\cal T}_n(s)$.

\subsection{Time back to the MRCA}

We consider here how the time back to the MRCA of the population evolves over time.  Proposition 3.4 of \cite{gm05} states that if $Y$ has the exponential distribution with mean 1, then for each fixed $t \in \R$, we have
\begin{equation}\label{Anconv}
A_n(t) - \log \log n \Rightarrow - \log Y,
\end{equation}
where $\Rightarrow$ denotes convergence in distribution as $n \rightarrow \infty$.  We are interested here in finding the limit of the stochastic processes $(A_n(t), t \geq 0)$ as $n \rightarrow \infty$.

We now construct the limit process $(A(t), t \in \R)$ and its time-reversal $(R(t), t \in \R)$ from a Poisson point process.  The construction is similar to constructions in \cite{epw06} and \cite{er10}.  Let $N$ be a Poisson process on $\R^2$ with intensity $\lambda \times \nu$, where $\lambda$ is Lebesgue measure and $\nu(dy) = e^{-y} \: dy$.  Let $M$ consist of the points $\{(t,y): (-t,y) \mbox{ is a point of }N\}$.  Note that $M$ is also a Poisson process on $\R^2$ with the same intensity as $N$.  For each $(t,x) \in \R^2$, define the wedges
\begin{align}
W(t,x) &= \{(s,y) \in \R^2: s \leq t \mbox{ and }y \geq x + t-s\}, \nonumber \\
W'(t,x) &= \{(s,y) \in \R^2: s > t \mbox{ and }y \geq x + s - t\}. \nonumber
\end{align}
Then let
\begin{align}
R(t) &= \sup\{x: \mbox{ there is a point of }N \mbox{ in }W(t,x)\}, \nonumber \\
A(t) &= \sup\{x: \mbox{ there is a point of }M \mbox{ in }W'(t,x)\}. \nonumber
\end{align}
Note that with this construction, both $(R(t), t \in \R)$ and $(A(t), t \in \R)$ are right continuous, and
because of the relationship between $N$ and $M$, we have $A(t) = R(-t)$ for all $t$ such that there is no point in $M$ whose first coordinate is $t$.

The figure below shows how $(R(t), t \in \R)$ and $(A(t), t \in \R)$ are constructed from the Poisson point process.  The process $(R(t), t \in \R)$ decreases linearly at speed one between jumps but jumps up to the level of any point of $N$ that appears above it.  That is, if $(t,y)$ is a point of $N$ and $R(t-) < y$, then $R(t) = y$.  The process $(A(t), t \in \R)$ increases linearly at speed one between jumps.  When the trajectory of the process encounters a point of $M$ at time $t$, the process jumps downward to the highest level $y$ such that there is a point of $M$ on the diagonal half-line starting at $(t,y)$ and extending upward and to the right.

\begin{figure}
\hspace{-1.1in}\includegraphics{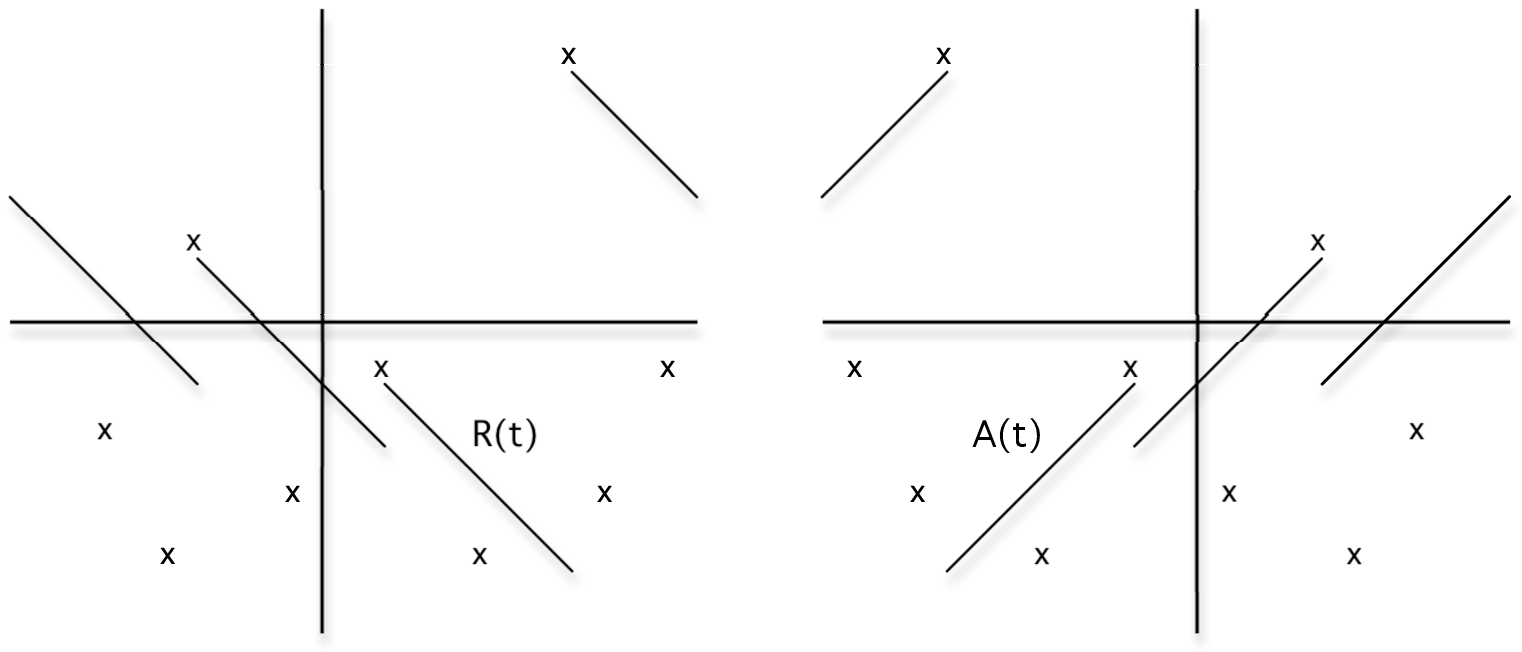}
\caption{}
\vspace{-8.8in}
\vspace{.5in}
\hspace{1.2in} Figure 1: The processes $(R(t), t \in \R)$ and $(A(t), t \in \R)$
\end{figure}

The following theorem is our main result concerning the time back to the MRCA for the evolving Bolthausen-Sznitman coalescent.  This result is proved in Section \ref{MSec} by considering the processes in reversed time and establishing convergence to $(R(t), t \geq 0)$.  Note that unlike in the case of Kingman's coalescent, the limit process is Markov.

\begin{Theo}\label{MRCAthm}
As $n \rightarrow \infty$, the sequence of processes $((A_n(t) - \log \log n), t \geq 0)$ converges in the Skorohod topology to $(A(t), t \geq 0)$.
\end{Theo}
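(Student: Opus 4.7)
My plan is to exploit the stationarity of the population model and prove convergence in reversed time. Let $R_n(t) = A_n((-t)^-)$ be a right-continuous modification of the time-reversal of $A_n$; by stationarity this is well-defined on all of $\R$. Since $A(t) = R(-t)$ almost surely for every $t$ at which there is no point of $M$ with first coordinate $t$, Skorohod convergence of $(R_n(t) - \log\log n, t \in [-T, 0])$ to $(R(t), t \in [-T, 0])$ will imply, via the time-reversal homeomorphism of Skorohod space, the desired convergence of $(A_n(t) - \log\log n, t \in [0, T])$ to $(A(t), t \in [0, T])$. The advantage of reversing time is that $R$ is driven \emph{forward} by independent increments of the Poisson process $N$: between jumps $R$ decreases linearly at unit speed, and it jumps upward to points of $N$ lying above its current trajectory. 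The finite-$n$ analogue $R_n$ has the same description, with upward jumps at instants $-\tau$ (where $\tau$ is a forward birth event that resets the MRCA) and post-jump value $A_n(\tau)$.

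The central task is then to prove that the marked point process of upward jumps of $R_n$, namely $\{(-\tau, A_n(\tau) - \log\log n) : \tau \text{ is an MRCA-reset time}\}$, converges in distribution to a Poisson process on $\R^2$ with intensity $e^{-y}\,dy\,dt$, restricted to points above the current trajectory. Because the map from the driving Poisson point process to the trajectory of $R$ constructed from the wedges $W(t,x)$ is continuous in the Skorohod topology at almost every realization, point-process convergence will transfer to convergence of the trajectories. To identify the intensity, I would combine the birth-rate formula $(n-1) P(\xi = k-1) = n/(k(k-1))$ from (\ref{lambform}) with the sampling consistency established in Proposition \ref{boszprop}: a birth event at time $\tau$ with $\xi$ offspring causes an MRCA reset precisely when the $\xi$ killed individuals wipe out all but one of the old root's child subtrees, and in that case $A_n(\tau)$ equals the depth from $\tau$ of the MRCA of the remaining $n - \xi$ distinct lineages (parent plus unaffected individuals). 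The marginal asymptotic $A_n(t) - \log\log n \Rightarrow -\log Y$ from (\ref{Anconv}) already pins down the correct intensity: the rate of upward jumps of $R_n$ above level $y + \log\log n$ per unit time should converge to $e^{-y}$, matching $\nu$.

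The hardest step is this height analysis. Only birth events with $\xi$ of order $n/\log n$ or larger can produce post-jump heights of order $\log\log n + O(1)$; smaller events either fail to reset the MRCA or produce heights tending to $-\infty$ after centering. Two things need to be shown: (i) the contribution of small events to upward jumps above any fixed centered height $c$ is asymptotically negligible, and (ii) for the relevant large events, the centered post-jump height $A_n(\tau) - \log\log n$ is asymptotically distributed with the exponential right-tail matching $\nu(dy) = e^{-y}dy$, uniformly in the stationary structure just before the event. Both rely on a careful analysis of the joint structure of the residual Bolthausen-Sznitman coalescent above the merger, exploiting its Poisson point process representation. Tightness in the Skorohod topology will then follow from bounding, uniformly in $n$, the expected number of upward jumps of $R_n$ above level $c + \log\log n$ in a unit time interval, which should converge to $e^{-c}$ by the same intensity calculation.
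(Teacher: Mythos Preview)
Your time-reversal reduction to the process $(R_n(t), t \geq 0)$ is exactly what the paper does, and the target description of the limit $R$ as a piecewise-linear process driven by an explicit Poisson point process is correct. But from that point on your route diverges from the paper's, and the divergence creates a real gap.

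The paper's engine is the Goldschmidt--Martin random recursive tree construction, which you do not invoke. In that picture the reversed-time dynamics are completely explicit: the time to the MRCA is the maximum of the exponential labels on the edges adjacent to the root, and each time a subtree of $k$ vertices is cut, $k$ new vertices are reattached one at a time uniformly at random, each receiving a fresh exponential label. An upward jump of $R_n$ above level $z$ is then exactly the event ``a new vertex attaches to the root and its label exceeds $\log\log n + z$''. This makes the rate calculation elementary: blocks are lost at rate $\approx n\log n$, each new vertex hits the root with probability $\approx 1/n$, and the label exceeds $\log\log n + z$ with probability $e^{-z}/\log n$, yielding rate $e^{-z}$. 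With this representation the paper proves f.d.d.\ convergence by a generator argument (Ethier--Kurtz Chapter~4, Theorem~8.2) and then checks tightness directly.

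Your proposal instead tries to analyse MRCA resets directly in terms of forward-time birth events: which $\xi$-events wipe out one of the two oldest families, and what the post-reset height is. This is much harder, because it requires control of the joint law of the oldest-family sizes and the residual genealogy, uniformly over the stationary dynamics. Your assertion that only $\xi$ of order $n/\log n$ or larger contribute is not justified and is in tension with the recursive-tree picture, where the dominant contribution to upward jumps comes from \emph{small} cuts (the paper's Lemma~11 shows this; large cuts are handled as error terms in Lemma~10). Likewise the claim that small events ``produce heights tending to $-\infty$ after centering'' is unclear: once a reset occurs, the new height is the MRCA time of $n-\xi$ individuals, which is $\log\log(n-\xi) = \log\log n + o(1)$ for any $\xi = o(n)$. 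Finally, your continuity argument from point-process convergence to trajectory convergence would need care, since the jump times of $R_n$ depend on the current genealogical state in a non-trivial way; you would need to show this dependence is asymptotically negligible, which is precisely what the recursive-tree computation accomplishes.

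In short: the strategy is sound at the level of ``reverse time and compare to $R$'', but you are missing the tool (random recursive trees) that makes the jump-rate identification tractable, and your proposed substitute analysis via forward birth events has unjustified claims and would require substantial additional machinery.
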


\begin{Rmk}
{\em For each fixed $t \geq 0$, the event that $R(t) \leq y$ differs from the event that there is no point of $N$ in $W(t,y)$ only on a set of probability zero.  The probability that there is no point of $N$ in $W(t,y)$ is
\begin{equation}\label{statprob}
\exp \bigg(-\int_{-\infty}^t \int_{y+t-s}^{\infty} e^{-x} \: dx \: ds \bigg) = \exp(-e^{-y}).
\end{equation}
If $Y$ has the exponential distribution with mean one, then $P(-\log Y \leq y) = P(Y \geq e^{-y}) = \exp(-e^{-y})$.  Thus, we see from the above construction that the processes $(R(t), t \in \R)$ and $(A(t), t \in \R)$ are stationary processes whose stationary distribution is the same as the distribution of $-\log Y$.  This result also follows from Theorem \ref{MRCAthm} and equation (\ref{Anconv}).}
\end{Rmk}

\begin{Rmk}
{\em The processes $(R(t), t \in \R)$ and $(A(t), t \in \R)$ are both examples of piecewise deterministic Markov processes, a class of processes whose theory was developed by Davis \cite{dav1, dav2}.  These processes are characterized by their deterministic behavior between jump times, which is linear drift for the processes $(R(t), t \in \R)$ and $(A(t), t \in \R)$, and their jump rates.  The jump rates for $(R(t), t \in \R)$ can easily be read from the Poisson process $N$.  The process $(R(t), t \in \R)$ jumps away from $x$ at rate $e^{-x}$, and when it jumps away from $x$, the distribution of the location to which it jumps has density $e^{x-y} {\bf 1}_{\{y \geq x\}}$.  This means that the rate of jumps from $x$ to $y$ is given by $q(x,y) = e^{-y} {\bf 1}_{\{y \geq x\}}$.

To obtain the jump rates for $(A(t), t \in \R)$, note that if $A(t) = x$ then there is a point of $M$ at $(t+s, x+s)$ for some $s \geq 0$ but no points above this diagonal line.  Because the density of the intensity measure of $M$ at $(t+s, x+s)$ is $e^{-(x+s)}$, we see that conditional on $A(t) = x$, the distribution of the time before the next jump is exponential with mean $1$.  That is, for all $x \in \R$, the process $(A(t), t \in \R)$ jumps away from $x$ at rate one, which implies that the jump times of $(A(t), t \in \R)$ form a homogeneous Poisson process of rate one on $\R$.  If the process jumps away from $x$ at time $t$, then the probability that it jumps below $y$ is the probability that there is no point of $M$ in the trapezoidal region $\{(s,z): s \geq t \mbox{ and } y + s - t \leq z \leq x + s - t\}$, which is $$\exp \bigg( -\int_t^{\infty} \int_{y+s-t}^{x+s-t} e^{-z} \: dz \: ds \bigg) = \exp(e^{-x} - e^{-y}).$$  Differentiating with respect to $y$, we see that the rate of jumps from $x$ to $y$ is given by $r(x,y) = \exp(e^{-x} - e^{-y} - y) {\bf 1}_{\{y \leq x\}}$.

As a check on these formulas, let $\pi(y) = e^{-y} e^{-e^{-y}}$ for $y \in \R$, which is the density of the stationary distribution, obtained by differentiating the right-hand side of (\ref{statprob}).  Then note that $\pi(x) q(x,y) = \pi(y) r(y,x)$ for all $x,y \in \R$, as expected given that $(R(t), t \in \R)$ and $(A(t), t \in \R)$ are related by time reversal.
}
\end{Rmk}

\subsection{Total branch length}

Theorem 5.2 of \cite{dimr07} establishes that for each fixed $t \in \R$, 
\begin{equation}\label{dimr}
\frac{(\log n)^2}{n} \bigg(L_n(t) - \frac{n}{\log n} - \frac{n \log \log n}{(\log n)^2} \bigg) \Rightarrow X,
\end{equation}
where, using $\gamma$ to denote Euler's constant,
\begin{align}\label{chfX}
E[e^{iuX}] &= \exp \bigg( - \frac{\pi}{2} |u| + i u \log |u| \bigg) \nonumber \\
&= \exp \bigg(iu(1 - \gamma) - \int_{-\infty}^0 \big(1 - e^{iux} + iux {\bf 1}_{\{|x| \leq 1\}} \big) x^{-2} \: dx \bigg).
\end{align}

We consider here the stochastic process $L_n = (L_n(t), t \geq 0)$.  If there are no changes in the population between times $t$ and $t + s$, then the tree ${\cal T}_n(t+s)$ is obtained by starting with the tree ${\cal T}_n(s)$ and then adding a segment of length $s$ to each of the $n$ branches.  Consequently, the process $L_n$ increases at speed $n$ between jumps.  However, if $k$ individuals die at time $t$, then the tree ${\cal T}_n(t)$ is obtained from the tree ${\cal T}_n(t-)$ by removing $k$ of the branches, causing a downward jump in the process $L_n$.  Our main result concerning the dynamics of the total branch length for the Bolthausen-Sznitman coalescent is that the processes $(L_n(t), t \geq 0)$, properly centered and scaled, converge to a stable process of Ornstein-Uhlenbeck type.

We now review some facts about processes of Ornstein-Uhlenbeck type, which can be found in Chapter 17 of \cite{sato}.
Suppose $(Z(t), t \geq 0)$ is a L\'evy process such that $$E[e^{iuZ(t)}] = \exp \bigg(iaut - \frac{bu^2t}{2} - t \int_{-\infty}^{\infty} \big(1 - e^{iux} + iux {\bf 1}_{\{|x| \leq 1\}}\big) \: \nu(dx) \bigg).$$  Given $c > 0$ and a random variable $X(0)$, there is a unique process $(X(t), t \geq 0)$ having paths that are almost surely right continuous with left limits such that
\begin{equation}\label{OUSDE}
X(t) = X(0) + Z(t) - c \int_0^t X(s) \: ds
\end{equation}
almost surely.  Following \cite{sato}, we call $(X(t), t \geq 0)$ the process of Ornstein-Uhlenbeck type generated by $(a, b, \nu, c)$.
As long as $$\int_{|x| > 2} \log |x| \: \nu(dx) < \infty,$$ the process $(X(t), t \geq 0)$ has a unique stationary distribution $\mu$, with characteristic function
\begin{equation}\label{chfmu}
\int_{-\infty}^{\infty} e^{iux} \mu(dx) = \exp \bigg(i \alpha u - \frac{\beta u^2}{2} - \int_{-\infty}^{\infty} \big(1 - e^{iux} + iux {\bf 1}_{\{|x| \leq 1\}} \big) \: \rho(dx) \bigg),
\end{equation}
where $\beta = b/2c$, $\alpha = (a + \nu((1, \infty)) - \nu((-\infty, -1)))/c$, and for all Borel sets $B$, $$\rho(B) = \frac{1}{c} \int_{-\infty}^{\infty} \int_0^{\infty} {\bf 1}_B(e^{-s} y) \: ds \: \nu(dy)$$ (see Theorem 17.5 of \cite{sato}).  In this case, if $X(0)$ has distribution $\mu$, then the process $(X(t), t \geq 0)$ is stationary.

\begin{Theo}\label{lengththm}
Let $\nu$ be the measure on $\R$ whose density with respect to Lebesgue measure is given by $x^{-2} {\bf 1}_{(-\infty, 0)}(x)$.  Let $(L(t), t \geq 0)$ be a stationary process of Ornstein-Uhlenbeck type generated by $(2 - \gamma, 0, \nu, 1)$.  As $n \rightarrow \infty$, the sequence of processes $$\bigg(\frac{(\log n)^2}{n} \bigg(L_n\bigg( \frac{t}{\log n} \bigg) - \frac{n}{\log n} - \frac{n \log \log n}{(\log n)^2} \bigg), t \geq 0 \bigg)$$ converges in the Skorohod topology to $(L(t), t \geq 0).$
\end{Theo}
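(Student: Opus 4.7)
My plan is to write the rescaled process
$$\tilde L_n(t) := \frac{(\log n)^2}{n}\biggl(L_n\Bigl(\frac{t}{\log n}\Bigr) - d_n\biggr), \qquad d_n = \frac{n}{\log n} + \frac{n\log\log n}{(\log n)^2},$$
as a semimartingale and show that its predictable characteristics converge to those of the limit OU-type process $L$, concluding Skorohod convergence via a standard theorem (e.g., Theorem IX.4.15 of Jacod and Shiryaev, or an Ethier--Kurtz generator result applied to the Markov pair $(L_n(\cdot), {\cal T}_n(\cdot))$). Initial-distribution convergence follows from~\eqref{dimr}, together with the check using~\eqref{chfmu} that the stationary law of $L$ equals the law of $X$ in~\eqref{chfX} (one can verify that the L\'evy measure $\rho$ of the stationary distribution coincides with $\nu$ and that $\alpha = 1-\gamma$). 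From the dynamics of Section~1.1, between population-change events $L_n$ grows linearly at speed $n$, while at an event with $\xi = k$ offspring $L_n$ jumps down by $B_n$, the total length of the branches removed from ${\cal T}_n(t-)$ to form ${\cal T}_n(t)$. By Proposition~\ref{boszprop} and sampling consistency of the Bolthausen-Sznitman coalescent, the restricted tree on the $n-k$ survivors is again Bolthausen-Sznitman, whence $E[B_n \mid \xi = k] = E[L_n] - E[L_{n-k}] \approx k/\log n$ using the asymptotic $E[L_m] \sim m/\log m + m\log\log m/(\log m)^2$ from~\cite{dimr07}; forward-time stationarity of the population model then forces $E[L_n(t)] = d_n + o(n/(\log n)^2)$, so $d_n$ is the correct centering.

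For the jump measure, a rescaled jump of $\tilde L_n$ of size $-z$ corresponds to $\xi \approx nz/\log n$; combining $P(\xi = k) \sim 1/k^2$ from~\eqref{popmodel}, the rescaled event rate $(n-1)/\log n$ per unit $t$, and the change of variables $k \mapsto z$ produces limiting intensity $z^{-2}\,dz$ for downward jumps, matching $\nu(dx) = x^{-2} \mathbf 1_{(-\infty, 0)}(x)\,dx$. For the drift, the between-event growth rate $\log n$ in rescaled time cancels the leading-order mean jump compensator because of stationarity; at next order, fluctuations of the conditional compensator $g({\cal T}_n(t-)) := E[B_n \mid {\cal F}_{t-}, \text{event}]$ about its mean $n/(n-1)$ should produce the linear restoring term $-\tilde L_n\,dt$ (so $c=1$ in the OU-type parametrization), and the constant $2-\gamma$ in the L\'evy triplet of $Z$ falls out of expanding $H_n = \log n + \gamma + O(1/n)$ against the Campbell-type truncation convention in~\eqref{chfmu} and the second-order term of $d_n$.

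The central technical obstacle is identifying the linear restoring coefficient as exactly $1$: a priori, $g(T)$ depends on the full branch-length profile of $T$ through the weights $\binom{n-m_e}{k-m_e}/\binom{n-1}{k}$ assigned to each branch $e$ with $m_e$ descendants, rather than just on $L_n(T)$. Showing that the fluctuation of $g(T)$ is governed by $L_n(T) - d_n$ with the right coefficient requires quantitative moment and concentration estimates on the sampling restriction $L_{n-k}^{\mathrm{restr}}(T)$ of the Bolthausen-Sznitman coalescent tree, which I expect can be obtained by sharpening the generating-function and martingale arguments of~\cite{dimr07, gm05}, exploiting sampling consistency crucially. Once the characteristics converge, tightness in the Skorohod topology follows from Aldous's criterion applied to the semimartingale decomposition (with jump tails controlled by $\nu$ and deterministic between-jump growth uniformly bounded), and the cited convergence theorem then yields the claimed Skorohod convergence.
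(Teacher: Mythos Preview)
Your approach is genuinely different from the paper's, and the gap you yourself flag is real and unresolved.

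The paper does \emph{not} analyze $\tilde L_n$ as a semimartingale in its own filtration. Instead it uses the integral representation $L_n(s)=\int_0^\infty N_n(s,t)\mathbf 1_{\{N_n(s,t)>1\}}\,dt$ and a Poisson-point-process coupling that simultaneously builds the population model and a family of stable processes $(S(s,t),t\ge 0)$. The bulk of the work (Section~3) is Theorem~\ref{numblocksthm}: a uniform-in-$s$ bound showing that the rescaled block count $X_n(s,t)$ tracks $Y(s,t)=e^{-t}S(s,t)+e^{-t}t^2/2$ on $[0,T_n+s]$ with $T_n=2\log\log n$. Section~4 then integrates in $t$, controls tails beyond $T_n$, and checks that $L(s)=\int_0^\infty Y(s,t)\,dt$ solves the OU-type SDE with the stated parameters. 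The non-Markovianity of $L_n$ never has to be confronted, because the coupling pins down the entire tree profile $t\mapsto N_n(s,t)$, not just its integral.

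Your route, by contrast, requires the predictable characteristics of $\tilde L_n$---which depend on the full tree $\mathcal T_n(t-)$ through the conditional law of the removed length $B_n$---to collapse asymptotically to functions of $\tilde L_n(t-)$ alone. You correctly isolate this as ``the central technical obstacle'' for the drift, but the same issue afflicts the jump compensator: given $\xi=k$ and $\mathcal T_n(t-)=T$, the removed length $B_n$ is not close to $k/\log n$ uniformly over trees $T$ in the support, so the identification of the limiting jump intensity as $z^{-2}\,dz$ already presupposes a concentration result for $B_n\mid(\xi,T)$ that you have not supplied. Saying you ``expect'' these estimates follow by sharpening \cite{dimr07,gm05} is not a proof; those references give marginal asymptotics for $L_m$, not conditional control of $L_{n-k}^{\mathrm{restr}}(T)$ given $T$, and sampling consistency only gives you the \emph{marginal} law of the restriction, not its conditional behavior. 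Until you produce a statement of the form ``$E[B_n\mid \xi=k,\mathcal T_n(t-)]=k/\log n + c\,(L_n(t-)-d_n)\cdot k/n + o(\cdot)$ with the right $c$'' and a matching variance bound, the characteristics program does not close, and Jacod--Shiryaev~IX.4.15 cannot be invoked.
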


\begin{Rmk}
{\em The stationary distribution of $(L(t), t \geq 0)$ has characteristic function given by (\ref{chfmu}) with $\alpha = 1 - \gamma$, $\beta =  0$, and $\rho = \nu$, which matches the right-hand side of (\ref{chfX}).  To see this, observe that $\nu((1, \infty)) = 0$ and $\nu((-\infty, 1)) = 1$, so $\alpha = (2 - \gamma) - 1 = 1 - \gamma$.  Also, for all $z > 0$, $$\rho((-\infty, -z]) = \int_{-\infty}^0 \int_0^{\infty} {\bf 1}_{(-\infty, -z]}(e^{-s} y) y^{-2} \: ds \: dy = \int_0^{\infty} \int_{-\infty}^{-ze^s} y^{-2} \: dy \: ds = \frac{1}{z} = \nu((-\infty, -z]).$$  Thus, the convergence implied by Theorem \ref{lengththm} for each fixed $t$ is consistent with the result (\ref{dimr}).}
\end{Rmk}

\subsection{Number of blocks}

The techniques used to establish Theorem \ref{lengththm} can also be used to prove a new result about how the number of blocks of the Bolthausen-Sznitman coalescent changes over time.  Because the number of blocks for other coalescents with multiple mergers has been studied in some depth (see, for example, \cite{bbl, bbs1}), we believe that this result may be of independent interest.

\begin{Theo}\label{numblocksthm}
Let $(\Pi_n(t), t \geq 0)$ be a Bolthausen-Sznitman coalescent started with $n$ blocks.  Let $N_n(t)$ be the number of blocks of $\Pi_n(t)$, and let $$X_n(t) = \frac{\log n}{n} \bigg( N_n \bigg( \frac{t}{\log n} \bigg) - n e^{-t} - \frac{n t e^{-t} \log \log n}{\log n} \bigg).$$  Let $(S(t), t \geq 0)$ be a stable L\'evy process satisfying
$$E[e^{iuS(t)}] = \exp \bigg( - \frac{\pi t}{2} |u| + itu \log |u| \bigg).$$  As $n \rightarrow \infty$, the sequence of processes $(X_n(t), t \geq 0)$ converges in the Skorohod topology to $$\bigg( e^{-t} S(t) + \frac{e^{-t} t^2}{2}, t \geq 0 \bigg).$$
\end{Theo}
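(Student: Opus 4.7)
My plan is to realize $X_n$ as a semimartingale, verify that its predictable characteristics (in the sense of Jacod and Shiryaev, Chapter II) converge to those of $Y(t)=e^{-t}S(t)+e^{-t}t^2/2$, and conclude via the standard convergence theorem for semimartingales (Theorem IX.3.21 of Jacod--Shiryaev), in direct parallel with the intended proof of Theorem~\ref{lengththm}.  After the time change $\tilde N_n(t):=N_n(t/\log n)$, the rate computation $\binom{b}{k}\lambda_{b,k}=b/[k(k-1)]$ from (\ref{lambform}) shows that $\tilde N_n$ jumps from $b$ to $b-j$ at rate $b/[j(j+1)\log n]$ for $1\le j\le b-1$.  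In the $X_n$ coordinates, this translates into a jump measure with intensity $\nu_n^X(t,dy)$ supported on $\{y=-j\log n/n:1\le j\le \tilde N_n(t-)-1\}$, plus a deterministic drift $e^{-t}\log n+(t-1)e^{-t}\log\log n$ arising from differentiating the centering in the definition of $X_n$.

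For the jump measure, the change of variables $y=-j\log n/n$ converts $\nu_n^X(t,dy)$ into the approximate density $(\tilde N_n(t)/n)y^{-2}\,dy$ on $(-\infty,0)$, and since $\tilde N_n(t)/n\to e^{-t}$ (a standard consequence of existing results on the Bolthausen-Sznitman coalescent, such as those of Drmota, Iksanov, M\"ohle, and R\"osler), this converges to $e^{-t}y^{-2}\mathbf{1}_{\{y<0\}}\,dy$, which is exactly the time-$t$ L\'evy measure of the jumps of $e^{-t}\,dS(t)$.  For the truncated drift with cutoff $|y|\le 1$, I compute $\int_{|y|\le 1}y\,\nu_n^X(t,dy)=-(\tilde N_n(t)/n)\sum_{j=1}^{\lfloor n/\log n\rfloor}(j+1)^{-1}$ and expand using $H_m=\log m+\gamma+o(1)$ together with $\tilde N_n(t)=ne^{-t}+nte^{-t}\log\log n/\log n+nX_n(t)/\log n$; all divergent $\log n$ and $\log\log n$ terms then cancel against the centering derivative, yielding a truncated drift of $-X_n(t)+(1-\gamma)e^{-t}+o(1)$.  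A parallel direct computation on $Y$, using the L\'evy--It\^o decomposition $S(t)=(1-\gamma)t+\hat S(t)$ (the drift $1-\gamma$ being read off from the characteristic exponent via the identity $\int_0^\infty (-\sin y+y\mathbf{1}_{\{y\le 1\}})y^{-2}\,dy=\gamma-1$) and the jump substitution $w=e^{-s}x$, shows that the truncated drift of $Y$ is exactly $-Y(t)+(1-\gamma)e^{-t}$: the extra $e^{-t}t$ appearing in $d(e^{-t}t^2/2)$ is cancelled by the compensator $\int_{e^{-s}<|w|\le 1}w\,\nu_Y(s,dw)=-se^{-s}$ that must be added to match the truncation levels between $S$ and $Y$.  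Hence the characteristics of $X_n$ and $Y$ agree in the limit.

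Tightness of $(X_n)$ in the Skorohod space of c\`adl\`ag paths on $[0,T]$ follows from Theorem VI.4.18 of Jacod--Shiryaev, which reduces to verifying that $\int(|y|^2\wedge 1)\nu_n^X(t,dy)$ is uniformly bounded on compact time intervals, a direct calculation.  Combined with the convergence of characteristics established above, this gives $X_n\Rightarrow Y$ in the Skorohod topology.  The main technical obstacle is the drift computation: the $\log\log n$ centering in the definition of $X_n$ has been precisely tuned so that the divergent $\log n$ contribution from the small-jump compensator and the subleading $\log\log n$ contribution both cancel exactly against the centering derivative, and verifying this uniformly on compact time intervals requires an a priori moment bound on $X_n$ (derivable from the predictable bracket of the compensated-small-jump martingale) to control $\tilde N_n(t)/n-e^{-t}$ to order $1/\log n$.
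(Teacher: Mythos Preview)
Your route via semimartingale characteristics is genuinely different from the paper's.  The paper does not compute characteristics and invoke a general limit theorem; instead it builds an explicit coupling, realizing both $N_n$ and a cutoff approximation $S_n$ of $S$ on a common Poisson point process on $\R\times(0,\infty)$ with intensity $dt\,y^{-2}dy$, and proves $\sup_{0\le t\le T_n}|X_n(t)-Y_n(t)|\to 0$ in probability directly, by matching jump sizes at the large atoms of the Poisson process and controlling the between-jump drift via a compensator martingale.  This yields the stronger statement of convergence in probability on a common space, uniformly up to time $T_n=2\log\log n$ rather than a fixed $T$, which the paper then reuses as input for Theorem~\ref{lengththm}; your approach gives only weak convergence on compacts.

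Your drift and jump-measure computations are correct and isolate the same cancellation the paper exploits.  Two technical points need care, however.  First, the limit $Y$ is not a PII: its drift $-Y(t)+(1-\gamma)e^{-t}$ depends on $Y$ itself, so a limit theorem that compares the characteristics of $X_n$ to those of a \emph{given} limiting semimartingale is circular here (matching the drifts already presupposes $X_n\Rightarrow Y$).  You should either use the martingale-problem framework, exploiting that the linear SDE $dY=(-Y+te^{-t})\,dt+e^{-t}\,dS$ has a unique solution, or---cleaner---pass to $Z_n(t):=e^tX_n(t)-t^2/2$, whose limiting characteristics are genuinely deterministic (drift $1-\gamma$, L\'evy measure $z^{-2}\mathbf 1_{\{z<0\}}\,dz$), establish $Z_n\Rightarrow S$ via the PII criterion, and finish by the continuous map $f\mapsto (t\mapsto e^{-t}f(t)+e^{-t}t^2/2)$ on Skorohod space.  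Second, the $o(1)$ in your drift contains a term of order $|X_n|\log\log n/\log n$, hence is only small after localization; the tightness criterion you quote does not by itself supply an a priori bound, because the drift characteristic you would need to bound already involves $|X_n|$.  Stopping at $\tau_n^K=\inf\{t:|X_n(t)|>K\}$, proving convergence of the stopped processes, and then letting $K\to\infty$ using tightness of $\sup_{t\le T}|Y(t)|$ is the standard fix, but it should be made explicit.
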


\section{Proof of Theorem \ref{MRCAthm}}\label{MSec}

\subsection{Construction from random recursive trees}\label{constrrt}

Our proof of Theorem \ref{MRCAthm} makes use of a connection between the Bolthausen-Sznitman coalescent and random recursive trees that was discovered by Goldschmidt and Martin \cite{gm05}.  Suppose $\ell_1, \dots, \ell_n$ are disjoint subsets of $\N$ such that if $1 \leq i < j \leq n$, then the smallest element of $\ell_i$ is less than the smallest element of $\ell_j$.  A random recursive tree with vertices labeled $\ell_1, \dots, \ell_n$ can be constructed inductively as follows.  The vertex labeled $\ell_1$ is the root.  For $k \geq 2$, once the vertices labeled $\ell_1, \dots, \ell_{k-1}$ have been placed in the tree, the vertex labeled $\ell_k$ is attached to a vertex chosen uniformly at random from those labeled $1, \dots, \ell_{k-1}$.

Suppose $e$ is an edge in the tree connecting vertices $x$ and $y$, where $x$ is closer to the root than $y$.  We can cut the tree at the edge $e$ by deleting the edge from the tree as well as the entire subtree below $e$.  That is, we remove all vertices $z$ such that the shortest path from the root to $z$ goes through $x$.  All integers that are in labels of vertices that are removed from the tree are then added to the label of $x$.  When a random recursive tree is cut at a randomly chosen edge, the remaining tree is a random recursive tree on the new set of labels (see Proposition 2.1 of \cite{gm05}).

To establish the connection with the Bolthausen-Sznitman coalescent, start with a random recursive tree on $n$ vertices, labeled with the integers $1, \dots, n$.  Then to each edge, add an independent exponential random variable with mean $1$, whose value gives the time at which the edge is cut.  For all $t \geq 0$, let $\Pi_n(t)$ denote the partition of $\{1, \dots, n\}$ such that $i$ and $j$ are in the same block of $\Pi_n(t)$ if and only if the integers $i$ and $j$ are in the same vertex label at time $t$.  Then $(\Pi_n(t), t \geq 0)$ is the Bolthausen-Sznitman coalescent started with $n$ blocks (see Proposition 2.2 of \cite{gm05}).  Because the last transition always involves deleting an edge adjacent to the root, the time back to the MRCA for this Bolthausen-Sznitman coalescent is the maximum of the exponential random variables assigned to the edges adjacent to the root.  In \cite{gm05}, Goldschmidt and Martin used this fact to prove (\ref{Anconv}).

We now use recursive trees to construct the evolving Bolthausen-Sznitman coalescent in reversed time.  Note that the dynamics of an evolving Bolthausen-Sznitman coalescent in reversed time are the same as the dynamics of an ordinary Bolthausen-Sznitman coalescent, except that whenever $k$ lineages are lost due to a merger, these lineages are replaced by $k$ new lineages.  For the purposes of studying the time back to the MRCA, the labeling of the lineages by the integers $1, \dots, n$ is unimportant, so we will use a different vertex labeling scheme in the recursive tree construction.

To carry out this construction, begin with a random recursive tree having $n$ vertices constructed as above, and give every vertex the label zero.  Add an independent exponential random variable with mean $1$ to each edge to obtain the tree at time zero.  The process evolves in time as follows.  The edge labels decrease linearly at speed one.  When an edge label hits zero at, say, time $t$, this edge is cut from the tree and all vertices below the edge are removed.  If this cut removes $k$ vertices, then we replace these vertices by adding $k$ new vertices to the tree, one at a time, to randomly chosen vertices of the existing tree.  The $k$ new edges are assigned independent exponential random variables.  The $k$ new vertices are given the label $t$, corresponding to the time when they are added to the tree.  Then the process continues to evolve according to the same rules.

By the result of Goldschmidt and Martin, we know that this process follows the dynamics of the Bolthausen-Sznitman coalescent up to the time of the first merger, in the sense that the time to the first transition is exponential with rate $n - 1 = \lambda_n$ and the probability that the first transition eliminates $k$ vertices is $\lambda_{n,k+1}/\lambda_n$.  To see that the same dynamics continue after the first merger, note that if the first cut causes $k$ vertices to be removed, the remaining tree has the shape of a random recursive tree on $n-k$ vertices, while the edge lengths remain exponential random variables with mean $1$ by the memoryless property of the exponential distribution.  Consequently, when $k$ more vertices are added according to the recursive procedure, the resulting tree has the shape of a random recursive tree on $n$ vertices, and all of the random variables attached to the edges have the exponential distribution with mean $1$.  Thus, this process follows the same dynamics as the population process followed backwards in time, with each set of $k$ lineages merging at rate $\lambda_{n,k}$.

Let $M_n(t)$ denote the maximum of the exponential random variables assigned to the edges that are adjacent to the root at time $t$.  Then $t + M_n(t)$ is the first time at which every vertex other than the root has a label greater than $t$, and $M_n(t)$ corresponds to the time back to the MRCA of the population at time $-t$.  Let $R_n(t) = M_n(t) - \log \log n$.  Then, we see that $(R_n(t), t \geq 0)$ has the same finite-dimensional distributions as $(A_n(-t) - \log \log n, t \geq 0)$, and the two processes would have the same law if the process $(A_n(-t), t \geq 0)$ were modified at the jump times to make it right-continuous rather than left-continuous.  Consequently, in view of the stationarity of the population process, to prove Theorem \ref{MRCAthm}, it suffices to show that the processes $(R_n(t), t \geq 0)$ converge in the Skorohod topology to $(R(t), t \geq 0)$, and it is this result that we will show.

\subsection{A heuristic argument}

To understand heuristically why Theorem \ref{MRCAthm} is true, note that if $R_n(t) \leq z$, then the process $R_n$ jumps above $z$ only when a new vertex is attached to the root, and the random variable assigned to the new edge is greater than $\log \log n + z$.  Because the number of blocks in the Bolthausen-Sznitman coalescent decreases by $k-1$ whenever $k$ blocks merge into one, the rate at which blocks are being lost, and thus new vertices are being added to the tree, is $$\gamma_n = \sum_{k=2}^n (k-1) \binom{n}{k} \lambda_{n,k} = \sum_{k=2}^n \frac{n}{k} \approx n \log n.$$  As long as not too many vertices are cut away from the tree at once, the probability that a new vertex attaches to the root is approximately $1/n$.  The probability that the exponential random variable assigned to the new edge is greater than $\log \log n + z$ is $e^{-z}/\log n$.  Hence, the rate at which the process $R_n$ jumps above $z$ is approximately $$(n \log n) \bigg( \frac{1}{n} \bigg) \bigg( \frac{e^{-z}}{\log n} \bigg) = e^{-z},$$ in agreement with the dynamics of the process $(R(t), t \geq 0)$.  The rest of the proof consists of making these ideas rigorous.

\subsection{Lemmas pertaining to random recursive trees}

We prove here two lemmas related to random recursive trees that will be used later in the proof of Theorem \ref{MRCAthm}.

\begin{Lemma}\label{jumprate}
For all $\eps > 0$ and $z \in \R$, the probability that there exists $u \in [0, \eps]$ such that $R_n(u) \neq R_n(u-)$ and $R_n(u) > z$ is at most $\eps (1 + 2 e^{-z}).$
\end{Lemma}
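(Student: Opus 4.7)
The plan is to bound the desired probability by the expected number of jumps $u \in (0, \eps]$ of $R_n$ at which $R_n(u) > z$. By the construction in subsection 2.1, the process is stationary: at each time the tree is a random recursive tree on $n$ vertices, and by the memoryless property of the exponential the surviving edge labels remain i.i.d.\ exponential(1) given the tree shape. It therefore suffices to show that the stationary rate of jumps satisfying $R_n(u) > z$ is at most $1 + 2 e^{-z}$.

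To bound this rate, I would classify every jump of $R_n$ according to which of two possible mechanisms produced it at the underlying cut event: (i) the root-adjacent edge currently achieving $M_n(u-) = R_n(u-) + \log \log n$ is cut, or (ii) some freshly-attached new vertex at the root receives an exponential label exceeding $M_n(u-)$. Every jump of $R_n$ involves at least one of these. Let $J_1$ count jumps produced by (i) alone and $J_2$ count jumps where (ii) occurs (perhaps together with (i)); then the number of jumps with $R_n(u) > z$ in $(0,\eps]$ is at most $J_1 + J_2$.

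For $J_1$: when only mechanism (i) acts, every root-adjacent edge at time $u$ has label at most $M_n(u-)$, so $R_n(u) \leq R_n(u-)$, forcing $R_n(u-) > z$ once $R_n(u) > z$ is imposed. Since the max edge is cut at rate exactly $1$ by memorylessness, the stationary rate of such events is at most $P(R_n > z) \leq 1$, giving $E[J_1] \leq \eps$.

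For $J_2$: if (ii) occurs and $R_n(u) > z$, the new edge responsible must itself carry a label exceeding $\log \log n + z$ (if $R_n(u-) > z$ the new label already exceeds $M_n(u-) > \log \log n + z$; otherwise every other root-adjacent edge at time $u$ has label at most $\log \log n + z$, so the new maximum can exceed $\log \log n + z$ only because of the new edge). I would then compute the total rate at which new vertices attach to the root: a cut event removing $j$ vertices occurs at rate $n/(j(j+1))$, and at such an event the $i$-th new vertex attaches to the root with probability $1/(n-j+i-1)$; summing $\sum_{j=1}^{n-1} \frac{n}{j(j+1)} \sum_{m=n-j}^{n-1} \frac{1}{m}$ and swapping the order of summation collapses the expression to $H_{n-1}$. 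Combining this with the bound $H_{n-1} \leq 2 \log n$ (valid for all $n \geq 2$) and the independence and exponential distribution of the new label yields rate $H_{n-1}\, e^{-z}/\log n \leq 2 e^{-z}$ for the event 'new root edge with label $> \log\log n + z$'. Hence $E[J_2] \leq 2 \eps e^{-z}$, and combining the two bounds completes the proof. The main technical step is the root-attachment rate calculation that gives exactly $H_{n-1}$.
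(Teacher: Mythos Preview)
Your proof is correct and differs from the paper's in how the main $2e^{-z}$ contribution is obtained. The paper splits merger events into ``large'' (at least $n/2$ blocks lost; total rate $\leq 1$, which furnishes the $\eps$ term) and ``small''; for small mergers it bounds the probability that each reinserted vertex attaches to the root by $2/n$ and the block-loss rate by $\gamma_n\leq n\log n$, yielding $\eps\cdot n\log n\cdot\tfrac{2}{n}\cdot\tfrac{e^{-z}}{\log n}=2\eps e^{-z}$. Your route is cleaner: you compute the \emph{exact} stationary rate of new root attachments as $\sum_{j=1}^{n-1}\tfrac{n}{j(j+1)}\sum_{m=n-j}^{n-1}\tfrac{1}{m}=H_{n-1}$ via the sum swap and telescoping, so no large/small dichotomy is needed.

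One correction regarding mechanism (i): the claim that ``the max edge is cut at rate exactly $1$ by memorylessness'' is wrong. In this construction an edge is cut precisely when its label (decreasing at speed one) reaches $0$; hence at a cut time $u$ the cut edge carries the \emph{minimum} label overall, whereas the edge achieving $M_n(u-)$ carries the \emph{maximum} among the root-adjacent labels. These can coincide only in the degenerate case where the root has degree one and its single edge is being cut, in which event $M_n(u-)=0$ and the first newly inserted vertex (which must attach to the now-isolated root) receives a positive label exceeding $M_n(u-)$, so (ii) automatically fires. Thus $R_n$ only ever jumps upward, $J_1=0$ almost surely, and your bound $E[J_1]\leq\eps$ holds trivially---but not for the reason you gave (conditioning on being the maximum of several exponentials destroys the memoryless property). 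You may simply drop the $J_1$ term: your $J_2$ estimate alone already gives $2\eps e^{-z}\leq\eps(1+2e^{-z})$.
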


\begin{proof}
The rate at which blocks are being lost in the Bolthausen-Sznitman coalescent is given by
\begin{equation}\label{gamnbd}
\gamma_n = \sum_{k=2}^n \frac{n}{k} \leq n \log n.
\end{equation}
The rate of transitions in which at least half the blocks are lost is $$\sum_{k = \lceil n/2 \rceil + 1}^n \binom{n}{k} \lambda_{n,k} = \sum_{k = \lceil n/2 \rceil + 1}^n \frac{n}{k(k-1)} = n \bigg( \frac{1}{\lceil n/2 \rceil} - \frac{1}{n} \bigg) \leq 1,$$ so the probability that such a transition occurs by time $\eps$ is bounded by $\eps$.

By the construction, for each block that is lost due to a merger in the Bolthausen-Sznitman coalescent, a new vertex is added to the tree.  As long as the merger causes at most half the blocks to disappear, the probability that each new vertex attaches to the root is bounded by $2/n$.  Furthermore, when a new vertex attaches to the root, the probability that it causes the process $(R_n(t), t \geq 0)$ to jump above $z$ is the same as the probability that an exponential random variable with mean $1$ is greater than $\log \log n + z$, which is $e^{-(\log \log n + z)} = e^{-z}/\log n$.  Consequently, in view of (\ref{gamnbd}), the probability that the process $(R_n(t), t \geq 0)$ jumps above $z$ before time $\eps$ is bounded by $$\eps + \eps \gamma_n \cdot \frac{2}{n} \cdot \frac{e^{-z}}{\log n} \leq \eps (1 + 2e^{-z}),$$  which implies the result.
\end{proof}

\begin{Lemma}\label{DnLem}
Consider a random recursive tree with vertices labeled $1, \dots, n$.  Let $d_k$ be the depth of the vertex labeled $k$, which is the number of edges on the path from the root to $k$, and let $D_n = d_1 + \dots + d_n$.  Then
\begin{equation}\label{EDn}
E[D_n] = n \bigg( \sum_{k=1}^{n} \frac{1}{k} - 1 \bigg).
\end{equation}
Also, there exists a positive constant $C$ such that
\begin{equation}\label{VarDn}
\textup{Var}(D_n) \leq Cn^2.
\end{equation}
\end{Lemma}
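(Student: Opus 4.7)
The plan is to base both parts of the proof on the ancestor-indicator representation
$D_n = \sum_{1 \leq i < j \leq n} Z_{ij}$, where $Z_{ij} = {\bf 1}\{i \text{ is an ancestor of } j\}$. I would realize the random recursive tree via independent uniform parent choices $U_j \in \{1, \ldots, j-1\}$ for $j = 2, \ldots, n$, and observe that the event $\{Z_{ij} = 1\}$ depends only on $\{U_m : i < m \leq j\}$, since one detects the ancestors of $j$ by following $j, U_j, U_{U_j}, \ldots$ downward and stopping as soon as the label drops to $i$ or below.

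For (\ref{EDn}), a short induction on $j - i$ using the recursion $P(Z_{ij} = 1) = \frac{1}{j-1} + \frac{1}{j-1}\sum_{k=i+1}^{j-1} P(Z_{ik}=1)$ (obtained by conditioning on $U_j$) shows that $P(Z_{ij} = 1) = 1/i$. Summing then gives $E[D_n] = \sum_{1 \leq i < j \leq n} 1/i = nH_{n-1} - (n-1) = n(H_n - 1)$, which is (\ref{EDn}). Moreover, the dependence observation above yields a key uncorrelation: for $i < i' < k$, the indicators $Z_{ii'}$ and $Z_{i'k}$ are independent (they depend on disjoint sets of parent choices), and since both $i$ and $i'$ being ancestors of $k$ forces $i$ to be an ancestor of $i'$, we have $Z_{ik} Z_{i'k} = Z_{ii'} Z_{i'k}$ identically. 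Hence $P(Z_{ik} = 1, Z_{i'k} = 1) = (1/i)(1/i') = P(Z_{ik}=1) P(Z_{i'k}=1)$ and $\textup{Cov}(Z_{ik}, Z_{i'k}) = 0$. Consequently $\textup{Var}(d_k) = \sum_{i=1}^{k-1}(1/i)(1 - 1/i) \leq H_{k-1}$ and $E[d_k^2] \leq H_{k-1}^2 + H_{k-1} = O((\log k)^2)$.

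For (\ref{VarDn}), I would exploit the recursive construction $D_n = D_{n-1} + d_{U_n} + 1$ with $U_n$ uniform on $\{1, \ldots, n-1\}$ independent of the tree $T_{n-1}$ on the first $n-1$ vertices. Writing $\mu_n = E[D_n]$, $S_n = \sum_{i=1}^n d_i^2$, and $v_n = \textup{Var}(D_n)$, the standard conditional-variance decomposition yields
\[
v_n = \frac{n+1}{n-1} v_{n-1} + f_n, \qquad f_n = \frac{E[S_{n-1}]}{n-1} - \frac{\mu_{n-1}^2}{(n-1)^2} \geq 0,
\]
where nonnegativity of $f_n$ follows from Jensen applied to $d_{U_n} \mid T_{n-1}$. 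The homogeneous solution to this recursion is proportional to $n(n+1)/2$, so I would substitute $v_n = n(n+1) c_n$ to reduce it to $c_n - c_{n-1} = f_n/(n(n+1))$. The bound $E[d_k^2] = O((\log k)^2)$ from the previous paragraph gives $E[S_{n-1}] = O(n(\log n)^2)$, hence $f_n = O((\log n)^2)$, so $\sum_n f_n/n^2$ converges, $c_n$ stays bounded, and $v_n = O(n^2)$, as required.

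The main step requiring attention is the uncorrelation claim $\textup{Cov}(Z_{ik}, Z_{i'k}) = 0$ together with the resulting pointwise bound $E[d_k^2] = O((\log k)^2)$; once these are in hand, the variance recursion and its explicit homogeneous factor $n(n+1)/2$ make everything fall out routinely.
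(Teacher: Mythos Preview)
Your proof is correct, and the overall skeleton --- a variance recursion for $D_n$ coming from the conditional-variance decomposition, fed by a bound $E[d_k^2] = O((\log k)^2)$ --- is the same as the paper's. The details differ in two places. First, for the expectation you use the ancestor indicators $Z_{ij}$ and the formula $P(Z_{ij}=1)=1/i$, whereas the paper runs the direct recursion $E[D_m] = 1 + \tfrac{m}{m-1}E[D_{m-1}]$; both are short. Second, and more interestingly, your bound on $E[d_k^2]$ via the identity $Z_{ik}Z_{i'k}=Z_{ii'}Z_{i'k}$ and the independence of $Z_{ii'}$ and $Z_{i'k}$ (giving $\mathrm{Cov}(Z_{ik},Z_{i'k})=0$ and hence $\mathrm{Var}(d_k)\le H_{k-1}$) is cleaner than the paper's route, which bounds $P(d_k=j)$ by counting root-to-$k$ paths and then sums $j^2 P(d_k=j)$. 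On the variance recursion itself, the paper uses the cruder inequality $\mathrm{Var}(d_m\mid \mathcal F_{m-1})\le E[d_m^2\mid\mathcal F_{m-1}]$, obtaining the coefficient $m^2/(m-1)^2$ and the closed form $\mathrm{Var}(D_n)\le n^2\sum_{k}E[d_k^2]/k^2$; you keep the exact coefficient $(n+1)/(n-1)$ and solve via the homogeneous factor $n(n+1)$. Either way, $E[d_k^2]=O((\log k)^2)$ makes the sum converge and gives $\mathrm{Var}(D_n)=O(n^2)$. Your nonnegativity claim $f_n\ge 0$ is correct but not actually needed for the bound; bounding $|f_n|$ suffices.
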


\begin{proof}
We first prove (\ref{EDn}) by induction.  Because the vertex labeled $1$ is the root vertex, which has depth zero, clearly $E[D_1] = 0$, verifying (\ref{EDn}).  Suppose (\ref{EDn}) holds for $n = m-1$, where $m \geq 2$.  Let ${\cal F}_{m-1} = \sigma(d_1, \dots, d_{m-1})$.  Recall that the random recursive tree can be constructed so that the vertex labeled $m$ is attached to one of the previous $m-1$ vertices at random.  Consequently, the level of the vertex labeled $m$ is one greater than the level of a randomly chosen previous vertex, so
\begin{equation}\label{condedm}
E[d_m|{\cal F}_{m-1}] = 1 + \frac{d_1 + \dots + d_{m-1}}{m-1} = 1 + \frac{D_{m-1}}{m-1}
\end{equation}
and thus $$E[d_m] = 1 + \frac{E[D_{m-1}]}{m-1}.$$
Therefore, using the induction hypothesis, $$E[D_m] = E[D_{m-1}] + E[d_m] = 1 + \frac{m}{m-1} E[D_{m-1}] = 1 + m \bigg(\sum_{k=1}^{m-1} \frac{1}{k} - 1\bigg) = m \bigg(\sum_{k=1}^m \frac{1}{k} - 1  \bigg),$$ which implies that (\ref{EDn}) holds for all $n \in \N$. 

We next show by induction that
\begin{equation}\label{varDn}
\mbox{Var}(D_n) \leq n^2 \sum_{k=1}^n \frac{E[d_k^2]}{k^2}.
\end{equation}
The result is clear when $n = 1$ because $\mbox{Var}(D_1) = 0$.  Suppose the claim holds for $n = m-1$, where $m \geq 2$.  By (\ref{condedm}), $E[D_m|{\cal F}_{m-1}] = 1 + mD_{m-1}/(m-1)$, so using the induction hypothesis,
\begin{align}
\mbox{Var}(D_m) &= E[\mbox{Var}(D_m|{\cal F}_{m-1})] + \mbox{Var}(E[D_m|{\cal F}_{m-1}]) \nonumber \\
&= E[\mbox{Var}(d_m|{\cal F}_{m-1})] + \mbox{Var} \bigg( \frac{m}{m-1} D_{m-1} \bigg) \nonumber \\
&\leq E[E[d_m^2|{\cal F}_{m-1}]] + m^2 \sum_{k=1}^{m-1} \frac{E[d_k^2]}{k^2} \nonumber \\
&= m^2 \sum_{k=1}^m \frac{E[d_k^2]}{k^2}. \nonumber
\end{align}
Now (\ref{varDn}) follows by induction.

It remains to bound $E[d_k^2]$.  Suppose $k \geq 2$.  If $d_k = j \geq 2$, then there is a sequence of numbers $1 = i_0 < i_1 < \dots < i_j = k$ such that during the construction of the random recursive tree, vertex $i_{\ell}$ attaches to vertex $i_{\ell - 1}$ for $\ell = 1, \dots, j$.  Because the vertex $i_{\ell}$ has a choice of $i_{\ell} - 1$ vertices to which it can attach, the probability of this event is $1/[(i_1 - 1) \dots (i_{j-1}-1)(k-1)]$.  Thus,
\begin{align}
P(d_k = j) &= \frac{1}{k-1} \sum_{1 < i_1 < \dots < i_{j-1} < k} \frac{1}{(i_1 - 1) \dots (i_{j-1} - 1)} \nonumber \\
&\leq \frac{1}{(k-1)(j-1)!} \bigg( \sum_{i=2}^{k-1} \frac{1}{i-1} \bigg)^{j-1} \nonumber \\
&\leq \frac{(1 + \log k)^{j-1}}{(k-1)(j-1)!}. \nonumber
\end{align}
Combining this bound with the trivial bound that $P(d_k = j) \leq 1$ for $j = 1,2$ gives
\begin{align}
E[d_k^2] &= 5 + \frac{1}{k-1} \sum_{j=3}^{k-1} \frac{j^2 (1 + \log k)^{j-1}}{(j-1)!} \nonumber \\
&= 5 + \frac{(1 + \log k)^2}{k-1} \sum_{j=3}^{k-1} \frac{j^2}{(j-1)(j-2)} \cdot \frac{(1 + \log k)^{j-3}}{(j-3)!} \nonumber \\
&\leq 5 + \frac{9 (1 + \log k)^2}{2(k-1)} e^{1 + \log k} \nonumber \\
&\leq C (\log k)^2
\end{align}
for some positive constant $C$.  Combining this bound with (\ref{varDn}) gives (\ref{VarDn}).
\end{proof}

\subsection{Generator of the limit process}

From the Poisson process construction described in the introduction, it is clear that $(R(t), t \geq 0)$ is a Markov process.  Furthermore, it is easy to describe the transition semigroup of the Markov process.  Suppose $R(0) = x$.  Then for $y > x - t$, we have $R(t) \leq y$ when there are no points in the Poisson process $N$ above the line segment from $(0,y+t)$ to $(t,y)$.  It follows that $$P(R(t) \leq y) = \exp \bigg( - \int_0^t \int_{y+t-s}^{\infty} e^{-z} \: dz \: ds \bigg) = \exp \big(-e^{-y}(1 - e^{-t}) \big),$$ and $$P(R(t) = x - t) = \exp \big( -e^{-(x-t)}(1 - e^{-t}) \big).$$

Let $E = [-\infty, \infty)$, and for $x, y \in E$, let $d(x,y) = |e^x - e^y|$.  Then $(E, d)$ is a complete separable metric space.  Let $C_0(E)$ be the set of continuous real-valued functions on $[-\infty, \infty)$ that vanish at infinity with the norm $\|f\| = \sup_{x \in E} |f(x)|$.  Note that if $f \in C_0(E)$, then $\lim_{x \rightarrow -\infty} f(x)$ exists and equals $f(-\infty)$, and $\lim_{x \rightarrow \infty} f(x) = 0$.  If $f \in C_0(E)$ and $x \in E$, define
\begin{equation}\label{Pt}
P_tf(x) = \int_{x-t}^{\infty} f(y) e^{-e^{-y}(1 - e^{-t})} e^{-y}(1 - e^{-t}) \: dy + f(x - t) e^{-e^{-(x-t)}(1 - e^{-t})},
\end{equation}
so that $E[f(R(t))] = P_t f(x)$ when $R(0) = x$.  Note that the definition of $P_tf(x)$ makes sense when $x = -\infty$, in which case the second term is zero.  It is easily checked that $P_tf \in C_0(E)$ and that $P_t f \rightarrow f$ as $t \rightarrow 0$.  Consequently, $(P_t)_{t \geq 0}$ is a Feller semigroup on $C_0(E)$, and $(R(t), t \geq 0)$ is a Feller process with semigroup $(P_t)_{t \geq 0}$.

The following result characterizes the infinitesimal generator of the process $(R(t), t \geq 0)$ and describes a core for the generator.  We see from the form of the generator in (\ref{genA}) that for $y > x$, the process jumps from $x$ to $y$ at rate $e^{-y}$.  See also chapter 26 of \cite{dav2} for a full characterization of the domain of the extended generator.

\begin{Lemma}
Let $A$ be the infinitesimal generator associated with $(P_t)_{t \geq 0}$.  Let ${\cal C}$ be the collection of functions $f$ 
that are constant on $[-\infty, z]$ for some $z > -\infty$ and have the property that $f$, $f'$, and $f''$ are in $C_0(E)$ when we define $f'$ and $f''$ in the usual way on $(-\infty, \infty)$ and set $f'(-\infty) = f''(-\infty) = 0$.  Then for all $f \in {\cal C}$ and all $x \in E$,
\begin{equation}\label{genA}
Af(x) = -f'(x) + \int_x^{\infty} e^{-y}(f(y) - f(x)) \: dy.
\end{equation}
Furthermore, ${\cal C}$ is a core for $A$.
\end{Lemma}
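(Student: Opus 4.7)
The plan has two parts: (i) verify the formula \eqref{genA} for $Af$ on $\mathcal{C}$ by differentiating the semigroup \eqref{Pt} at $t=0$, and (ii) show that $\mathcal{C}$ is a core via a standard range-density criterion.

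For (i), fix $f \in \mathcal{C}$ and $x \in E$. The two pieces of $P_tf(x)$ in \eqref{Pt}---an integral term and an endpoint term---can be differentiated at $t=0$ directly. Using the identity $e^{-(x-t)}(1-e^{-t})=e^{-x}(e^t-1)$, which vanishes at $t=0$ with derivative $e^{-x}$, one finds that the endpoint term $f(x-t)e^{-e^{-(x-t)}(1-e^{-t})}$ has $t$-derivative $-f'(x)-f(x)e^{-x}$ at $t=0$. The integral term vanishes at $t=0$ because of the factor $1-e^{-t}$, and Leibniz's rule---noting that the boundary contribution at $y=x-t$ also carries that factor---gives $t$-derivative $\int_x^\infty f(y)e^{-y}\,dy$. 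Summing and rewriting $f(x)e^{-x}=f(x)\int_x^\infty e^{-y}\,dy$ yields \eqref{genA} pointwise. To conclude $f\in\mathcal{D}(A)$ I would upgrade this to uniform convergence $t^{-1}(P_tf-f)\to Af$ in the $C_0(E)$ norm via a second-order Taylor remainder bounded uniformly in $x$ by a multiple of $\|f\|+\|f'\|+\|f''\|$; the constancy of $f$ on some $[-\infty,z]$ extends the estimate to $x=-\infty$.

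For (ii), since $A$ generates the Feller semigroup $(P_t)_{t\ge 0}$, a standard Hille--Yosida-based criterion says that $\mathcal{C}\subset\mathcal{D}(A)$ is a core provided $\mathcal{C}$ is dense in $C_0(E)$ and $(\lambda-A)(\mathcal{C})$ is dense in $C_0(E)$ for some $\lambda>0$. Density of $\mathcal{C}$ in $C_0(E)$ is a routine truncate-and-mollify argument using that each $g \in C_0(E)$ has a limit $g(-\infty)$. For range density, given a smooth $g$ that is constant on $[-\infty,z]$ and vanishes for large $x$, one solves $(\lambda-A)f=g$ by introducing $J(x)=\int_x^\infty e^{-y}f(y)\,dy$ to reduce to the linear ODE $f'(x)+(\lambda+e^{-x})f(x)=g(x)+J(x)$, solvable via the integrating factor $e^{\lambda x-e^{-x}}$ and integration from $-\infty$ with the constant chosen to match $g(-\infty)$. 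Since both $g$ and $J$ are constant on $[-\infty,z]$ ($J$ because its integrand is supported in $[z,\infty)$), the ODE forces $f'\equiv 0$ there, so $f$ is constant on $[-\infty,z]$ as required.

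The principal obstacle is verifying that the solution $f$ of the coupled integro-differential equation really lies in $\mathcal{C}$: one must thread the constant of integration through the fixed-point equation for $J$ to match $g(-\infty)$ and check $f,f',f''\in C_0(E)$ using the decay of the integrating factor and the smoothness of $g$. If the direct construction proves fiddly, one can instead approximate the resolvent $R_\lambda g=\int_0^\infty e^{-\lambda t}P_tg\,dt$ by $\mathcal{C}$-functions, exploiting the smoothing visible in \eqref{Pt}.
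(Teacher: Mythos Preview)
Your treatment of part (i) is essentially the paper's: both compute $(P_tf-f)/t$ term by term and check uniform convergence using the boundedness of $f,f',f''$ and the constancy of $f$ on $[-\infty,z]$.

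For part (ii), your route diverges from the paper's and contains a genuine error. The paper does \emph{not} prove range density of $\lambda-A$; instead it uses the invariance criterion (Ethier--Kurtz, Chapter 1, Proposition 3.3): since $\mathcal{C}$ is dense in $C_0(E)$ and $P_t\mathcal{C}\subset\mathcal{C}$, it is a core. The invariance check is a one-line computation: if $f$ is constant on $[-\infty,z]$, then for $x\le z+t$ the formula \eqref{Pt} gives
\[
P_tf(x)=\int_z^\infty f(y)e^{-e^{-y}(1-e^{-t})}e^{-y}(1-e^{-t})\,dy+f(z)e^{-e^{-z}(1-e^{-t})},
\]
which does not depend on $x$; the regularity $P_tf,(P_tf)',(P_tf)''\in C_0(E)$ follows by differentiating under the integral.

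Your resolvent approach is more laborious, and the step ``$J$ is constant on $[-\infty,z]$ because its integrand is supported in $[z,\infty)$'' is wrong: if $f\equiv c$ on $[-\infty,z]$ then for $x<z$ one has $J(x)=c(e^{-x}-e^{-z})+J(z)$, which is not constant unless $c=0$. The ODE $f'(x)+(\lambda+e^{-x})f(x)=g(x)+J(x)$ is nonetheless \emph{consistent} with $f'\equiv 0$ on $[-\infty,z]$, because the $ce^{-x}$ on the left cancels the $ce^{-x}$ inside $J(x)$; but your stated justification is incorrect, and you still face the fixed-point problem for $J$ that you yourself flag. The paper's invariance argument sidesteps all of this.
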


\begin{proof}
Fix $f \in {\cal C}$.  Choose a real number $z$ such that $f$ is constant on $[-\infty, z]$.  By (\ref{Pt}), for all $x \in E$,
\begin{align}\label{3terms}
\frac{P_t f(x) - f(x)}{t} &= \frac{(f(x-t) - f(x))e^{-e^{-(x-t)}(1-e^{-t})}}{t} \nonumber \\
&\hspace{.2in}+ \frac{1}{t} \int_{x-t}^{\infty} (f(y) - f(x)) e^{-e^{-y}(1 - e^{-t})} e^{-y}(1 - e^{-t}) \: dy.
\end{align}
We need to show that the right-hand side of (\ref{3terms}) converges to the right-hand side of (\ref{genA}) uniformly in $x$ as $t \rightarrow 0$.  Because $f''$ is bounded, $$\frac{f(x-t) - f(x)}{t} \rightarrow -f'(x)$$ uniformly in $x$ as $t \rightarrow 0$.  Also, $$\bigg| \frac{f(x-t) - f(x)}{t} \big(e^{-e^{-(x-t)}(1-e^{-t})} - 1 \big) \bigg| \leq \bigg| \frac{f(x-t) - f(x)}{t} \bigg| e^{-(x-t)} t \leq \|f'\| t e^t e^{-z},$$ which tends to zero uniformly in $x$ as $t \rightarrow 0$.  Because
$$\bigg| 1 - \frac{e^{-e^{-y}(1 - e^{-t})}(1 - e^{-t})}{t} \bigg| \leq \big|1 - e^{-e^{-y}(1-e^{-t})}\big| + e^{-e^{-y}(1 - e^{-t})} \bigg|1 - \frac{1 - e^{-t}}{t} \bigg| \leq e^{-y}t + \frac{t}{2},$$ we have
$$\bigg| \int_x^{\infty} (f(y) - f(x)) e^{-y} \bigg( 1 - \frac{e^{-e^{-y}(1 - e^{-t})} (1 - e^{-t})}{t}\bigg) \: dy \bigg| \leq \int_z^{\infty} 2 \|f\| e^{-y}\bigg(e^{-y}t + \frac{t}{2} \bigg) \: dy,$$ which tends to zero uniformly in $x$ as $t \rightarrow 0$.  Likewise,
$$\frac{1}{t} \int_{x-t}^x (f(y) - f(x)) e^{-e^{-y}(1 - e^{-t})} e^{-y}(1 - e^{-t}) \: dy$$ tends to zero uniformly in $x$ as $t \rightarrow 0$.  Therefore,
$$\frac{1}{t} \int_{x-t}^{\infty} (f(y) - f(x)) e^{-e^{-y}(1 - e^{-t})} e^{-y}(1 - e^{-t}) \: dy \rightarrow \int_x^{\infty} e^{-y}(f(y) - f(x)) \: dy$$ uniformly in $x$ as $t \rightarrow 0$.  Equation (\ref{genA}) follows.

It remains to show that ${\cal C}$ is a core for $A$.  It is easy to see that ${\cal C}$ is dense in $C_0(E)$.  Suppose $f \in {\cal C}$, and choose $z$ so that $f$ is constant on $[-\infty, z]$.  For all $x \leq z + t$, $$P_tf(x) = \int_z^{\infty} f(y) e^{-e^{-y}(1-e^{-t})} e^{-y}(1-e^{-t}) \: dy + f(z) e^{-e^{-z}(1-e^{-t})}.$$  Thus, $P_tf$ is constant on $[-\infty, z+t]$.  By differentiating the right-hand side of (\ref{Pt}), we see that the first and second derivatives of $P_t f$ are continuous and vanish at infinity.  
Thus, $P_tf \in {\cal C}$.  It follows from Proposition 3.3 in Chapter 1 of \cite{ek86} that ${\cal C}$ is a core for $A$.
\end{proof}

\subsection{Convergence of finite-dimensional distributions}

We will show here that the finite-dimensional distributions of the processes $(R_n(t), t \geq 0)$ defined from random recursive trees in section \ref{constrrt} converge as $n \rightarrow \infty$ to the finite-dimensional distributions of $(R(t), t \geq 0)$.  

Let ${\cal G}_n(t)$ be the $\sigma$-field generated by the shape of the random recursive tree on $n$ vertices at time $t$ and the exponential random variables attached to the edges adjacent to the root.  That is, ${\cal G}_n(t)$ includes all the information about the tree at time $t$ except for the values of the exponential random variables on the edges that are not adjacent to the root.  Let ${\cal F}_n(t) = \sigma({\cal G}_n(s), 0 \leq s \leq t)$.  Note that $R_n(t)$ is ${\cal G}_n(t)$-measurable for all $t \geq 0$, and thus the process $(R_n(t), t \geq 0)$ is adapted to the filtration ${\cal F}_n = ({\cal F}_n(t), t \geq 0)$.  Because $(R_n(t), t \geq 0)$ is right continuous, it follows that $(R_n(t), t \geq 0)$ is ${\cal F}_n$-progressive (see p. 50 of \cite{ek86}).  

Fix a function $f \in {\cal C}$.  Let $\eps_n = n^{-4}$ for all $n \in \N$.  Let $$\xi_n(t) = \frac{1}{\eps_n} \int_0^{\eps_n} E[f(R_n(t+s))|{\cal F}_n(t)] \: ds$$ and $$\varphi_n(t) = \frac{1}{\eps_n} E[f(R_n(t+\eps_n)) - f(R_n(t))|{\cal F}_n(t)].$$  By (\ref{Anconv}), $R_n(0)$ converges in distribution to $R(0)$ as $n \rightarrow \infty$.  By Theorem 8.2 in chapter 4 of \cite{ek86} (see also parts (a) and (b) of Remark 8.3), to show that the finite-dimensional distributions of $(R_n(t), t \geq 0)$ converge to those of $(R(t), t \geq 0)$, it suffices to show that the following hold for all $t \geq 0$:
\begin{align}
&\sup_n \sup_{0 \leq s \leq t} E[|\xi_n(s)|] < \infty, \label{cond1} \\
&\sup_n \sup_{0 \leq s \leq t} E[|\varphi_n(s)|] < \infty, \label{cond2} \\
&\lim_{n \rightarrow \infty} E [|\xi_n(t) - f(R_n(t))|] = 0, \label{cond3} \\
&\lim_{n \rightarrow \infty} E [|\varphi_n(t) - (Af)(R_n(t))|] = 0. \label{cond4}
\end{align}

Note that (\ref{cond1}) is obvious because $|\xi_n(s)| \leq \|f\|$ for all $s \geq 0$.  To show (\ref{cond2}), choose $z > -\infty$ such that $f$ is constant on $[-\infty, z]$.  For $s \geq 0$, let $J_s$ be the event that there exists a time $u \in [s, s+\eps_n]$ such that $R_n(u) \neq R_n(u-)$ and $R_n(u) > z$.  Because the process $(R_n(s), s \geq 0)$ decreases at speed one between jumps, we have $f(R_n(s + \eps_n)) - f(R_n(s)) \leq \eps_n \|f'\|$ on the event $J_s^c$.  Because the process $(R_n(s), s \geq 0)$ is stationary, we have $P(J_s) = P(J_0)$ for all $s \geq 0$.  Therefore, by Lemma \ref{jumprate},
\begin{align}
|\varphi_n(s)| &\leq \frac{1}{\eps_n} E[|f(R_n(s + \eps_n)) - f(R_n(s))|{\bf 1}_{J_s^c}] + \frac{1}{\eps_n} E[|f(R_n(s + \eps_n)) - f(R_n(s))|{\bf 1}_{J_s}] \nonumber \\
&\leq \|f'\| + \frac{2}{\eps_n} \|f\| P(J_s) \nonumber \\
&\leq \|f'\| + (2 + 4 e^{-z}) \|f\|, \nonumber
\end{align}
which proves (\ref{cond2}).

Next, observe that $$|\xi_n(t) - f(R_n(t))| = \frac{1}{\eps_n} \bigg| \int_0^{\eps_n} E[f(R_n(t+s)) - f(R_n(t))|{\cal F}_n(t)] \: ds \bigg| \leq \eps_n \|f'\| + 2 \|f\| P(J_t|{\cal F}_n(t)).$$  Thus, by Lemma \ref{jumprate}, taking expectations of both sides gives
$$E[|\xi_n(t) - f(R_n(t))|] \leq \eps_n \|f'\| + 2 \|f\| P(J_t) \leq \eps_n \|f'\| + 2 \eps_n \|f\|(1 + 2e^{-z}) \rightarrow 0$$ as $n \rightarrow \infty$, which gives (\ref{cond3}).  It remains only to show (\ref{cond4}). 

When the tree is cut, we call the event a small cut if fewer than $n/(\log n)^{1/2}$ vertices are removed as a result of the cut, and a large cut otherwise.  We define the following five events.  Recall that $z$ has been chosen so that $f$ is constant on $[-\infty, z]$.
\begin{itemize}
\item Let $A_1$ be the event that between times $t$ and $t + \eps_n$, there is a small cut at some edge not adjacent to the root, one of the new edges attaches to the root and is assigned a label greater than $\log \log n + z$, and this is the only edge that attaches to the root between times $t$ and $t + \eps_n$ and gets a label greater than $\log \log n + z$.

\item Let $A_2$ be the event that between times $t$ and $t + \eps_n$, there is a large cut during which one of the new edges attaches to the root and is assigned a label greater than $\log \log n + z$.

\item Let $A_3$ be the event that between times $t$ and $t + \eps_n$, there is an event in which the tree is cut at some edge adjacent to the root, and one of the new edges attaches to the root and is assigned a label greater than $\log \log n + z$.

\item Let $A_4$ be the event that between times $t$ and $t + \eps_n$, two or more new edges attach to the root and are assigned labels greater than $\log \log n + z$.

\item Let $A_5 = A_1^c \cap A_2^c \cap A_3^c \cap A_4^c$.
\end{itemize}
The next lemma shows that $A_2$, $A_3$, and $A_4$ are unlikely to occur, which means that jumps of the process $R_n$ between times $t$ and $t + \eps_n$ will occur primarily on the event $A_1$.

\begin{Lemma}\label{badevents}
We have $$\lim_{n \rightarrow \infty} \frac{P(A_2 \cup A_3 \cup A_4)}{\eps_n} = 0.$$
\end{Lemma}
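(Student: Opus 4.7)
The plan is to bound $P(A_2)$, $P(A_3)$, and $P(A_4)$ separately, showing each is $o(\eps_n)$. The common ingredient is that a merger removing exactly $k$ vertices occurs at rate $n/(k(k+1))$ (equation (\ref{lambform}) with $k+1$ merging lineages), and, conditional on such a cut, the expected number of newly added vertices that attach to the root and receive an independent $\mathrm{Exp}(1)$ label exceeding $\log\log n + z$ is
$$q(k) := \frac{e^{-z}}{\log n}\,T(k), \qquad T(k):=\sum_{m=n-k}^{n-1}\frac{1}{m},$$
since the $j$-th added vertex attaches to the root with probability $1/(n-k+j-1)$ and its label is large with probability $e^{-z}/\log n$.

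For $A_2$, the rate is at most $\sum_{k \geq K}(n/(k(k+1)))\,q(k)$ with $K=\lceil n/(\log n)^{1/2}\rceil$. Swapping the order of summation and applying $\sum_{k=a}^{n-1} n/(k(k+1)) = n/a - 1$, this reduces to $(e^{-z}/\log n)\bigl[\sum_{\ell=K}^{n-1} 1/\ell + O(1)\bigr] = O(e^{-z}\log\log n/\log n)$, so $P(A_2) = o(\eps_n)$. For $A_4$, the total rate of new big-label root edges (without the large-cut restriction) equals $(e^{-z}/\log n)\,H_{n-1} = O(e^{-z})$ by a telescoping calculation. Applying $P(X\geq 2)\leq E[X(X-1)]/2$ to the count $X$ of such events in $[t,t+\eps_n]$, the cross-cut contribution to $E[X(X-1)]$ is $O(\eps_n^2)$, while the within-cut contribution is controlled by $\eps_n(e^{-z}/\log n)^2 \sum_k(n/(k(k+1)))T(k)^2$. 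An analogous split at $k=n/2$ shows $\sum_k (n/(k(k+1)))T(k)^2 = O(1)$, yielding $O(\eps_n/(\log n)^2)$. In all cases $P(A_4) = o(\eps_n)$.

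The most delicate case is $A_3$. Since each root-adjacent edge has rate $1$ of being cut (its $\mathrm{Exp}(1)$ label decreases at speed $1$),
$$R_3 \;=\; \frac{e^{-z}}{\log n}\,E\left[\sum_{e\text{ root-adj}} T(j(e))\right] \;=\; \frac{e^{-z}}{\log n}\sum_{\ell=1}^{n-1}\frac{E[N(\ell)]}{n-\ell},$$
where $N(\ell)$ denotes the number of root-adjacent edges whose subtree has size at least $\ell$. The essential input is that in a random recursive tree on $n$ vertices, the sizes of the root's children's subtrees form the Chinese restaurant process with parameter $1$ (equivalently, the cycles of a uniform random permutation of $n-1$ letters). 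Hence $E[N(\ell)] = H_{n-1} - H_{\ell-1}$. Splitting the sum at $\ell=n/2$ and using $H_{n-1} - H_{\ell-1} \leq 1+\log(n/\ell)$ on the left half and $H_{n-1}-H_{\ell-1} \leq (n-\ell)/\ell$ on the right half gives a sum that is uniformly $O(1)$; hence $R_3 = O(e^{-z}/\log n)$ and $P(A_3) = o(\eps_n)$.

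The hard part is the $A_3$ bound: a naive use of the identity $\sum_e j(e) = n-1$ only yields $R_3 = O(e^{-z})$, which gives $P(A_3) = O(\eps_n)$ rather than $o(\eps_n)$. The logarithmic improvement relies on the Ewens$(1)$/CRP$(1)$ distribution of root-child subtree sizes, whose tail $E[N(\ell)] \sim \log(n/\ell)$ provides exactly the cancellation against the $1/(n-\ell)$ weight needed to keep the double sum uniformly bounded in $n$.
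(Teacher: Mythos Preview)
Your argument is correct, but the treatment of $A_3$ diverges substantially from the paper's. You regard $A_3$ as the hardest case and invoke the Chinese restaurant / Ewens$(1)$ structure of the root's subtrees to obtain the tail estimate $E[N(\ell)]=H_{n-1}-H_{\ell-1}$, which is exactly the cancellation you need against the weight $1/(n-\ell)$ to keep the double sum bounded. This is a clean and self-contained route, and your telescoping identity $\sum_k \frac{n}{k(k+1)}T(k)=H_{n-1}$ for the total attachment rate is a nice by-product.

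The paper avoids this entirely by never bounding $P(A_3)$ on its own. Instead it bounds $P(A_3\cap A_2^c)$ and $P(A_4\cap A_2^c)$, which suffices once $P(A_2)=o(\eps_n)$ is known. On $A_2^c$ every cut removes fewer than $n/\sqrt{\log n}$ vertices, so each reattached vertex lands at the root with probability at most $2/n$, uniformly in the cut size. Combined with the observation (via sampling consistency) that the expected number of vertices removed by root-adjacent cuts during $[t,t+\eps_n]$ is $\eps_n(n-1)$, this gives $P(A_3\cap A_2^c)\le 2e^{-z}\eps_n/\log n$ in one line. The same uniform bound $2/n$ handles $A_4\cap A_2^c$ without needing $\sum_k\frac{n}{k(k+1)}T(k)^2=O(1)$.

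So the paper's trick of restricting to $A_2^c$ buys simplicity and dispenses with any structural input about subtree sizes; your approach buys a direct bound on $P(A_3)$ itself at the cost of importing the CRP correspondence. Both are valid. One minor remark: your ``cross-cut contribution $O(\eps_n^2)$'' to $E[X(X-1)]$ is slightly informal, since attachments from different cuts are not literally independent; the cleanest patch is the paper's, namely $P(\text{at least two cuts})\le \eps_n^2(n-1)^2=o(\eps_n)$, which since $\eps_n=n^{-4}$ is already negligible regardless of what the cuts do.
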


\begin{proof}
To bound $P(A_2)$, note that an event during which $k-1$ vertices are removed from the tree corresponds to a transition in the Bolthausen-Sznitman coalescent in which $k$ blocks merge into one.  Such events happen at rate $\binom{n}{k} \lambda_{n,k} = n/[k(k-1)]$ by (\ref{lambform}).  When such an event occurs, the expected number of vertices that reattach to the root is $1/(n-k+1) + \dots + 1/(n-1)$.  When a vertex reattaches to the root, the probability that its label exceeds $\log \log n + z$ is $e^{-(\log \log n + z)} = e^{-z}/\log n$.  Thus, for sufficiently large $n$,
\begin{align}\label{PA2}
P(A_2) &\leq \eps_n \sum_{k = \lceil n/(\log n)^{1/2} + 1 \rceil}^n \frac{n}{k(k-1)} \bigg( \sum_{j=n-k+1}^{n-1} \frac{1}{j} \bigg) \frac{e^{-z}}{\log n} \nonumber \\
&\leq \frac{\eps_n e^{-z}}{\log n} \bigg( \sum_{k= \lceil n/(\log n)^{1/2} + 1 \rceil}^{\lfloor n/2 \rfloor + 1} \frac{n}{k(k-1)} \cdot \frac{2(k-1)}{n} + \sum_{k = \lfloor n/2 \rfloor + 2}^{n} \frac{n}{k(k-1)} \bigg( \sum_{j=n-k+1}^{n-1} \frac{1}{j} \bigg) \bigg) \nonumber \\
&\leq \frac{\eps_n e^{-z}}{\log n} \bigg( 2 \bigg( \log n - \log \bigg( \frac{n}{(\log n)^{1/2}} \bigg) \bigg) + \frac{4}{n} \sum_{j=1}^n \sum_{k=n-j+1}^{n} \frac{1}{j} \bigg) \nonumber \\
&= \frac{\eps_n e^{-z} (\log \log n + 4)}{\log n}.
\end{align}

To bound $P(A_3)$, note that an event in which the tree is cut at some edge adjacent to the root corresponds to a merger in the Bolthausen-Sznitman coalescent that involves the block containing the integer $1$.  By the sampling consistency of the Bolthausen-Sznitman coalescent, every other block merges with this block at rate $\lambda_2 = 1$.  Consequently, the expected number of blocks that are removed from the tree between times $t$ and $t + \eps_n$ when the tree is cut at an edge adjacent to the root is $\eps_n (n-1)$.  Provided that fewer than $n/(\log n)^{1/2}$ vertices are removed as a result of the cut, the probability that a given vertex reattaches to the root is at most $2/n$.  The probability that the label on the new edge exceeds $\log \log n + z$ is $e^{-z}/\log n$ as before.  Thus, for sufficiently large $n$
\begin{equation}\label{PA3}
P(A_3 \cap A_2^c) \leq \eps_n (n-1) \cdot \frac{2}{n} \cdot \frac{e^{-z}}{\log n} \leq \frac{2e^{-z} \eps_n}{\log n}.
\end{equation}

To bound $P(A_4)$, note that there are two ways that $A_4$ can occur.  Either the tree can be cut twice between times $t$ and $t + \eps_n$, or two or more edges can reattach to the root after a single cut.  Because cuts of the tree happen at times of a Poisson process of rate $\lambda_n = n-1$, the probability that two or more cuts happen between times $t$ and $t + \eps_n$ is at most $\eps_n^2 (n-1)^2$.  If there is an event in which $k-1$ vertices are removed from the tree following a cut, there are $\binom{k-1}{2}$ pairs of vertices that could reattach to the root.  On $A_2^c$, for sufficiently large $n$, the chance that two given vertices reattach to the root is at most $4/n^2$, and each new edge independently has probability $e^{-z}/(\log n)$ of having a label greater than $\log \log n + z$.  Thus,
\begin{align}\label{PA4}
P(A_4 \cap A_2^c) &\leq \eps_n^2 (n-1)^2 + \eps_n \sum_{k=3}^{\lceil n/(\log n)^{1/2} \rceil} \frac{n}{k(k-1)} \binom{k-1}{2} \frac{4 e^{-2z}}{n^2 (\log n)^2} \nonumber \\
&\leq \eps_n^2 (n-1)^2 + \frac{2 e^{-2z} \eps_n}{(\log n)^{5/2}}.
\end{align}
The result follows from (\ref{PA2}), (\ref{PA3}), and (\ref{PA4}).
\end{proof}

\begin{Lemma}\label{A1lem}
We have $$P(A_1) \leq 2 e^{-z} \eps_n$$ for sufficiently large $n$.  Furthermore,
$$\lim_{n \rightarrow  \infty} E \bigg[ \bigg| \frac{e^z}{\eps_n} P(A_1|{\cal F}_n(t)) - 1 \bigg| \bigg] = 0.$$
\end{Lemma}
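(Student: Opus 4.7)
The plan is to compute, for each realization of the tree at time $t$, the instantaneous rate of $A_1$-type transitions given $\mathcal{F}_n(t)$, multiply by $\eps_n$, and check that the limit is the jump rate $e^{-z}$ of $(R(t), t \in \R)$. Both parts of the lemma then follow from concentration of this conditional rate.

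For the upper bound, I would dominate $P(A_1)$ by the expected number of triples (small non-root cut in $[t,t+\eps_n]$, new vertex attaching to the root, that vertex receiving an exponential label greater than $\log\log n + z$). By (\ref{lambform}), a cut that removes $k-1$ vertices occurs at rate $n/[k(k-1)]$; conditional on it, the expected number of new root-attachments is $H_{n-1}-H_{n-k}$ (sum of $1/(n-k+j)$ for $j=1,\dots,k-1$); and each carries a large label independently with probability $e^{-z}/\log n$. Restricting to small cuts ($k-1\leq K:=\lceil n/(\log n)^{1/2}\rceil$) and using $H_{n-1}-H_{n-k}\leq 2(k-1)/n$ for large $n$ gives
$$P(A_1)\;\leq\;\eps_n\cdot\frac{e^{-z}}{\log n}\sum_{k=2}^{K+1}\frac{2}{k}\;\leq\;\frac{2e^{-z}\log(K+1)\eps_n}{\log n}\;\leq\;2e^{-z}\eps_n$$
once $\log(K+1)\leq \log n$.

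For the $L^1$ statement, the first step is to reduce to the cleaner event $B$ that at least one new edge attaches to the root in $[t,t+\eps_n]$ with label greater than $\log\log n + z$. A direct case analysis shows $A_1 = B\setminus(A_2\cup A_3\cup A_4)$, so
$$|P(A_1|\mathcal{F}_n(t)) - P(B|\mathcal{F}_n(t))|\;\leq\;P(A_2\cup A_3\cup A_4\mid\mathcal{F}_n(t)),$$
whose expectation is $o(\eps_n)$ by Lemma \ref{badevents}. It therefore suffices to show $E[|P(B|\mathcal{F}_n(t))/\eps_n - e^{-z}|]\to 0$. Given the tree $T$ at time $t$, the rate of $B$-events is $\lambda_n(T)=(e^{-z}/\log n)R(T)$, where $R(T)=\sum_{e\in E(T)}(H_{n-1}-H_{n-|T_e|-1})$ and $|T_e|$ is the size of the subtree below $e$. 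Since $(n-1)\eps_n = n^{-3}$, two or more cuts in $[t,t+\eps_n]$ have probability $O(\eps_n^2)$, and a Poisson-type approximation yields $P(B|\mathcal{F}_n(t)) = \eps_n \lambda_n(T_n(t)) + O(\eps_n^2)$. The task reduces to proving $\lambda_n(T_n(t))\to e^{-z}$ in $L^1$. Swapping the order of summation (using the identity $\sum_{k=n-j+1}^n 1/[k(k-1)] = 1/(n-j) - 1/n$) gives $E[R(T_n)] = H_{n-1}$, so $E[\lambda_n(T_n)]\to e^{-z}$. For concentration I would split $R(T_n)$ according to whether $|T_e|\leq K$ or $|T_e|>K$: the large-subtree contribution has expectation $O(\log\log n/\log n)\to 0$ by the $A_2$-computation in Lemma \ref{badevents} and, being nonnegative, vanishes in $L^1$; the small-subtree contribution is well approximated by $D_n/n$ because $H_{n-1}-H_{n-|T_e|-1} = |T_e|/n + O(|T_e|^2/n^2)$ when $|T_e|\leq K$, and Lemma \ref{DnLem} then gives $\mathrm{Var}(D_n/n)\leq C$, so $\mathrm{Var}(\lambda_n(T_n)) = O(1/(\log n)^2)$. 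Cauchy--Schwarz yields $E[|\lambda_n(T_n)-E[\lambda_n(T_n)]|] = O(1/\log n)\to 0$, completing the argument.

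The main obstacle is this last concentration step: the approximation $H_{n-1}-H_{n-|T_e|-1}\approx |T_e|/n$ can fail for edges with atypically large subtrees (where it could be as large as $\log n$), so these edges must first be isolated at the threshold $K$ and shown to contribute negligibly via the Lemma \ref{badevents} estimates, after which the variance bound on $D_n$ from Lemma \ref{DnLem} controls what remains.
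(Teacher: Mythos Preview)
Your proposal is correct and follows essentially the same route as the paper: both reduce the problem to showing $D_n/(n\log n)\to 1$ in $L^1$ via Lemma~\ref{DnLem}, after isolating the contribution of edges with large subtrees (your threshold $K$ plays exactly the role of the paper's $V_n$). The paper parametrizes the $A_1$-rate by which \emph{vertex} reattaches and bounds $P(A_1\mid\mathcal{F}_n(t))$ directly from above and below, whereas you parametrize by which \emph{edge} is cut and pass through the auxiliary event $B$ using Lemma~\ref{badevents}; since $\sum_k(d_k-1-v_k)=\sum_{e\in E'}|T_e|$, these are dual bookkeeping choices leading to the same estimate.
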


\begin{proof}
Recall that in our construction using trees, we labeled the vertices by the time in which they were added rather than using the vertex labels $1, \dots, n$.  However, for the purposes of this proof, we will arbitrarily number the vertices at time $t$ by the integers $1, \dots, n$, with the root being vertex $1$.  For $k \geq 2$, let $d_k$ be the depth of vertex $k$, which is the number of edges on the path from the root to $k$.  Let $v_k$ be the number of edges along the path from the root to $k$ that are not adjacent to the root but that have at least $n/(\log n)^{1/2}$ vertices below them.  That is, $v_k$ is the number of edges $e$ along this path not adjacent to the root such that if we cut the tree at the edge $e$, it would be classified as a large cut.  Define $D_n(t) = d_2 + \dots + d_n$ and $V_n(t) = v_2 + \dots + v_n$.  For each $k = 2, \dots, n$, we will separately bound the probability that $A_1$ occurs and that $k$ is the vertex that reattaches to the root with a label of at least $\log \log n + z$.

Note that there are $d_k - 1 - v_k$ edges not adjacent to the root such that, if the tree were cut at that edge, the vertex labeled $k$ would be removed from the tree and this would be a small cut.  The probability that one of these edges is cut before time $t + \eps_n$ is $1 - e^{-\eps_n(d_k - 1 - v_k)}$, which is between $\eps_n(d_k - 1 - v_k) - \eps_n^2 (d_k - 1 - v_k)^2/2$ and $\eps_n(d_k - 1 - v_k)$.  The probability that the vertex labeled $k$ reattaches to the root is between $1/n$ and $1/(n - n/\sqrt{\log n})$, and the probability that the new edge label is at least $\log \log n + z$ is $e^{-z}/\log n$.  Therefore,
\begin{align}\label{A1Fupper}
P(A_1|{\cal F}_n(t)) &\leq \sum_{k=2}^n \eps_n (d_k - 1 - v_k) \cdot \frac{1}{n - n/\sqrt{\log n}} \cdot \frac{e^{-z}}{\log n} \nonumber \\
&= \frac{\eps_n(D_n(t) - (n-1) - V_n(t)) e^{-z}}{n \log n} \bigg( \frac{\sqrt{\log n}}{\sqrt{\log n} - 1} \bigg) \nonumber \\
&\leq \frac{\eps_n D_n(t) e^{-z}}{n \log n} \bigg( \frac{\sqrt{\log n}}{\sqrt{\log n} - 1} \bigg).
\end{align}
Because $E[D_n(t)] \leq n \log n$ by Lemma \ref{DnLem}, it follows that $P(A_1) \leq 2 e^{-z} \eps_n$ for all $n$ large enough that $\sqrt{\log n}/(\sqrt{\log n} - 1) \leq 2$. 

Suppose the vertex $k$ is cut from the tree.  The probability that vertex $k$ and some other vertex both reattach to the root with new edge labels greater than $\log \log n + z$ is at most $$(n-2) \bigg( \frac{1}{n - n/\sqrt{\log n}} \bigg)^2 \bigg( \frac{e^{-z}}{\log n} \bigg)^2 \leq \frac{2 e^{-2z}}{n (\log n)^2}$$ for sufficiently large $n$.  Also, the probability that there are two cuts to the tree before time $t + \eps_n$ is at most $(n-1)^2\eps_n^2$.  Combining these observations, we get
\begin{align}\label{A1Flower}
P(&A_1|{\cal F}_n(t)) \nonumber \\
&\geq \sum_{k=2}^n \bigg(\bigg( \eps_n (d_k - 1 - v_k) - \frac{\eps_n^2 (d_k - 1 - v_k)^2}{2} \bigg)\bigg(\frac{e^{-z}}{n \log n} - \frac{2 e^{-2z}}{n (\log n)^2} \bigg) - \eps_n^2(n-1)^2\bigg) \nonumber \\
&\geq \frac{\eps_n(D_n(t) - (n-1) - V_n(t)) e^{-z}}{n \log n} - \frac{\eps_n^2 D_n(t)^2 e^{-z}}{2n \log n} - \frac{2 \eps_n D_n(t) e^{-2z}}{n (\log n)^2} - \eps_n^2 n^3. 
\end{align}
Combining (\ref{A1Fupper}) and (\ref{A1Flower}) gives
\begin{align}\label{6tms}
\bigg| \frac{e^z}{\eps_n} P(A_1|{\cal F}_n(t)) - 1 \bigg| &\leq \bigg| \frac{D_n(t)}{n \log n} - 1 \bigg| + \frac{(n - 1) + V_n(t)}{n \log n} \nonumber \\
&\hspace{.2in}+ \frac{D_n(t)}{n \log n} \bigg( \frac{1}{\sqrt{\log n} - 1} \bigg) + \frac{\eps_n D_n(t)^2}{2n \log n} + \frac{2 D_n(t) e^{-z}}{n (\log n)^2} + \frac{\eps_n e^z}{n^3}.
\end{align}

We need to show that the six terms on the right-hand side of (\ref{6tms}) tend to zero in expectation as $n \rightarrow \infty$.  By Lemma \ref{DnLem} and the Cauchy-Schwarz Inequality,
\begin{align}
E \bigg[ \bigg| \frac{D_n(t)}{n \log n} - 1 \bigg| \bigg] &\leq \frac{1}{n \log n} E \big[ \big| D_n(t) - E[D_n(t)] \big| \big] + \bigg| \frac{E[D_n(t)]}{n \log n} - 1 \bigg| \nonumber \\
&\leq \frac{\mbox{Var}(D_n(t))^{1/2}}{n \log n} + \bigg| \frac{E[D_n(t)]}{n \log n} - 1 \bigg| \rightarrow 0
\end{align}
as $n \rightarrow \infty$.  Because $E[V_n(t)]$ is at most $n$ times the rate of transitions in the Bolthausen-Sznitman coalescent that cause at least $n/(\log n)^{1/2}$ blocks to be lost, we have
$$E[V_n(t)] \leq n \sum_{k = \lceil n/(\log n)^{1/2} + 1 \rceil}^n \binom{n}{k} \lambda_{n,k} = n \sum_{k = \lceil n/(\log n)^{1/2} + 1 \rceil}^n \frac{n}{k(k-1)} \leq n \sqrt{\log n},$$ from which it follows that the expected value of the second term on the right-hand side of (\ref{6tms}) tends to zero as $n \rightarrow \infty$.  Using Lemma \ref{DnLem} and the fact that $\eps_n = n^{-4}$, it is easily checked that the expectations of the last four terms on the right-hand side of (\ref{6tms}) tend to zero as $n \rightarrow \infty$.
\end{proof}

\begin{Prop}\label{fddprop}
The finite-dimensional distributions of the processes $(R_n(t), t \geq 0)$ converge as $n \rightarrow \infty$ to the finite-dimensional distributions of $(R(t), t \geq 0)$.
\end{Prop}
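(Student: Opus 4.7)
The objective is to verify the four Ethier--Kurtz conditions (\ref{cond1})--(\ref{cond4}), of which the first three have already been dispatched in the preceding paragraphs; only (\ref{cond4}) remains. My plan is to fix a function $f \in \mathcal{C}$ (with $f$ constant on $[-\infty, z]$) and decompose
\[
\varphi_n(t) = \frac{1}{\eps_n} \sum_{i=1}^{5} E\big[ (f(R_n(t+\eps_n)) - f(R_n(t))) \mathbf{1}_{A_i} \,\big|\, \mathcal{F}_n(t) \big],
\]
showing that the $A_1$ piece produces the integral term in the generator (\ref{genA}), the $A_5$ piece produces the $-f'$ drift term, and the remaining pieces vanish. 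For $i \in \{2,3,4\}$, crude domination by $2\|f\|$ combined with Lemma \ref{badevents} gives a contribution to $\varphi_n(t)$ that is $o(1)$ in $L^1$.

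On $A_5$ no new edge attaches to the root with label exceeding $\log\log n + z$ during $[t, t+\eps_n]$, so every upward jump of $R_n$ in this interval lands in $[-\infty, z]$ where $f$ is constant. The only other way that $R_n$ can fail to drift downward at speed one is that the current maximum root-adjacent edge is itself cut; I plan to bound the probability of this by $O(\eps_n)$ (since it would force $M_n(t) \leq \eps_n$, an event of vanishing $\mathcal{F}_n(t)$-probability by stationarity), and to observe that on that event both $R_n(t)$ and $R_n(t+\eps_n)$ lie below $z$, so $f$ does not change. On the complement, $R_n(t+\eps_n) = R_n(t) - \eps_n$ or else jumps to some value still in $[-\infty, z]$, so a Taylor expansion using the boundedness of $f''$ and the vanishing of $f'$ on $[-\infty, z]$ gives
\[
\frac{1}{\eps_n} E\big[ (f(R_n(t+\eps_n)) - f(R_n(t))) \mathbf{1}_{A_5} \,\big|\, \mathcal{F}_n(t) \big] \longrightarrow -f'(R_n(t)) \quad \text{in } L^1.
\]

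The main contribution comes from $A_1$. Conditional on $A_1$, memorylessness of the exponential distribution shows that the label of the unique new root-adjacent edge with label above $\log\log n + z$, shifted by $-\log\log n$, has density $e^{-(y-z)}\mathbf{1}_{\{y \geq z\}}$. Writing $Y$ for this shifted label, the new value of $R_n$ is $\max(Y, R_n(t))$ (up to the $\eps_n$-sized decrement). A case split on $R_n(t) \geq z$ versus $R_n(t) < z$, together with the constancy of $f$ on $[-\infty, z]$ in the latter regime, reduces both cases to
\[
E\big[ f(R_n(t+\eps_n)) - f(R_n(t)) \,\big|\, A_1, \mathcal{F}_n(t) \big] = e^z \int_{R_n(t)}^{\infty} (f(y) - f(R_n(t))) e^{-y} \, dy + o(1).
\]
Multiplying by $P(A_1 \mid \mathcal{F}_n(t))/\eps_n$, which tends to $e^{-z}$ in $L^1$ by Lemma \ref{A1lem}, yields the integral term of (\ref{genA}) in the limit. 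Summed with the $A_5$ drift, the total is exactly $(Af)(R_n(t))$, establishing (\ref{cond4}). The uniform boundedness of $\varphi_n(t)$ obtained in the proof of (\ref{cond2}) upgrades all convergences to $L^1$ via dominated convergence.

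The main delicate point will be the case analysis for the $A_1$ contribution: the event $A_1$ is defined using the fixed threshold $z$, while the trajectory of $R_n$ changes only when the incoming label exceeds the moving threshold $R_n(t) + \log\log n$. The reconciliation relies crucially on $f$ being constant on $[-\infty, z]$, which forces the expressions $e^z \int_z^{\infty}(f(y) - f(R_n(t)))e^{-y}\,dy$ and $e^z \int_{R_n(t)}^{\infty}(f(y) - f(R_n(t)))e^{-y}\,dy$ to agree when $R_n(t) < z$. Once (\ref{cond4}) is proved, Theorem 8.2 of Chapter 4 of \cite{ek86}, combined with the convergence $R_n(0) \Rightarrow R(0)$ from (\ref{Anconv}), yields the convergence of finite-dimensional distributions.
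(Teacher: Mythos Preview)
Your proposal is correct and follows essentially the same decomposition and argument as the paper's proof: the events $A_2,A_3,A_4$ are killed by Lemma~\ref{badevents}, $A_5$ produces the drift $-f'(R_n(t))$ via Taylor expansion, and $A_1$ produces the jump integral via Lemma~\ref{A1lem} and the exponential conditional law of the new label. One minor slip worth cleaning up: the event $\{M_n(t)\le\eps_n\}$ is $\mathcal{F}_n(t)$-measurable, so speaking of its ``vanishing $\mathcal{F}_n(t)$-probability'' is meaningless and bounding its probability by $O(\eps_n)$ would not in any case suffice after dividing by $\eps_n$ --- but your subsequent observation that both $R_n(t)$ and $R_n(t+\eps_n)$ then lie in $[-\infty,z]$ (where $f$ is constant) is exactly the right fix, and matches the paper's cleaner formulation that on $A_5$ one always has $f(R_n(t+\eps_n))=f(R_n(t)-\eps_n)$.
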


\begin{proof}
Recall that it remains only to show (\ref{cond4}), which is equivalent to showing that
$$\lim_{n \rightarrow \infty} E \bigg[ \bigg| \frac{1}{\eps_n} E[f(R_n(t + \eps_n)) - f(R_n(t))|{\cal F}_n(t)] + f'(R_n(t)) - \int_{R_n(t)}^{\infty} e^{-y}(f(y) - R_n(t)) \: dy \bigg| \bigg] = 0,$$
where $f$ is a function in ${\cal C}$ which is constant on $[-\infty, z]$.  We evaluate $f(R_n(t + \eps_n)) - f(R_n(t))$ on the three disjoint events $A_1$, $A_2 \cup A_3 \cup A_4$, and $A_5$.

By Lemma \ref{badevents},
\begin{equation}\label{badterm}
E \bigg[ \bigg| \frac{1}{\eps_n} E[(f(R_n(t + \eps_n)) - f(R_n(t))) {\bf 1}_{A_2 \cup A_3 \cup A_4} | {\cal F}_n(t)] \bigg| \bigg] \leq \frac{2 \|f\|}{\eps_n} P(A_2 \cup A_3 \cup A_4) \rightarrow 0
\end{equation}
as $n \rightarrow \infty$.  Note that $f(R_n(t + \eps_n)) = f(R_n(t) - \eps_n)$ on $A_5$ because either the process $(R_n(t), t \geq 0)$ does not jump between times $t$ and $t + \eps_n$, in which case $R_n(t + \eps_n) = R_n(t) - \eps_n$, or else $R_n(t + \eps_n) \leq z$, in which case $f(R_n(t + \eps_n)) = f(R_n(t) - \eps_n) = f(z)$.  Therefore, using Lemmas \ref{badevents} and \ref{A1lem},
\begin{align}\label{driftterm}
E \bigg[ &\bigg| \frac{1}{\eps_n} E[(f(R_n(t + \eps_n)) - f(R_n(t))){\bf 1}_{A_5} | {\cal F}_n(t)] + f'(R_n(t)) \bigg| \bigg] \nonumber \\
&= E \bigg[ \bigg| E \bigg[ \bigg( \frac{f(R_n(t) - \eps_n) - f(R_n(t))}{\eps_n} + f'(R_n(t)) \bigg) {\bf 1}_{A_5} + f'(R_n(t)){\bf 1}_{A_5^c} \bigg| {\cal F}_n(t) \bigg] \bigg| \bigg] \nonumber \\
&\leq E \bigg[ \bigg| \frac{f(R_n(t) - \eps_n) - f(R_n(t))}{\eps_n} + f'(R_n(t)) \bigg| \bigg] + \|f'\| P(A_5^c) \nonumber \\
&= E \bigg[ \bigg| \int_{R_n(t) - \eps_n}^{R_n(t)} \frac{f'(s) - f'(R_n(t))}{\eps_n} \: ds \bigg| \bigg] + \|f'\|P(A_1 \cup A_2 \cup A_3 \cup A_4) \nonumber \\
&\leq \eps_n \|f''\| + \|f'\|(P(A_1) + P(A_2) + P(A_3) + P(A_4)) \rightarrow 0
\end{align}
as $n \rightarrow \infty$.

On $A_1$, there is a unique time $\tau \in [t, t + \eps_n]$ such that at time $\tau$, a new edge attaches to the root and is assigned a label greater than $\log \log n + z$.  Denote by $K$ the value of this label minus $\log \log n$, and let $J = \max\{K, R_n(t)\}$.  Conditional on $A_1$ and ${\cal F}_n(t)$, the distribution of $K$ has a density given by $k(y) = e^{z-y} {\bf 1}_{\{y > z\}}$.  Therefore,
\begin{align}
E[&(f(R_n(t + \eps_n)) - f(R_n(t)) {\bf 1}_{A_1}|{\cal F}_n(t)] \nonumber \\
&= E[(f(R_n(t + \eps_n)) - f(J)){\bf 1}_{A_1}|{\cal F}_n(t)] + E[(f(J) - f(R_n(t)) {\bf 1}_{A_1}|{\cal F}_n(t)] \nonumber \\
&= E[(f(R_n(t + \eps_n)) - f(J)){\bf 1}_{A_1}|{\cal F}_n(t)] + \bigg(\int_{R_n(t)}^{\infty} e^{z-y}(f(y) - f(R_n(t)) \: dy\bigg)P(A_1|{\cal F}_n(t)). \nonumber
\end{align}
Note that $J - \eps_n \leq R_n(t + \eps_n) \leq J$ on $A_1$, so $$|E[f(R_n(t + \eps_n)) - f(J)) {\bf 1}_{A_1}|{\cal F}_n(t)]| \leq \eps_n \|f'\| P(A_1|{\cal F}_n(t)).$$  It follows that
\begin{align}\label{jumpterm}
E \bigg[ \bigg| \frac{1}{\eps_n} E[&f(R_n(t + \eps_n)) - f(R_n(t)) {\bf 1}_{A_1}|{\cal F}_n(t)] - \int_{R_n(t)}^{\infty} e^{-y}(f(y) - f(R_n(t)) \: dy \bigg| \bigg] \nonumber \\
&\leq \|f'\| P(A_1) + E \bigg[ \bigg| \bigg(\int_{R_n(t)}^{\infty} e^{-y}(f(y) - f(R_n(t)) \: dy \bigg) \bigg( \frac{e^z P(A_1|{\cal F}_n(t))}{\eps_n} - 1\bigg) \bigg| \bigg] \nonumber \\
&\leq \|f'\| P(A_1) + 2 \|f\| e^{-z} E \bigg[ \bigg| \frac{e^z}{\eps_n}P(A_1|{\cal F}_n(t)) - 1 \bigg| \bigg].
\end{align}
It now follows from Lemma \ref{A1lem} that the right-hand side of (\ref{jumpterm}) tends to zero as $n \rightarrow \infty$.  The result now follows by combining this observation with (\ref{badterm}) and (\ref{driftterm}).
\end{proof}

\subsection{Tightness}

Here we show that the sequence of processes $(R_n)_{n=1}^{\infty}$ is relatively compact.  By Theorem 7.8 in Chapter 3 of \cite{ek86}, this result in combination with Proposition \ref{fddprop} implies that the processes $(R_n(t), t \geq 0)$ converge in the Skorohod topology to $(R(t), t \geq 0)$. 

For $\delta > 0$ and $t > 0$, let
$$w_n(\delta, t) = \inf_{\{t_i\}} \max_i \sup_{r,s \in [t_{i-1}, t_i)} |R_n(r) - R_n(s)|,$$ where the infimum is taken over all partitions of the form $0 = t_0 < t_1 < \dots < t_m = t$ such that $t_i - t_{i-1} > \delta$ for $i = 1, \dots, m$.  By Corollary 7.4 in Chapter 3 of \cite{ek86}, the sequence $(R_n)_{n=1}^{\infty}$ is relatively compact provided that for all $\eps > 0$ and $t > 0$, there exists $\delta > 0$ such that
\begin{equation}\label{wcond}
\limsup_{n \rightarrow \infty} P(w_n(\delta, t) \geq \eps) \leq \eps.
\end{equation}
Therefore, we can conclude the proof of Theorem \ref{MRCAthm} by showing (\ref{wcond}).

Fix $t > 0$.  Choose $\eps \in (0,1)$, and choose $y < 0$ such that $P(R_n(0) \leq y+t) < \eps/6$.  Note that if $R_n(0) > y + t$, then $R_n(s) > y$ for all $s \leq t$.  Let $$\delta = \frac{e^{2y} \eps}{36 \max\{t, 1\}}.$$  Let $0 < \tau_{1,n} < \tau_{2,n} < \dots$ denote the jump times of $(R_n(s), s \geq 0)$.  As long as $\tau_{j,n} - \tau_{j-1,n} > \delta$ for all $j$ such that $\tau_{j,n} \leq t$ and there are no jump times in $[0, \delta]$ or $[t-\delta, t]$, it is easy to choose the times $t_0, \dots, t_n$ such that $\delta < t_i - t_{i-1} < 2 \delta$ for $i = 1, \dots, n$ and for all $j$ such that $\tau_{j,n} \leq t$, we have $\tau_{j,n} = t_i$ for some $i$.  That is, there is one of the $t_i$ at every jump time of the process.  In this case, whenever $r,s \in [t_{i-1}, t_i)$, we have $|R_n(r) - R_n(s)| = |r - s| \leq 2 \delta < \eps$.

By Lemmas \ref{badevents} and \ref{A1lem} with $\delta$ in place of $\eps_n$ and $y$ in place of $z$, we have
\begin{align}\label{endjump}
\limsup_{n \rightarrow \infty} P \big( R_n(s) \neq R_n(s-&) \mbox{ for some }s \in [0, \delta] \cup [t - \delta, t] \big) \nonumber \\
&\leq \limsup_{n \rightarrow \infty} P(R_n(0) \leq y + t) + 2 e^{-y} \delta + 2e^{-y} \delta < \frac{\eps}{3}.
\end{align}
Note that if $\tau_{j,n} - \tau_{j-1,n} \leq \delta$ for some $j$ such that $\tau_{j,n} \leq t$, then there exists a nonnegative integer $k \leq t/\delta - 1$ such that $k\delta \leq \tau_{j-1,n} < \tau_{j,n} \leq \min\{t, (k+2)\delta\}$.  For two jumps of the process $(R_n(s), s \geq 0)$ to fall within the interval $[k \delta, \min\{t, (k+2)\delta\}]$, one of the following four events must occur:
\begin{itemize}
\item We have $R_n(s) \leq y$ for some $s \in [0,t]$.

\item Between times $k\delta$ and $(k+2)\delta$, there is a large cut, and one of the new edges attaches to the root and is assigned a label greater than $\log \log n + y$.

\item Between times $k\delta$ and $(k+2)\delta$, more than $3 \delta n \log n$ vertices are removed from the tree during small cuts.

\item Of the first $\lfloor 3 \delta n \log n \rfloor$ vertices, after time $k \delta$, that are removed from the tree during small cuts, two or more reattach to the root with new edge labels greater than $\log \log n + y$.
\end{itemize}

We have already bounded the probability of the first event by $\eps/6$.  The other three events depend on $k$.  The probability of the second event tends to zero as $n \rightarrow \infty$ by (\ref{PA2}) with $2 \delta$ in place of $\eps_n$ and $y$ in place of $z$.  To bound the probability of the third event, note that mergers in the Bolthausen-Sznitman coalescent in which $k-1$ blocks are lost occur at rate $\binom{n}{k} \lambda_{n,k} = n/(k(k-1))$.  Therefore, if $N$ denotes the number of vertices removed during small cuts between times $k \delta$ and $(k+2) \delta$, we have 
$$E[N] = 2 \delta \sum_{k=2}^{\lceil n/(\log n)^{1/2} \rceil} (k-1) \cdot \frac{n}{k(k-1)} = 2 \delta \sum_{k=2}^{\lceil n/(\log n)^{1/2} \rceil} \frac{n}{k} \leq 2 \delta n \log n$$ and
$$\mbox{Var}(N) = 2 \delta \sum_{k=2}^{\lceil n/(\log n)^{1/2} \rceil} (k-1)^2 \cdot \frac{n}{k(k-1)} \leq 2 \delta n^2.$$  Therefore, by Chebyshev's Inequality, $$P(N > 3 \delta n \log n) \leq P(|N - E[N]| > \delta n \log n) \leq \frac{2 \delta n^2}{\delta^2 n^2 (\log n)^2} \rightarrow 0$$ as $n \rightarrow \infty$.  Finally, concerning the fourth event, note that when a vertex is reattached after being removed during a small cut, the probability that it reattaches to the root is at most $2/n$, and the probability that it is assigned a label greater than $\log \log n + y$ is $e^{-y}/(\log n)$.  Therefore, since there are at most $(3 \delta n \log n)^2/2$ pairs of vertices to consider, the probability of the fourth event for a particular $k$ is at most $$\frac{(3 \delta n \log n)^2}{2} \cdot \frac{4 e^{-2y}}{(n \log n)^2} \leq 18 \delta^2 e^{-2y}.$$  Since there are at most $t/\delta$ possible values of $k$ to consider, the probability that the fourth event occurs for some $k$ is at most $18t \delta e^{-2 y} \leq \eps/2$.  Therefore,
$$\limsup_{n \rightarrow \infty} P(\tau_{j,n} - \tau_{j-1,n} \leq \delta \mbox{ for some }j\mbox{ such that }\tau_{j,n} \leq t) \leq \frac{\eps}{6} + \frac{\eps}{2} \leq \frac{2 \eps}{3}.$$  Combining this result with (\ref{endjump}) gives (\ref{wcond}) and completes the proof of Theorem \ref{MRCAthm}.

\section{Proof of Theorem \ref{numblocksthm}}

We obtain Theorems \ref{lengththm} and \ref{numblocksthm} using a very different approach.  Rather than using recursive trees, we couple the population model with a family of stable processes by constructing both from a Poisson process.  We describe this construction in section \ref{constsec}.  We then prove Theorem \ref{numblocksthm} in section \ref{numsec}, and we prove Theorem \ref{lengththm} in section \ref{lengthsec}.

Throughout the rest of the paper, $T > 0$ will be an arbitrary positive constant, and $$T_n = 2 \log \log n.$$  Also, $\rightarrow_p$ will denote convergence in probability as $n \rightarrow \infty$.

\subsection{A Poisson process construction}\label{constsec}

Fix a positive integer $n$.  Let $\Psi$ be a Poisson point process on $\R \times (0, \infty)$ whose intensity measure is given by $dt \times y^{-2} \: dy$.  Then define $\Theta$ to be the image of $\Psi$ under the map $(t, y) \mapsto (-t/\log n, y/\log n)$, restricted to $\R \times (0,1]$.  That is, if $(t, y)$ is a point of $\Psi$ with $y \leq \log n$, then $(-t/\log n, y/\log n)$ is a point of $\Psi$.  Note that $\Theta$ is a Poisson point process on $\R \times (0, 1]$ with intensity measure $dt \times y^{-2} \: dy$.

We now construct a population model consisting of $n$ individuals labelled $1, \dots, n$.  We independently attach to each point $(t_i, y_i)$ of $\Theta$ independent random variables $U_{i,1}, \dots, U_{i,n}$, each having the uniform distribution on $(0,1)$.  If zero or one of the random variables $U_{i,1}, \dots, U_{i,n}$ is less than $y_i$, then there is no change in the population at time $t_i$.  However, if $k \geq 2$ of the random variables $U_{i,1}, \dots, U_{i,n}$ are less than $y_i$ and the $k$ smallest of these random variables are $U_{i, j_1} < \dots < U_{i, j_k}$, then at time $t_i$, the individuals labeled $j_2, \dots, j_k$ are killed, and the individual labeled $j_1$ gives birth to $k-1$ new offspring, which assume the labels $j_2, \dots, j_k$.

To see that this is equivalent to the population model described in the introduction, note that if $(t_i, y_i)$ is a point of $\Theta$, the probability that exactly $k$ new offspring are born at time $t_i$ is $\binom{n}{k+1} y_i^{k+1} (1 - y_i)^{n-k-1}$.  Thus, the rate of events in which exactly $k$ new offspring are born is $$\binom{n}{k+1} \int_0^1 y^{k+1} (1-y)^{n-k-1} \cdot y^{-2} \: dy = \frac{n}{k(k+1)},$$ which matches (\ref{popmodel}) because changes in the population occur at rate $n-1$.

For $s \in \R$ and $t \geq 0$, let $N_n(s, t)$ denote the number of individuals in the population immediately before time $s - t$ who have a descendant alive in the population at time $s$.  That is, $N_n(s,t)$ is the number of ancestral lines remaining after time $t$ if we trace back the ancestral lines of the individuals in the population at time $s$.  Note that because we consider the population immediately before time $s-t$ rather than exactly at time $s-t$ when defining $N_n(s,t)$, the process $(N_n(s,t), t \geq 0)$ is right continuous.  Also, note that $N(s,0) = n$ as long as there is no change in the population at time $s$, but if $k$ individuals are killed at time $s$ and replaced by new offspring, then $N(s,0) = n - k$.  Let $N_n(t) = N_n(0,t)$.  Because the genealogy of this population is given by the Bolthausen-Sznitman coalescent started with $n$ blocks, the process $(N_n(t), t \geq 0)$ has the same law as the process defined in the statement of Theorem \ref{numblocksthm}.  Let
\begin{equation}\label{Lnkappa}
L_n(s) = \int_0^{\infty} N_n(s,t) {\bf 1}_{\{N_n(s,t) > 1\}} \: dt.
\end{equation}
Then $L_n(s)$ is the total branch length for the coalescent tree representing the genealogy of the population at time $s$, so $(L_n(s), s \geq 0)$ has the same law as the process considered in Theorem \ref{lengththm}.

Next, we use the Poisson process $\Psi$ to construct, for each $s \in \R$, a stable process $(S(s,t), t \geq 0)$ with characteristic exponent
\begin{align}\label{stablechf}
E[e^{iuS(s,t)}] &= \exp \bigg( - \frac{\pi t}{2} |u| + i t u \log |u| \bigg) \nonumber \\
&= \exp \bigg( itu(1 - \gamma) - t\int_{-\infty}^0 \big(1 - e^{iux} + iux {\bf 1}_{\{|x| \leq 1\}} \big) x^{-2} \: dx \bigg),
\end{align}
where $\gamma$ denotes Euler's constant.
Because $\Psi$ has only countably many points, we can enumerate the points of $\Psi$ as $(s_j, x_j)_{j=1}^{\infty}$.  If $s_j > -s$, then the process $(S(s,t), t \geq 0)$ will have a jump of size $-x_j$ at time $s_j + s$.  To make this construction precise, we use a standard approximation procedure that is described, for example, in Section I.1 of \cite{bertoin}.  Define $$\eps_n = e^{-\sqrt{\log n}},$$ which implies that for all $a > 0$, $$\lim_{n \rightarrow \infty} (\log n)^a \eps_n = 0$$ and $$\lim_{n \rightarrow \infty} n^a \eps_n = \infty.$$  Let $R_n(s, t)$ be the set of all $j$ such that $-s < s_j \leq -s + t$ and $x_j > \eps_n$.  For all $s \in \R$ and $t \geq 0$, let
\begin{equation}\label{Sndef}
S_n(s, t) = t(1 - \gamma - \log \eps_n) - \sum_{j \in R_n(s,t)} x_j.
\end{equation}
Note that $\int_{-\infty}^0 x {\bf 1}_{\{\eps_n < |x| \leq 1\}} \: x^{-2} \: dx = \log \eps_n$.  Therefore, for any fixed $T > 0$ and any integers $m,n > N$, the proof of Theorem 1 in Section I.1 of \cite{bertoin} (see the bottom of p. 14) gives
\begin{equation}\label{Esup}
E \bigg[ \sup_{0 \leq t \leq T} (S_n(s,t) - S_m(s,t))^2 \bigg] \leq 4T \int_0^{\infty} x^2 {\bf 1}_{\{|x| \leq \eps_N\}} \: x^{-2} \: dx \leq 4 T \eps_N.
\end{equation}
It follows that for each $s \in \R$, there is a limit process $(S(s,t), t \geq 0)$ satisfying (\ref{stablechf}) such that for each fixed $T > 0$,
\begin{equation}\label{Esup2}
E \bigg[ \sup_{0 \leq t \leq T} (S_n(s,t) - S(s,t))^2 \bigg] \leq 4T\eps_n \rightarrow 0
\end{equation}
as $n \rightarrow \infty$.  Let $S_n(t) = S_n(0,t)$ and $S(t) = S(0,t)$ for all $t \geq 0$.

\subsection{Bounds on stable processes}

The three lemmas below collect some bounds on these stable processes that will be needed later.

\begin{Lemma}\label{Slem}
Fix $\eps > 0$.  Then there exists $K > 0$ such that
\begin{equation}\label{etS}
P \bigg( \sup_{0 \leq s \leq T} \sup_{t \geq 0} \: e^{-t} |S(s,t)| > K \bigg) < \eps.
\end{equation}
Additionally, we have
\begin{equation}\label{supS}
\lim_{n \rightarrow \infty} P \bigg( \sup_{0 \leq s \leq T} \sup_{0 \leq t \leq T_n + s} |S(s,t)| > 5 (\log \log n)^2 \bigg) = 0.
\end{equation}
Also,
\begin{equation}\label{STn}
\lim_{n \rightarrow \infty} P(|S(T_n)| \leq T_n^2) = 1
\end{equation}
and
\begin{equation}\label{Stail}
\sup_{0 \leq s \leq T} \int_{T_n+s}^{\infty} e^{-t}S(s,t) \: dt \rightarrow_p 0.
\end{equation}
\end{Lemma}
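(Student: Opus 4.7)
The plan is to introduce a two-sided $1$-stable L\'evy process $\tilde S:\R\to\R$ with drift such that $S(s,t)=\tilde S(t-s)-\tilde S(-s)$; this representation is immediate from (\ref{Sndef}) since the points of $\Psi$ in $(-s,-s+t]$ driving $S(s,\cdot)$ form a single L\'evy increment. Setting $M:=\sup_{u\in[-T,0]}|\tilde S(u)|$, which is a.s.\ finite because $\tilde S$ is c\`adl\`ag, the triangle inequality gives $|S(s,t)|\le|\tilde S(t-s)|+M$ uniformly in $s\in[0,T]$, so each of the four assertions reduces to an estimate on $|\tilde S|$ over an explicit range. The key input throughout is the scaling identity $\tilde S(t)\stackrel{d}{=}t(X-\log t)$, where $X$ has characteristic function $\exp(-\pi|u|/2+iu\log|u|)$ so that $P(|X|>y)\le C/y$; this yields $P(|\tilde S(t)|>y)\le Ct/y$ for $y$ large compared to $t\log t$.

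For (\ref{etS}) I would decompose $[0,\infty)=\bigcup_{k\ge 0}[k,k+1]$ and bound $P(\sup_{u\in[k,k+1]}|\tilde S(u)|>Ke^k/2)$ by $P(|\tilde S(k)|>Ke^k/4)+P(W_k>Ke^k/4)$, where $W_k:=\sup_{u\in[0,1]}|\tilde S(u+k)-\tilde S(k)|\stackrel{d}{=}\sup_{u\in[0,1]}|\tilde S(u)|$. Both tails decay at rate $O(ke^{-k}/K)$, the resulting sum is summable, and taking $K$ large makes $P(\sup_{u\ge 0}e^{-u}|\tilde S(u)|>K)$ arbitrarily small; combined with the bound on $M$ this proves (\ref{etS}). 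For (\ref{supS}) I would invoke a standard L\'evy maximal inequality of the form $P(\sup_{u\le L}|\tilde S(u)|>y)\le C_0P(|\tilde S(L)|>y/C_0)$, applied with $L=T_n+T$ and $y$ slightly less than $5(\log\log n)^2$, giving a bound of order $L/y=O(1/\log\log n)\to 0$. The estimate (\ref{STn}) is immediate: the scaling identity gives $|S(T_n)|\le T_n|X|+T_n\log T_n\le T_n^2$ once $|X|\le T_n-\log T_n$, an event of probability tending to $1$.

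The main obstacle is (\ref{Stail}). Partitioning $[T_n+s,\infty)$ into unit intervals yields
$$\sup_{0\le s\le T}\left|\int_{T_n+s}^{\infty}e^{-t}S(s,t)\,dt\right|\le\frac{e^{-T_n}M}{1-e^{-1}}+e^{-T_n}\sum_{k=0}^{\infty}e^{-k}Z_k,$$
where $Z_k:=\sup_{u\in[T_n+k,T_n+k+1]}|\tilde S(u)|$. Since $E[Z_k]=\infty$ I would introduce the truncation $\bar Z_k:=Z_k\wedge(T_n+k)^2e^{k/2}$: the tail bound $P(Z_k>a)\le C(T_n+k)/a$ gives $\sum_k P(Z_k\ne\bar Z_k)\le(C/T_n)\sum_k e^{-k/2}\to 0$, while the standard truncated-first-moment bound $E[|Y|\wedge a]\le C\sigma(1+\log(a/\sigma))$ for a $1$-stable variable of scale $\sigma$ yields $E[\bar Z_k]\le C(T_n+k)(1+\log(T_n+k)+k)$ and hence $e^{-T_n}\sum_k e^{-k}E[\bar Z_k]=O(e^{-T_n}T_n\log T_n)\to 0$. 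Markov's inequality applied to the truncated sum, together with the summable exception probabilities, delivers (\ref{Stail}).
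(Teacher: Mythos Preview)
Your argument is correct, but the paper's proof is much shorter because it trades all of your quantitative tail estimates for a single almost-sure input. The paper invokes Proposition~48.10 of \cite{sato}, which gives $\limsup_{t\to\infty}t^{-\alpha}|S(t)|=0$ a.s.\ for every $\alpha>1$; hence there is an a.s.\ finite random time $U$ with $|S(t)|\le t^2$ for all $t\ge U$. Writing $L=\sup_{0\le t\le U}|S(t)|$ and $M=\sup_{0\le t\le T}|S(T,t)|$ (this $M$ plays the same role as your $M$), the same increment relations you use yield the deterministic-looking pointwise bound $|S(s,t)|\le 2M+L+t^2$ for all $s\in[0,T]$ and $t\ge 0$. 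From this, each of the four claims is a one-line consequence: (\ref{etS}) because $e^{-t}t^2\le 1$; (\ref{supS}) because $2M+L+(T_n+T)^2\le 5(\log\log n)^2$ with probability tending to one; (\ref{STn}) directly from $U<\infty$; and (\ref{Stail}) because $\int_{T_n}^\infty e^{-t}t^2\,dt\to 0$.

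Your route replaces that a.s.\ growth bound with the scaling identity $S(t)\stackrel{d}{=}t(X-\log t)$ and the $1$-stable tail $P(|X|>y)\le C/y$, and then establishes each statement separately via a dyadic decomposition and Borel--Cantelli for (\ref{etS}), a L\'evy maximal inequality for (\ref{supS}), and a truncated-first-moment argument for (\ref{Stail}). All of these steps are valid, and the approach is self-contained in the sense that it does not appeal to an external law-of-large-numbers result for stable processes. But it is substantially longer: the paper's key observation is that once the a.s.\ envelope $|S(t)|\le t^2$ for large $t$ is available, the lemma reduces to elementary calculus, so there is no need for truncation or maximal inequalities at all.
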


\begin{proof}
The process $(S(t), t \geq 0)$ is a stable process of index $1$.  Therefore, by Proposition 48.10 of \cite{sato}, we have $$\limsup_{t \rightarrow \infty} t^{-\alpha}|S(t)| = 0$$ almost surely for all $\alpha > 1$.  Therefore, there exists a random time $U$ with $P(U < \infty) = 1$ such that $|S(t)| \leq t^2$ for all $t \geq U$.  This implies (\ref{STn}).

Let $L = \sup_{0 \leq t \leq U} |S(t)|$ and $M = \sup_{0 \leq t \leq T} |S(T,t)|$.  Note that $L < \infty$ and $M < \infty$ almost surely.  Suppose $0 \leq s \leq T$.  By the construction, if $0 \leq t \leq s$ then
\begin{equation}\label{Sst1}
S(s,t) = S(T, T - s + t) - S(T, T-s),
\end{equation}
while if $t \geq s$ then
\begin{equation}\label{Sst2}
S(s,t) = S(s,s) + S(0, t-s) = S(T,T) - S(T,T-s) + S(0, t-s).
\end{equation}
Therefore,
\begin{equation}\label{Sstbd}
|S(s,t)| \leq 2M + L + t^2.
\end{equation}
Since $e^{-t} t^2 \leq 1$ for all $t \geq 0$, it follows that $$\sup_{0 \leq s \leq T} \sup_{t \geq 0} \: e^{-t} |S(s,t)| \leq 2M + L + 1.$$  Now choose $K$ large enough that $P(2M + L + 1 > K) < \eps$ to obtain (\ref{etS}).
Furthermore, (\ref{Sstbd}) implies that
$$\sup_{0 \leq s \leq T} \sup_{0 \leq t \leq T_n + s} |S(s,t)| \leq 2M + L + (2 \log \log n + T)^2,$$ and (\ref{supS}) follows because $P(2M + L + (2 \log \log n + T)^2 > 5 (\log \log n)^2) \rightarrow 0$ as $n \rightarrow \infty$.  On the event that $2M + L \leq T_n^2$, which has probability tending to one as $n \rightarrow \infty$, equation (\ref{Sstbd}) gives that for $0 \leq s \leq T$,
$$\int_{T_n+s}^{\infty} e^{-t}|S(s,t)| \: dt \leq 2 \int_{T_n}^{\infty} e^{-t} t^2 \: dt \rightarrow 0$$ as $n \rightarrow \infty$, which gives (\ref{Stail}).
\end{proof}

\begin{Lemma}\label{supSnS}
We have $$\sup_{0 \leq s \leq T} \sup_{0 \leq t \leq T_n + s} |S_n(s,t) - S(s,t)| \rightarrow_p 0.$$
\end{Lemma}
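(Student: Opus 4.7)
The plan is to reduce the uniform control over the two-parameter family $(s,t) \mapsto S_n(s,t) - S(s,t)$ to the one-parameter $L^2$-bound already established in (\ref{Esup2}), by exploiting the same splitting identities that were used to derive (\ref{Sstbd}) in the proof of Lemma \ref{Slem}. The key observation is that those identities hold verbatim for the approximating processes $S_n$, because $S_n(s,t)$ is constructed from the same Poisson process $\Psi$ with only a shift by $-s$ in time, a truncation to jumps of size exceeding $\eps_n$, and an additive linear compensator $t(1 - \gamma - \log \eps_n)$.

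Concretely, first I would verify from the definition (\ref{Sndef}) that for $0 \leq s \leq T$ one has
$$S_n(s,t) = S_n(T, T - s + t) - S_n(T, T - s) \qquad \text{for } 0 \leq t \leq s,$$
$$S_n(s,t) = S_n(T, T) - S_n(T, T - s) + S_n(0, t - s) \qquad \text{for } s \leq t.$$
These follow by checking that the jump-index sets $R_n(s,t)$ telescope correctly (e.g.\ $R_n(T, T - s + t) \setminus R_n(T, T - s) = R_n(s, t)$ for $t \leq s$) and that the deterministic drift $t(1 - \gamma - \log \eps_n)$ is additive in $t$.

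Subtracting these identities from the analogous (\ref{Sst1})--(\ref{Sst2}) for $S$ yields the pointwise bound
$$\sup_{0 \leq s \leq T} \sup_{0 \leq t \leq T_n + s} |S_n(s,t) - S(s,t)| \leq 2 \sup_{0 \leq u \leq T} |S_n(T,u) - S(T,u)| + \sup_{0 \leq u \leq T_n} |S_n(0,u) - S(0,u)|.$$
Each term on the right is a supremum over a deterministic time interval for a fixed starting time ($s = T$ or $s = 0$), so each is governed directly by (\ref{Esup2}). Inspection of the proof leading to (\ref{Esup2}) (via the uniform martingale bound (\ref{Esup})) shows that the estimate $E[\sup_{0 \leq t \leq \tau}(S_n(s,t) - S(s,t))^2] \leq 4\tau \eps_n$ is valid for any deterministic horizon $\tau > 0$, including $\tau = T_n$. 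Thus the first term is $O_{L^2}(\sqrt{T \eps_n})$ and the second is $O_{L^2}(\sqrt{T_n \eps_n}) = O_{L^2}((\log \log n)^{1/2} e^{-\sqrt{\log n}/2})$; both vanish, so Markov's inequality concludes.

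The only genuinely new step is the bookkeeping needed to establish the two splitting identities for $S_n$; after that the argument is essentially mechanical. I do not anticipate a real obstacle, since the choice $\eps_n = e^{-\sqrt{\log n}}$ was made precisely so that $T_n \eps_n \to 0$ even though $T_n \to \infty$.
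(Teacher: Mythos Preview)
Your proposal is correct and follows essentially the same approach as the paper's proof: both verify that the splitting identities (\ref{Sst1})--(\ref{Sst2}) hold for $S_n$ as well as for $S$, subtract to obtain the same pointwise bound by $2\sup_{0\leq u\leq T}|S_n(T,u)-S(T,u)| + \sup_{0\leq u\leq T_n}|S_n(0,u)-S(0,u)|$, and then apply (\ref{Esup2}) with horizons $T$ and $T_n$ respectively (the paper combines them via $(a+b)^2\leq 2a^2+2b^2$ into a single $L^2$ estimate $32T\eps_n+8T_n\eps_n$, but this is an inessential packaging difference).
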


\begin{proof}
Suppose $0 \leq s \leq T$.  By (\ref{Sst1}), which holds also with $S_n$ in place of $S$, if $0 \leq t \leq s$ then
\begin{align}
|S_n(s,t) - S(s,t)| &= |S_n(T, T - s + t) - S_n(T, T-s) - S(T, T-s+t) + S(T, T-s)| \nonumber \\
&\leq 2 \sup_{0 \leq u \leq T} |S_n(T,u) - S(T,u)|. \nonumber
\end{align}
By (\ref{Sst2}), if $s \leq t \leq T_n + s$, then
\begin{align}
|S_n&(s,t) - S(s,t)| \nonumber \\
&= |S_n(T,T) - S_n(T, T-s) + S_n(0, t-s) - S(T,T) + S(T,T-s) - S(0, t-s)| \nonumber \\
&\leq 2 \sup_{0 \leq u \leq T} |S_n(T,u) - S(T,u)| + \sup_{0 \leq u \leq T_n} |S_n(0,u) - S(0,u)|. \nonumber
\end{align}
Using that $(a+b)^2 \leq 2a^2 + 2b^2$  and applying (\ref{Esup2}), we get
$$E \bigg[ \bigg( \sup_{0 \leq s \leq T} \sup_{0 \leq t \leq T_n + s} |S_n(s,t) - S(s,t)| \bigg)^2 \bigg] \leq 32 T \eps_n + 8 T_n \eps_n \rightarrow 0$$ as $n \rightarrow \infty$, which implies the result.
\end{proof}

\begin{Lemma}\label{Sntheta}
Suppose $s \in \R$ and $(\delta_n)_{n=1}^{\infty}$ is a sequence of numbers with $\delta_n \sqrt{\log n} \rightarrow 0$ as $n \rightarrow \infty$.  Let $\theta > 0$, and let $A(\theta)$ be the event that there are no points of $\Psi$ in $[-s, -s + \delta_n] \times [\theta, \infty)$.  Then for sufficiently large $n$, $$P \bigg( \sup_{0 \leq t \leq \delta_n} |S_n(s, t)| \geq \theta^{1/3} \bigg| A(\theta) \bigg) \leq 4 \delta_n \theta^{1/3}.$$
\end{Lemma}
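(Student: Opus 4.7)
The plan is to condition on $A(\theta)$, decompose $S_n(s,t)$ into a small deterministic drift plus a mean-zero martingale in $t$, and then apply Doob's $L^2$ maximal inequality together with Chebyshev's inequality.

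First I would use the fact that $A(\theta)$ is measurable with respect to the restriction of $\Psi$ to the region $[-s,-s+\delta_n]\times[\theta,\infty)$, so by independence of disjoint parts of a Poisson point process, the restriction of $\Psi$ to the complementary strip $(-s,-s+\delta_n]\times(\eps_n,\theta)$ remains a Poisson point process of intensity $dt\times y^{-2}\,dy$ under $P(\cdot\mid A(\theta))$. Under $A(\theta)$ every point contributing to the sum in (\ref{Sndef}) has height below $\theta$, so for $0\le t\le\delta_n$,
$$S_n(s,t) = t(1-\gamma-\log\eps_n) - Y(t), \qquad Y(t) := \sum_j x_j\,\mathbf{1}_{\{-s<s_j\le-s+t,\ \eps_n<x_j<\theta\}}.$$
Centering $Y$ around its conditional mean $t\log(\theta/\eps_n)$ gives the clean decomposition
$$S_n(s,t) = -M(t) + t(1-\gamma-\log\theta), \qquad M(t) := Y(t) - t\log(\theta/\eps_n),$$
where under $P(\cdot\mid A(\theta))$ the process $M$ is a compensated compound-Poisson martingale in $t$, with predictable quadratic variation $t(\theta-\eps_n)$ and hence $E[M(\delta_n)^2\mid A(\theta)]=\delta_n(\theta-\eps_n)\le\delta_n\theta$.

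Then I would invoke Doob's $L^2$ maximal inequality, which yields $E[\sup_{0\le t\le\delta_n}M(t)^2\mid A(\theta)]\le 4\delta_n\theta$, and apply Chebyshev's inequality to conclude
$$P\Bigl(\sup_{0\le t\le\delta_n}|M(t)|\ge r\,\Big|\,A(\theta)\Bigr) \le \frac{4\delta_n\theta}{r^2}.$$
Because $\theta$ is fixed and the hypothesis $\delta_n\sqrt{\log n}\to 0$ in particular forces $\delta_n\to 0$, the drift satisfies $\delta_n|1-\gamma-\log\theta|\to 0$. Therefore for sufficiently large $n$, the event $\{\sup_t|S_n(s,t)|\ge\theta^{1/3}\}$ is contained in $\{\sup_t|M(t)|\ge\theta^{1/3}(1-o(1))\}$; plugging $r=\theta^{1/3}(1-o(1))$ into the Chebyshev bound and absorbing the $(1-o(1))^{-2}$ factor into the ``sufficiently large $n$'' allowance yields the required $4\delta_n\theta^{1/3}$.

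The main point requiring care is the preservation of the Poisson law inside the strip $(-s,-s+\delta_n]\times(\eps_n,\theta)$ under conditioning on $A(\theta)$; once that is in place the martingale bookkeeping and maximal-inequality steps are routine. The role of the scaling assumption $\delta_n\sqrt{\log n}\to 0$ is solely to ensure that the linear drift term $t(1-\gamma-\log\theta)$ becomes negligible on the time scale $[0,\delta_n]$ relative to the threshold $\theta^{1/3}$, which is what lets us pass from a bound on $\sup|M|$ to the desired bound on $\sup|S_n|$.
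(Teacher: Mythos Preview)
Your approach is essentially correct and is a legitimate alternative to the paper's argument, but there is one small slip in the final step and one misattribution of the hypothesis.

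The paper does \emph{not} pass to a martingale or invoke Doob. It simply uses the triangle inequality and the monotonicity of both pieces of $S_n$: since the drift $t(1-\gamma-\log\eps_n)$ is nonnegative and increasing and $Y(t)=\sum_j x_j$ is nonnegative and increasing, one has $\sup_{0\le t\le\delta_n}|S_n(s,t)|\le \delta_n(1-\gamma-\log\eps_n)+Y(\delta_n)$. The drift is at most $\theta^{1/3}/4$ for large $n$ (this is exactly where $\delta_n\sqrt{\log n}\to 0$ is used, since $-\log\eps_n=\sqrt{\log n}$), and the conditional mean of $Y(\delta_n)$ is $\delta_n(\log\theta-\log\eps_n)\to 0$, so for large $n$ the event $\{\sup|S_n|\ge\theta^{1/3}\}$ forces $|Y(\delta_n)-E[Y(\delta_n)]|\ge\theta^{1/3}/2$. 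A single Chebyshev step with $\mathrm{Var}(Y(\delta_n)\mid A(\theta))\le\delta_n\theta$ then gives exactly $4\delta_n\theta^{1/3}$.

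Your route---centering $Y$ so that the residual drift becomes $t(1-\gamma-\log\theta)$, then Doob plus Chebyshev---also works, and in fact shows that only $\delta_n\to 0$ (not $\delta_n\sqrt{\log n}\to 0$) is needed once you center; so your remark that the hypothesis ``solely'' controls the drift $t(1-\gamma-\log\theta)$ is not quite right. More importantly, your last sentence is wrong as stated: the factor $(1-o(1))^{-2}$ is strictly greater than $1$ for every finite $n$ and multiplies the $n$-dependent quantity $\delta_n$, so it cannot be ``absorbed'' into a sufficiently-large-$n$ statement to yield exactly $4\delta_n\theta^{1/3}$. Your argument gives $4(1+o(1))\delta_n\theta^{1/3}$, i.e.\ any constant larger than $4$, which would be perfectly adequate for the applications in the paper but is not literally the inequality claimed. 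The paper's monotonicity trick avoids Doob's factor of $4$ and thereby hits the constant $4$ on the nose.
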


\begin{proof}
By (\ref{Sndef}), we have $$\sup_{0 \leq t \leq \delta_n} |S(s,t)| \leq \delta_n(1 - \gamma - \log \eps_n) + \sum_{j \in R_n(s,t)} x_j.$$  Because $|\log \eps_n| = \sqrt{\log n}$ and $\delta_n \sqrt{\log n} \rightarrow 0$, we have $\delta_n (1 - \gamma - \log \eps_n) \leq \theta^{1/3}/4$ for sufficiently large $n$.  Furthermore, $$E \bigg[ \sum_{j \in R_n(s,t)} x_j \bigg| A(\theta) \bigg] = \delta_n \int_{\eps_n}^{\theta} x \cdot x^{-2} \: dx = \delta_n (\log \theta - \log \eps_n) \rightarrow 0$$ as $n \rightarrow \infty$ and $$\mbox{Var} \bigg( \sum_{j \in R_n(s,t)} x_j \bigg| A(\theta) \bigg) = \delta_n \int_{\eps_n}^{\theta} x^2 \cdot x^{-2} \: dx \leq \delta_n \theta.$$  Therefore, by Chebyshev's Inequality, for sufficiently large $n$, 
\begin{align}
P \bigg( \sup_{0 \leq t \leq \delta_n} |S_n(s, t)| \geq \theta^{1/3} \bigg| A(\theta) \bigg) &\leq P \bigg( \sum_{j \in R_n(s,t)} x_j \geq \frac{3 \theta^{1/3}}{4} \bigg| A(\theta) \bigg) \nonumber \\
&\leq P \bigg( \bigg| \sum_{j \in R_n(s,t)} x_j - E \bigg[ \sum_{j \in R_n(s,t)} x_j \bigg| A(\theta) \bigg] \bigg| \geq \frac{\theta^{1/3}}{2} \bigg| A(\theta) \bigg) \nonumber \\
&\leq \delta_n \theta \bigg( \frac{4}{\theta^{2/3}} \bigg),
\end{align}
which gives the result.
\end{proof}

\subsection{Rate of decrease in the number of blocks}

We record here some results about the rate at which the number of blocks decreases in the Bolthausen-Sznitman coalescent.  Recall that the process $(N_n(t), t \geq 0)$ tracks the evolution of the number of blocks over time.  Because the number of blocks decreases by $k-1$ whenever $k$ blocks merge into one, the rate at which the number of blocks is decreasing when there are $b$ blocks is
\begin{equation}\label{etab}
\eta(b) = \sum_{k=2}^b (k-1) \binom{b}{k} \lambda_{b,k}.
\end{equation}
Considering the process from the perspective of the construction in subsection \ref{constsec}, suppose there is a point $(-t,y)$ in the Poisson process $\Theta$.  If $N_n(t-) = b$, then $N_n(t-) - N_n(t)$ is one less than the number out of $b$ independent uniformly distributed random variables that are less than or equal to $y$, unless all of the random variables are greater than $y$ in which case $N_n(t-) - N_n(t) = 0$.  Therefore, the expected decrease in the process $N_n$ at time $t$ is $by - 1 + (1-y)^b$.  It follows that $$\eta(b) = \int_0^1 \big( by - 1 + (1-y)^b \big) y^{-2} \: dy.$$

We will be interested also in the process obtained by removing from $\Theta$ the points whose second coordinate exceeds $\eps_n/\log n$.  In this case, the genealogy of the population is given not by the Bolthausen-Sznitman coalescent but by a different coalescent process in which the largest merger events are suppressed.  In particular, when there are $b$ blocks, the rate at which $k$ blocks merge into one is given by
\begin{equation}\label{lambprime}
\lambda_{b,k}^* = \int_0^{\eps_n/\log n} x^{k-2} (1-x)^{b-k} \: dx
\end{equation}
and the rate at which the number of blocks is decreasing is given by $$\eta^*(b) = \sum_{k=2}^b (k-1) \binom{b}{k} \lambda_{b,k}^* = \int_0^{\eps_n/\log n} \big( by - 1 + (1-y)^b \big) y^{-2} \: dy.$$  Also, let
\begin{equation}\label{vstar}
v^*(b) = \sum_{k=2}^b (k-1)^2 \binom{b}{k} \lambda_{b,k}^*.
\end{equation}

\begin{Lemma}
For all positive integers $b$ and $n$ with $2 \leq b \leq n$, we have
\begin{equation}\label{meancomp}
\big| \eta^*(b) - b(\log b - \log \log n + \log \eps_n + \gamma - 1) \big| \leq \frac{\log n}{\eps_n} + 1,
\end{equation}
where $\gamma$ denotes Euler's constant.  Also,
\begin{equation}\label{vcomp}
v^*(b) \leq \frac{b^2 \eps_n}{\log n}.
\end{equation}
\end{Lemma}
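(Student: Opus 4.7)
The plan is to prove (\ref{vcomp}) first, since it is essentially algebraic, and then handle (\ref{meancomp}) by computing the discrepancy $\eta(b) - \eta^*(b)$ explicitly.

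For (\ref{vcomp}), I would use that $(k-1)^2 \leq k(k-1)$ for $k \geq 2$ to obtain
$$v^*(b) \leq \sum_{k=2}^b k(k-1) \binom{b}{k} \lambda_{b,k}^* = \int_0^{\eps_n/\log n} \sum_{k=2}^b k(k-1) \binom{b}{k} y^{k-2}(1-y)^{b-k} \, dy.$$
The identity $k(k-1)\binom{b}{k} = b(b-1)\binom{b-2}{k-2}$ together with the binomial theorem collapses the inner sum to the constant $b(b-1)$, so the integral evaluates to $b(b-1) \cdot \eps_n/\log n \leq b^2 \eps_n / \log n$.

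For (\ref{meancomp}), I would start from
$$\eta(b) - \eta^*(b) = \int_{\eps_n/\log n}^1 (by - 1 + (1-y)^b) y^{-2} \, dy$$
and split the integrand into its three pieces. The first two pieces are elementary, yielding $b\log(\log n/\eps_n)$ and $-(\log n/\eps_n - 1)$ respectively. Meanwhile, the identity $\binom{b}{k}\lambda_{b,k} = b/[k(k-1)]$ established in the proof of Proposition \ref{boszprop} gives the closed form $\eta(b) = \sum_{k=2}^b b/k = b(H_b - 1)$, where $H_b$ is the $b$-th harmonic number. Substituting and rearranging yields
$$\eta^*(b) - b(\log b - \log \log n + \log \eps_n + \gamma - 1) = b(H_b - \log b - \gamma) + \int_{\eps_n/\log n}^1 \bigl[1 - (1-y)^b\bigr] y^{-2} \, dy,$$
where I have used $\log n/\eps_n - 1 = \int_{\eps_n/\log n}^1 y^{-2}\, dy$ to absorb the remaining elementary pieces. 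Both summands on the right are manifestly nonnegative.

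To finish, I would bound the first term by $b(H_b - \log b - \gamma) \leq 1$, which follows from the classical estimate $H_b - \log b - \gamma \leq 1/(2b)$ (a consequence of Euler--Maclaurin), and bound the second term by $\int_{\eps_n/\log n}^1 y^{-2}\, dy = \log n/\eps_n - 1$ using $1 - (1-y)^b \leq 1$. The two bounds add to $\log n/\eps_n$, which is stronger than the claimed $\log n/\eps_n + 1$. There is no substantive obstacle here; the only care required is in keeping track of signs so that the right-hand side of the display above is visibly a sum of two nonnegative quantities, allowing the elementary upper bounds to combine without cancellation.
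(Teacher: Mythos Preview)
Your proof is correct and follows essentially the same route as the paper. For (\ref{vcomp}) both arguments use $(k-1)^2 \leq k(k-1) = 2\binom{k}{2}$ and collapse the sum to $b(b-1)\,\eps_n/\log n$; the paper phrases this probabilistically (expected number of pairs) while you write out the binomial identity, but the computation is identical. For (\ref{meancomp}) both arguments compute $\eta(b) = b(H_b - 1)$, split off $b\log(\eps_n/\log n)$ and the integral $\int_{\eps_n/\log n}^1 (1-(1-y)^b)\,y^{-2}\,dy$, and bound the latter by $\log n/\eps_n$; the only difference is that you invoke $H_b - \log b - \gamma \leq 1/(2b)$ and track signs to get the slightly sharper constant $\log n/\eps_n$, whereas the paper uses a telescoping estimate for $|\eta(b) - b(\log b + \gamma - 1)| \leq 1$ and arrives at $\log n/\eps_n + 1$.
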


\begin{proof}
Using (\ref{etab}) and (\ref{lambform}), we get
$$\eta(b) = b \sum_{k=2}^b \frac{1}{k}.$$
Now $$\lim_{b \rightarrow \infty} \bigg( \sum_{k=2}^b \frac{1}{k} - \log b \bigg) = \gamma - 1$$ and $$0 \leq \big( \log b - \log (b-1) \big) - \frac{1}{b} = \int_{b-1}^b \bigg( \frac{1}{x} - \frac{1}{b} \bigg) \: dx \leq \frac{1}{b-1} - \frac{1}{b},$$
so
\begin{equation}\label{etab2}
\big| \eta(b) - b(\log b + \gamma - 1) \big| \leq b \sum_{k=b+1}^{\infty} \bigg( \frac{1}{k-1} - \frac{1}{k} \bigg) = 1
\end{equation}
for all $b$.  It follows that
\begin{align}\label{etaprimeb}
\eta^*(b) &= \int_0^{\eps_n/\log n} \big( by - 1 + (1-y)^b \big) y^{-2} \: dy. \nonumber \\
&= \int_0^1 \big( by - 1 + (1-y)^b \big) y^{-2} \: dy - \int_{\eps_n/\log n}^1 \big( by - 1 + (1-y)^b \big) y^{-2} \: dy \nonumber \\
&= \eta(b) + b \log \bigg( \frac{\eps_n}{\log n} \bigg) + \int_{\eps_n/\log n}^1  \big(1 - (1-y)^b \big) y^{-2} \: dy.
\end{align}
Because the last integral is bounded by $\int_{\eps_n/\log n}^{\infty} y^{-2} \: dy = (\log n)/\eps_n$, the result (\ref{meancomp}) follows by combining (\ref{etab2}) and (\ref{etaprimeb}).

To prove (\ref{vcomp}), note that if $(-t, y)$ is a point of $\Theta$ and $N_n(t-) = b$, then because $(k-1)^2 \leq 2 \binom{k}{2}$ for $k \geq 2$, the expected square of the decrease in the process $N_n$ at time $t$ is at most twice the expected number of pairs out of $b$ independent uniformly distributed random variables having the property that both random variables are less than or equal to $y$, which is $2 \binom{b}{2} y^2$.  Therefore, $$v^*(b) \leq \int_0^{\eps_n/\log n} 2 \binom{b}{2} y^2 \cdot y^{-2} \: dy \leq \frac{b^2 \eps_n}{\log n},$$ as claimed.
\end{proof}

\subsection{The coupling}\label{numsec}

In this section, we prove Theorem \ref{numblocksthm}.  We use the construction and notation of section \ref{constsec}.  As in the statement of Theorem \ref{numblocksthm}, let
\begin{equation}\label{Xndef}
X_n(t) = \frac{\log n}{n} \bigg( N_n \bigg( \frac{t}{\log n} \bigg) - ne^{-t} - \frac{nte^{-t} \log \log n}{\log n} \bigg)
\end{equation}
for all $t \geq 0$.  Also, let $$Y_n(t) = e^{-t} S_n(t) + \frac{e^{-t} t^2}{2}$$ and $$Y(t) = e^{-t} S(t) + \frac{e^{-t} t^2}{2}.$$  We need to show that the processes $(X_n(t), t \geq 0)$ converge to $(Y(t), t \geq 0)$ in the Skorohod topology.  Because $$E \bigg[ \sup_{0 \leq t \leq T_n} (Y_n(t) - Y(t))^2 \bigg] \leq 4T_n \eps_n \rightarrow 0$$ as $n \rightarrow \infty$ by (\ref{Esup2}), it suffices to show that
\begin{equation}\label{XYprob}
\sup_{0 \leq t \leq T_n} |X_n(t) - Y_n(t)| \rightarrow_p 0.
\end{equation}
In fact, to prove Theorem \ref{numblocksthm}, it would suffice to show (\ref{XYprob}) with the arbitrary fixed constant $T$ in place of $T_n$, but it will be helpful for the proof of Theorem \ref{lengththm} to control the difference between $X_n$ and $Y_n$ up to time $T_n$.

Let $0 < \tau_1 < \dots < \tau_{J_n} < T_n$ be the jump times of the process $(S_n(t), t \geq 0)$ before time $T_n$.  Let $\tau_0 = 0$ and $\tau_{J_n + 1} = T_n$.  Note that the $\tau_i$ depend on $n$ even though we do not record this dependence in the notation.  This means that there are points $(\tau_i, y_i)$ in $\Psi$ with $y_i \geq \eps_n$ for $i = 1, \dots, J_n$.  Also, the process $\Theta$, which is used to construct the population process, contains the points $(-\tau_i/\log n, y_i/\log n)$ but $\Theta$ contains no points in the regions $(-\tau_{i+1}/\log n, -\tau_i/\log n) \times [\eps_n/\log n, 1]$.  Therefore, conditional on $\tau_1, \dots, \tau_{J_n}$, between times $\tau_i/\log n$ and $\tau_{i+1}/\log n$, the process $N_n$ follows the dynamics of the number of blocks in a coalescent process with transition rates given by (\ref{lambprime}).

For $i = 0, 1, \dots, J_n$ and $t \in [0, \tau_{i+1} - \tau_i)$, let
\begin{equation}\label{Mindef}
M_{i,n}(t) = N_n \bigg( \frac{\tau_i + t}{\log n} \bigg) - N_n \bigg( \frac{\tau_i}{\log n} \bigg) + \int_{\tau_i/\log n}^{(\tau_i+t)/\log n} \eta^*(N_n(s)) \: ds.
\end{equation}
By standard results about compensators for Markov jump processes (see, for example, Theorem 9.15 in \cite{kle}), the process $(M_{i,n}(t), 0 \leq t < \tau_{i+1} - \tau_i)$ is a martingale.  Note also that $\tau_{i+1} - \tau_i$ is exponentially distributed with mean $\eps_n$ and is independent of the evolution of the process $M_{i,n}$ before time $\tau_{i+1} - \tau_i$.  Next, for $\tau_j \leq t < \tau_{j+1}$, let
\begin{equation}\label{Mndef}
M_n(t) = \sum_{i=0}^{j-1} M_{i,n}((\tau_{i+1} - \tau_i)-) + M_{j,n}(t - \tau_j).
\end{equation}
Then the process $(M_n(t), 0 \leq t < T_n)$ is a martingale.

\begin{Lemma}\label{martlem}
We have $$P \bigg( \frac{\log n}{n} \sup_{0 \leq t \leq T_n} |M_n(t)| > \frac{1}{\log n} \bigg) \leq 4 (\log n)^2 T_n \eps_n.$$ 
\end{Lemma}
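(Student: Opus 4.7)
The plan is to apply Doob's $L^2$ maximal inequality to the martingale $M_n$, combined with the quadratic-variation bound furnished by (\ref{vcomp}), and then finish with Chebyshev's inequality. The only thing to track carefully is that the process $M_n$ is parametrized on the stable-process time scale, which runs a factor of $\log n$ faster than population time, so the bound (\ref{vcomp}) acquires an extra $1/\log n$ when converted.

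Concretely, I would first note that between consecutive large-atom times $\tau_i/\log n$ and $\tau_{i+1}/\log n$ in population time, the number-of-blocks process $(N_n(s))$ evolves purely by the ``$\lambda^*$-coalescent'' dynamics with rates (\ref{lambprime}); a merger of $k$ blocks decreases $N_n$ by $k-1$ and thus contributes $(k-1)^2$ to the predictable quadratic variation. Hence in population time the predictable quadratic variation of the compensated process accumulates at rate $v^*(N_n(s))$, which by (\ref{vcomp}) and $N_n \leq n$ is at most $n^2 \eps_n/\log n$ deterministically. The parametrization in (\ref{Mindef})--(\ref{Mndef}) is in stable time, whose range $[0,T_n]$ corresponds to population-time range $[0, T_n/\log n]$, so
$$\langle M_n \rangle_{T_n} \leq \frac{T_n}{\log n} \cdot \frac{n^2 \eps_n}{\log n} = \frac{T_n n^2 \eps_n}{(\log n)^2}.$$

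Applying Doob's $L^2$ maximal inequality then yields $E\bigl[\sup_{0 \leq t \leq T_n} M_n(t)^2\bigr] \leq 4 E[\langle M_n \rangle_{T_n}] \leq 4 T_n n^2 \eps_n/(\log n)^2$, and Chebyshev's inequality applied at threshold $n/(\log n)^2$ turns this into
$$P\biggl( \sup_{0 \leq t \leq T_n} |M_n(t)| > \frac{n}{(\log n)^2} \biggr) \leq \frac{(\log n)^4}{n^2} \cdot \frac{4 T_n n^2 \eps_n}{(\log n)^2} = 4 T_n (\log n)^2 \eps_n,$$
which is the asserted bound, since $\frac{\log n}{n} \cdot \frac{n}{(\log n)^2} = \frac{1}{\log n}$. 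The closest thing to an obstacle is essentially bookkeeping: keeping clear which factors of $\log n$ come from the variance bound (\ref{vcomp}), which come from the stable-to-population time change, and which come from the Chebyshev threshold. Each is essential, and all three conspire to give exactly the stated power $(\log n)^2$ in the final estimate.
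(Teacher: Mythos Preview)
Your proposal is correct and matches the paper's proof essentially line for line: bound the predictable quadratic variation by $\int_0^{T_n/\log n} v^*(N_n(s))\,ds \le \dfrac{T_n n^2 \eps_n}{(\log n)^2}$ via (\ref{vcomp}) and $N_n \le n$, apply Doob's $L^2$ maximal inequality, then finish with Markov/Chebyshev. The only cosmetic difference is that the paper phrases the first step as a variance computation rather than a quadratic-variation bound, but these are the same thing here.
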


\begin{proof}
By standard results about compensated Markov jump processes (see, for example, Corollary 9.17 of \cite{kle}),
$$\mbox{Var}(M_n(T_n)) = E \bigg[ \int_0^{T_n/\log n} v^*(N_n(t)) \: dt \bigg],$$ where $v^*$ was defined in (\ref{vstar}). Therefore, by (\ref{vcomp}),
$$\mbox{Var}(M_n(T_n)) \leq \frac{n^2 T_n \eps_n}{(\log n)^2}.$$  By the $L^2$ Maximum Inequality for martingales,
$$E  \bigg[ \sup_{0 \leq t \leq T_n} M_n(t)^2 \bigg] \leq \frac{4 n^2 T_n \eps_n}{(\log n)^2}.$$  Thus, by Markov's Inequality, $$P \bigg( \frac{\log n}{n} \sup_{0 \leq t \leq T_n} |M_n(t)| > \frac{1}{\log n} \bigg) = P \bigg( \sup_{0 \leq t \leq T_n} M_n(t)^2 > \frac{n^2}{(\log n)^4} \bigg) \leq 4 (\log n)^2 T_n \eps_n,$$ as claimed.
\end{proof}

For $0 \leq t \leq T_n$, let
\begin{equation}\label{Zndef}
Z_n(t) = X_n(t) - \frac{\log n}{n} M_n(t) = \frac{\log n}{n} \bigg( N_n \bigg( \frac{t}{\log n} \bigg) - M_n(t) - ne^{-t} - \frac{n t e^{-t} \log \log n}{\log n} \bigg).
\end{equation}
Lemma \ref{martlem} implies that $$\sup_{0 \leq t \leq T_n} \frac{\log n}{n} \big|M_n(t)| \rightarrow_p 0$$ as $n \rightarrow \infty$.  Therefore, to show (\ref{XYprob}), it suffices to compare the processes $Z_n$ and $Y_n$.

For $s \geq 0$, let $(\tau_1^s, y_1^s), (\tau_2^s, y_2^s), \dots, (\tau_{J_n^s}^s, y_n^s)$ denote the points of $\Psi$ in $[-s, T_n] \times [\eps_n, \infty)$, ranked so that $\tau_1^s < \dots < \tau_{J_n^s}^s$.  Let $\tau_0^s = -s$ and $\tau_{J_n^s+1}^s = T_n$.  Note that $\tau_i^0 = \tau_i$ for $i = 0, 1, \dots, J_n + 1$.  Also let $y_i = y_i^0$ for $i = 1, \dots, J_n$.  In the next Lemma, we show that several events hold with high probability.  This result will allow us to assume that these events holds throughout much of the rest of the paper.

\begin{Lemma}\label{Anlem}
Let $A_{1,n}$ be the event that $J_n^T \leq n^{1/4}$.
Let $A_{2,n}$ be the event that $$\sum_{i=1}^{J_n^T} y_i^T \leq (\log n)^{3/4}.$$  Let $A_{3,n}$ be the event that $\tau_{i+1}^T - \tau_i^T \leq \eps_n \log n$ for $i = 0, 1, \dots, J_n^T$.  Let $A_n = A_{1,n} \cap A_{2,n} \cap A_{3,n}$.  Then $\lim_{n \rightarrow \infty} P(A_n) = 1$.  
\end{Lemma}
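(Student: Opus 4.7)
The plan is to verify each of the three events $A_{i,n}$ separately by standard Poisson-process estimates. The key underlying fact is that $1/\eps_n = e^{\sqrt{\log n}}$ grows slower than any positive power of $n$, while the time window $[-T, T_n]$ has length $O(\log\log n)$. All three estimates follow from a first-moment calculation against the intensity $dt \times y^{-2}\,dy$, possibly after a truncation.

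For $A_{1,n}$, note that $J_n^T$ is Poisson with mean
$$\mu_n \;=\; (T + T_n)\int_{\eps_n}^\infty y^{-2}\,dy \;=\; \frac{T+T_n}{\eps_n}.$$
Since $\mu_n = O(\log\log n \cdot e^{\sqrt{\log n}}) = o(n^{1/4})$, Markov's inequality gives $P(J_n^T > n^{1/4}) \le \mu_n/n^{1/4} \to 0$. For $A_{2,n}$, set $M_n = \tfrac12(\log n)^{3/4}$ and decompose the sum according to whether $y_i^T \le M_n$ or $y_i^T > M_n$. The expected number of points with $y > M_n$ inside $[-T,T_n] \times (M_n,\infty)$ is $(T+T_n)/M_n \to 0$, so with probability tending to one no such point exists. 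On this event $\sum_i y_i^T = \sum_i y_i^T \mathbf{1}_{\{y_i^T \le M_n\}}$, whose expectation, by Campbell's formula, equals
$$(T+T_n)\int_{\eps_n}^{M_n} y^{-1}\,dy \;=\; (T+T_n)\bigl(\log M_n - \log \eps_n\bigr) \;=\; O\bigl(\sqrt{\log n}\,\log\log n\bigr),$$
which is $o((\log n)^{3/4})$. Markov's inequality then gives $P(\sum_i y_i^T > (\log n)^{3/4}) \to 0$.

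For $A_{3,n}$, partition $[-T, T_n]$ into $K_n = \lceil 2(T + T_n)/(\eps_n \log n) \rceil$ consecutive sub-intervals each of length at most $\eps_n \log n / 2$. If every sub-interval contains at least one point of $\Psi \cap (\mathbb{R} \times [\eps_n, \infty))$, then every consecutive gap $\tau_{i+1}^T - \tau_i^T$ is at most $\eps_n \log n$ (the endpoints $-T$ and $T_n$ are handled in the same way). Since the projection of $\Psi \cap (\mathbb{R} \times [\eps_n, \infty))$ onto the time axis is a Poisson process of rate $1/\eps_n$, the probability that a given sub-interval contains no such point is $\exp(-(\eps_n \log n / 2)(1/\eps_n)) = n^{-1/2}$. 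A union bound yields
$$P(A_{3,n}^c) \;\le\; K_n \cdot n^{-1/2} \;=\; O\!\left(\frac{\log\log n \cdot e^{\sqrt{\log n}}}{n^{1/2}\log n}\right) \;\to\; 0.$$
Combining the three bounds via another union bound gives $\lim_n P(A_n) = 1$.

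There is no real obstacle here: all three steps are routine tail estimates for a Poisson random measure. The only point requiring a moment's care is $A_{2,n}$, where the intensity $y^{-2}\,dy$ has $\int_{\eps_n}^\infty y \cdot y^{-2}\,dy = \infty$, so a first-moment bound cannot be applied directly and one must first truncate at $M_n$ and control the large-jump contribution separately.
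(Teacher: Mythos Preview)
Your proof is correct and follows essentially the same approach as the paper: Markov's inequality for $A_{1,n}$, a truncation plus Markov for $A_{2,n}$, and the half-length partition with a union bound for $A_{3,n}$. The only cosmetic difference is your truncation level $M_n = \tfrac12(\log n)^{3/4}$ for $A_{2,n}$, whereas the paper truncates at $\log n$; both choices satisfy the needed constraints $(T+T_n)/M_n \to 0$ and $(T+T_n)(\log M_n + \sqrt{\log n}) = o((\log n)^{3/4})$.
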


\begin{proof}
Note that $$E[J_n^T] = (T_n + T) \int_{\eps_n}^{\infty} y^{-2} \: dy = \frac{(T_n + T)}{\eps_n}.$$  It follows from Markov's Inequality that $$\limsup_{n \rightarrow \infty} P(A_{1,n}^c) \leq \limsup_{n \rightarrow \infty} \frac{(T_n + T)}{n^{1/4} \eps_n} = 0,$$ and thus $\lim_{n \rightarrow \infty} P(A_{1,n}) = 1$.

We have $$P \bigg( \max_{1 \leq i \leq J_n^T} y_i > \log n \bigg) \leq (T_n + T) \int_{\log n}^{\infty} y^{-2} \: dy = \frac{T_n + T}{\log n} \rightarrow 0$$ as $n \rightarrow \infty$.
Also, $$E \bigg[ \sum_{i=1}^{J_n^T} y_i^T \bigg| \max_{1 \leq i \leq J_n^T} y_i^T \leq \log n \bigg] = (T_n + T) \int_{\eps_n}^{\log n} y \cdot y^{-2} \: dy = (T_n + T) (\log \log n - \log \eps_n).$$  Because $-\log \eps_n = (\log n)^{1/2}$ and thus $(T_n + T)(\log \log n - \log \eps_n)/(\log n)^{3/4} \rightarrow 0$ as $n \rightarrow \infty$, it now follows from Markov's Inequality that $\lim_{n \rightarrow \infty} P(A_{2,n}) = 1$. 

To estimate $P(A_{3,n})$, let $a_k = -T + (k \eps_n \log n)/2$ for $k = 0, 1, \dots$, and let $K = \min\{k: a_k > T_n\}$.  Note that if $\tau_{i+1}^T - \tau_i^T > \eps_n \log n$, then some interval of the form $[a_{k-1}, a_k]$ with $1 \leq k \leq K$ must not contain any of the points $\tau_i^T$.  The probability that $[a_{k-1}, a_k]$ does not contain any of the $\tau_i^T$ is $$\exp \bigg( - (a_k - a_{k-1}) \int_{\eps_n}^{\infty} y^{-2} \: dy \bigg) = \exp \bigg( - \frac{\eps_n \log n}{2} \cdot \frac{1}{\eps_n} \bigg) = n^{-1/2}.$$  It follows that for sufficiently large $n$, $$P(A_{3,n}^c) \leq K n^{-1/2} \leq \bigg( \frac{2(T_n + T)}{\eps_n \log n} + 1 \bigg) n^{-1/2} \rightarrow 0$$ as $n \rightarrow \infty$.  Therefore, $\lim_{n \rightarrow \infty} P(A_{3,n}) = 1$, which completes the proof.
\end{proof}

The next lemma shows that the processes $Y_n$ and $Z_n$ typically jump by approximately the same amount at the jump times $\tau_i$.

\begin{Lemma}\label{couplejumps}
We have
\begin{align}
&P \bigg( A_n \cap \bigg\{ \sum_{i=1}^{J_n} \big| (Z_n(\tau_i) - Z_n(\tau_i-)) - (Y_n(\tau_i) - Y_n(\tau_i-)) \big| {\bf 1}_{\{|X_n(\tau_i-)| \leq \log \log n\}} > \frac{1}{(\log n)^{1/8}} \bigg\} \bigg) \nonumber \\
&\hspace{5.2in}\leq \frac{T_n (\log n)^{3/2}}{n^{1/2} \eps_n} \nonumber
\end{align}
for sufficiently large $n$.
\end{Lemma}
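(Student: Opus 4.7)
The plan is to compute the jumps of $Z_n$ and $Y_n$ at each $\tau_i$ explicitly, control the conditional mean and variance of their difference, and conclude with a second-moment argument. Because the definition (\ref{Mndef}) resets each $M_{i,n}$ to zero on $[\tau_i, \tau_{i+1})$, the martingale $M_n$ has no jump at any $\tau_i$; hence the jump of $Z_n$ at $\tau_i$ coincides with that of $X_n$, which by (\ref{Xndef}) equals $-\Delta_i (\log n)/n$, where $\Delta_i := N_n(\tau_i/\log n -) - N_n(\tau_i/\log n)$ is the decrease in the block count caused by the point $(-\tau_i/\log n, y_i/\log n)$ of $\Theta$. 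On the other hand, $Y_n(\tau_i) - Y_n(\tau_i-) = e^{-\tau_i}(S_n(\tau_i) - S_n(\tau_i-)) = -e^{-\tau_i} y_i$. Setting $D_i := e^{-\tau_i} y_i - \Delta_i (\log n)/n$, the task reduces to bounding $\sum_{i=1}^{J_n} |D_i| {\bf 1}_{\{|X_n(\tau_i-)| \leq \log\log n\}}$ on $A_n$.

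Conditional on $b := N_n(\tau_i/\log n -)$ and $y_i$, the Poisson-process construction of subsection \ref{constsec} shows that $\Delta_i$ has the law of $(K - 1){\bf 1}_{\{K \geq 2\}}$ with $K \sim \mbox{Bin}(b, y_i/\log n)$; elementary computation gives $E[\Delta_i | b, y_i] = by_i/\log n - 1 + (1-y_i/\log n)^b$ and $\mbox{Var}(\Delta_i | b, y_i) \leq by_i/\log n$. On the event $\{|X_n(\tau_i-)| \leq \log\log n\}$ with $\tau_i \leq T_n$, the definition of $X_n$ together with $\tau_i e^{-\tau_i} \leq 1$ yields $|b/n - e^{-\tau_i}| \leq 2 \log\log n/\log n$, and in particular $b \geq n/(2(\log n)^2)$; since $y_i \geq \eps_n$, the product $by_i/\log n$ is at least $n\eps_n/(2(\log n)^3)$, which grows faster than any power of $\log n$, so the tail term $(1-y_i/\log n)^b \leq e^{-by_i/\log n}$ is super-polynomially small. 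These estimates combine to give
\[
|E[D_i | b, y_i]| \leq C y_i \log\log n /\log n + C\log n/n,\qquad \mbox{Var}(D_i | b, y_i) \leq C y_i \log n/n.
\]

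I will then split $D_i = B_i + R_i$ into its conditional bias $B_i = E[D_i | b, y_i]$ and noise $R_i = D_i - B_i$. On $A_n$, the bounds $J_n \leq n^{1/4}$ and $\sum_i y_i \leq (\log n)^{3/4}$ force the bias sum to satisfy $\sum_{i=1}^{J_n} |B_i| {\bf 1}_{\{|X_n(\tau_i-)| \leq \log\log n\}} \leq C \log\log n/(\log n)^{1/4} + C \log n/n^{3/4}$, which is below $\tfrac12 (\log n)^{-1/8}$ for sufficiently large $n$. For the noise, the $R_i$ form a martingale difference sequence conditional on $\Psi$, giving $E[\sum_i R_i^2 {\bf 1}_{A_n}] \leq C(\log n)^{7/4}/n$; combining Cauchy--Schwarz $(\sum |R_i|)^2 \leq J_n \sum R_i^2 \leq n^{1/4} \sum R_i^2$ on $A_n$ with Markov's inequality yields $P(\sum |R_i| {\bf 1}_{A_n \cap \{\cdot\}} > \tfrac12 (\log n)^{-1/8}) \leq C(\log n)^2/n^{3/4}$, which is dominated by $T_n (\log n)^{3/2}/(n^{1/2} \eps_n)$ for large $n$. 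The main technical obstacle is simultaneously controlling the bias---of order $y_i \log \log n/\log n$ per jump, coming from the discrepancy between $e^{-\tau_i}$ and $b/n$---and the noise---of standard deviation $\sqrt{y_i \log n/n}$ per jump, coming from the binomial fluctuation of $\Delta_i$; the former is tamed by the $\ell^1$-bound on $\sum y_i$ afforded by $A_n$, while the latter requires combining this $\ell^1$-bound with the sparsity bound $J_n \leq n^{1/4}$ via Cauchy--Schwarz.
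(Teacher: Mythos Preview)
Your approach is essentially the same as the paper's: both identify the $Z_n$-jump with the $X_n$-jump (since $M_n$ is continuous at each $\tau_i$), then split the discrepancy $D_i$ into a bias part controlled deterministically on $A_n$ via $\sum y_i \leq (\log n)^{3/4}$ and a noise part handled by a second-moment argument. The only technical difference is in the noise step: the paper bounds $E[|B_i - b y_i/\log n|\,] \leq (n y_i/\log n)^{1/2} \leq n^{1/2}$ term by term via Cauchy--Schwarz, sums using $E[J_n] = T_n/\eps_n$, and applies Markov at threshold $(\log n)^{-1/2}$; you instead bound $E[\sum R_i^2]$ and then combine with $J_n \leq n^{1/4}$ via Cauchy--Schwarz on the whole sum. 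Your route yields a sharper probability estimate (of order $(\log n)^2 n^{-3/4}$) but both are well inside the stated bound.

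One correction is needed. Your claim that $b \geq n/(2(\log n)^2)$ on $\{|X_n(\tau_i-)| \leq \log\log n\}$ is false when $\tau_i$ is near $T_n$: there $e^{-\tau_i} \approx (\log n)^{-2}$, which is dominated by the error term $\log\log n/\log n$ in your estimate for $b/n$, so no useful lower bound on $b$ follows. Fortunately you do not actually need the tail term $(1 - y_i/\log n)^b$ to be super-polynomially small; it is between $0$ and $1$ unconditionally, hence contributes at most $\log n/n$ to $|E[D_i\mid b,y_i]|$, and this is already absorbed by your $C\log n/n$ term. With that remark your bias bound, and the rest of the argument, stands.
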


\begin{proof}
Recall that $(\tau_i, y_i)$ is a point of $\Psi$ for $i = 1, \dots, J_n$.  Therefore, $S_n(\tau_i) - S_n(\tau_i-) = -y_i$, so
\begin{equation}\label{yjump}
Y_n(\tau_i) - Y_n(\tau_i-) = -e^{-\tau_i} y_i.
\end{equation}
Let $\tau_i = T_n$ and $y_i = 0$ on $\{i > J_n\}$, and let ${\cal G}_i = \sigma(\tau_i, N_n((\tau_i/\log n)-), y_i)$.  Then on $\{i \leq J_n\}$,
\begin{equation}\label{NB}
N_n \bigg( \frac{\tau_i}{\log n} \bigg) - N_n \bigg( \frac{\tau_i}{\log n} - \bigg) = \min\{0, 1 - B_i\},
\end{equation}
where, conditional on ${\cal G}_i$, the distribution of $B_i$ is binomial with parameters $N_n((\tau_i/\log n)-)$ and $y_i/\log n$.  Here $B_i$ represents the number of lineages out of $N_n((\tau_i/\log n)-)$ that merge at time $\tau_i/\log n$.  We have $\mbox{Var}(B_i|{\cal G}_i) \leq n y_i/\log n$ on $\{i \leq J_n\}$, so by the Cauchy-Schwarz Inequality, $$E \bigg[ \bigg| B_i - N_n \bigg( \frac{\tau_i}{\log n}- \bigg) \frac{y_i}{\log n} \bigg| \big| {\cal G}_i \bigg] \leq E \bigg[ \bigg( B_i - N_n \bigg( \frac{\tau_i}{\log n}- \bigg) \frac{y_i}{\log n} \bigg)^2 \bigg| {\cal G}_i \bigg]^{1/2} \leq \bigg( \frac{n y_i}{\log n} \bigg)^{1/2}$$ on $\{i \leq J_n\}$.  Multiplying both sides by ${\bf 1}_{\{i \leq J_n\}} {\bf 1}_{\{y_i \leq \log n\}}$, which is ${\cal G}_i$-measurable, and taking expectations gives
$$E \bigg[ \bigg| B_i - N_n \bigg( \frac{\tau_i}{\log n}- \bigg) \frac{y_i}{\log n} \bigg| {\bf 1}_{\{i \leq J_n\}} {\bf 1}_{\{y_i \leq \log n\}} \bigg] \leq n^{1/2} P(i \leq J_n).$$  Summing over $i$ and observing that $A_{2,n} \cap \{i \leq J_n\} \subset \{y_i \leq \log n\}$ for all $i$, we get
$$E \bigg[ {\bf 1}_{A_{2,n}} \sum_{i=1}^{J_n} \bigg| B_i - N_n \bigg( \frac{\tau_i}{\log n}- \bigg) \frac{y_i}{\log n} \bigg| \bigg]  \leq n^{1/2} E[J_n] = n^{1/2} T_n \int_{\eps_n}^{\infty} y^{-2} \: dy = \frac{n^{1/2} T_n}{\eps_n}.$$  Thus,
\begin{equation}\label{ZY1}
P \bigg( A_{2,n} \cap \bigg\{ \frac{\log n}{n} \sum_{i=1}^{J_n} \bigg| B_i - N_n \bigg( \frac{\tau_i}{\log n}- \bigg) \frac{y_i}{\log n} \bigg| > \frac{1}{(\log n)^{1/2}} \bigg\}\bigg) \leq \frac{T_n (\log n)^{3/2}}{n^{1/2} \eps_n}.
\end{equation}
Also, 
\begin{align}
\bigg| \frac{y_i}{n} N_n \bigg( \frac{\tau_i}{\log n} - \bigg) - e^{-\tau_i} y_i \bigg| &= \bigg| \frac{y_i}{n} \bigg( \frac{n}{\log n} X_n(\tau_i-) + ne^{-\tau_i} + \frac{n \tau_i e^{-\tau_i} \log \log n}{\log n} \bigg) - e^{-\tau_i} y_i \bigg| \nonumber \\
&= y_i \bigg| \frac{X_n(\tau_i-)}{\log n} + \frac{\tau_i e^{-\tau_i} \log \log n}{\log n} \bigg|, \nonumber
\end{align}
which on the event $\{|X_n(\tau_i-)| \leq \log \log n\}$ is bounded by $2 y_i (\log \log n)/\log n$.  Thus, on $A_{2,n}$, 
\begin{equation}\label{ZY2}
\sum_{i=1}^{J_n} \bigg| \frac{y_i}{n} N_n \bigg( \frac{\tau_i}{\log n}- \bigg) - e^{-\tau_i} y_i \bigg| {\bf 1}_{\{|X_n(\tau_i-)| \leq \log \log n\}} \leq \frac{2 \log \log n}{\log n} \sum_{i=1}^{J_n} y_i \leq \frac{2 \log \log n}{(\log n)^{1/4}}.
\end{equation}
By (\ref{NB}), $$\bigg| Z_n(\tau_i) - Z_n(\tau_i-) + \frac{\log n}{n} B_i \bigg| = \bigg| \frac{\log n}{n} \bigg( N_n \bigg( \frac{\tau_i}{\log n} \bigg) - N_n \bigg( \frac{\tau_i}{\log n} - \bigg) + B_i \bigg) \bigg| \leq \frac{\log n}{n},$$ and so on $A_{1,n}$,
\begin{equation}\label{ZY3}
\sum_{i=1}^{J_n} \bigg| Z_n(\tau_i) - Z_n(\tau_i-) + \frac{\log n}{n} B_i \bigg| \leq \frac{\log n}{n^{3/4}}.
\end{equation}
Combining (\ref{ZY1}), (\ref{ZY2}), and (\ref{ZY3}), we get
\begin{align}
&P \bigg(A_n \cap \bigg\{ \sum_{i=1}^{J_n} \big|(Z_n(\tau_i) - Z_n(\tau_i-)) + e^{-\tau_i} y_i \big| {\bf 1}_{\{|X_n(\tau_i-)| \leq \log \log n\}}\nonumber \\
&\hspace{1.5in} > \frac{1}{(\log n)^{1/2}} + \frac{2 \log \log n}{(\log n)^{1/4}} + \frac{\log n}{n^{3/4}} \bigg\} \bigg) \leq \frac{T_n (\log n)^{3/2}}{n^{1/2} \eps_n}.
\end{align}
The result follows by combining this result with (\ref{yjump}) and using that
$$\frac{1}{(\log n)^{1/2}} + \frac{2 \log \log n}{(\log n)^{1/4}} + \frac{\log n}{n^{3/4}} \leq \frac{1}{(\log n)^{1/8}}$$ for sufficiently large $n$.
\end{proof}

Lemmas \ref{Ydrift} and \ref{Zdrift} below pertain to the behavior of the processes $Y_n$ and $Z_n$ in between the jump times $\tau_i$.

\begin{Lemma} \label{Ydrift}
If $0 \leq h < \tau_{i+1} - \tau_i \leq \eps_n \log n$ and $n$ is sufficiently large, then on $\{i \leq J_n\}$, we have $$\bigg|Y_n(\tau_i + h) - Y_n(\tau_i) - h e^{-\tau_i}(1 - \gamma - \log \eps_n + \tau_i) + \int_{\tau_i}^{\tau_i + h} Y_n(s) \: ds \bigg| {\bf 1}_{\{S_n(\tau_i) \leq \frac{1}{4} \log n\}} \leq h^2 \log n.$$
\end{Lemma}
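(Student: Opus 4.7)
The plan is to exploit the fact that between consecutive jump times of $S_n$, the process $S_n$ evolves purely by its constant compensating drift, so that $Y_n$ satisfies a simple ODE; the claimed bound is then just the error in a first-order left-endpoint approximation of an integral.

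First I would observe that by the construction in (\ref{Sndef}), since no point of $\Psi$ with $x$-coordinate exceeding $\eps_n$ has $t$-coordinate in $(\tau_i, \tau_i+h] \subset (\tau_i, \tau_{i+1})$, for $0 \leq h < \tau_{i+1} - \tau_i$ we have
$$S_n(\tau_i + h) = S_n(\tau_i) + h c, \qquad \text{where } c := 1 - \gamma - \log \eps_n.$$
Therefore $s \mapsto Y_n(s) = e^{-s}(S_n(s) + s^2/2)$ is smooth on $(\tau_i, \tau_{i+1})$ with
$$Y_n'(s) = -Y_n(s) + e^{-s}(c + s).$$
Integrating from $\tau_i$ to $\tau_i + h$ (and using right-continuity at $\tau_i$) gives
$$Y_n(\tau_i + h) - Y_n(\tau_i) + \int_{\tau_i}^{\tau_i + h} Y_n(s)\, ds \;=\; \int_{\tau_i}^{\tau_i + h} e^{-s}(c + s)\, ds.$$
Hence the quantity to be bounded in the lemma equals
$$\left| \int_{\tau_i}^{\tau_i + h} e^{-s}(c+s)\, ds - h e^{-\tau_i}(c + \tau_i)\right|,$$
which no longer involves $S_n$ or $Y_n$.

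Next I would bound this error by Taylor's theorem with integral remainder. Setting $g(s) = e^{-s}(c+s)$, so that $g'(s) = e^{-s}(1 - c - s)$, we get
$$\left| \int_{\tau_i}^{\tau_i + h} g(s)\, ds - h g(\tau_i) \right| \;=\; \left| \int_{\tau_i}^{\tau_i+h} (\tau_i + h - u) g'(u)\, du \right| \;\leq\; \frac{h^2}{2} \sup_{s \in [\tau_i, \tau_i+h]} |g'(s)|.$$
It remains to check that $\sup_s |g'(s)| \leq 2 \log n$ for $n$ large. Since $|c| = |1 - \gamma - \log \eps_n| \leq \sqrt{\log n} + 2$ (as $\log \eps_n = -\sqrt{\log n}$) and $s \leq \tau_i + h \leq T_n + \eps_n \log n = 2\log\log n + o(1)$, we have
$$|g'(s)| \leq e^{-s}(|c| + s + 1) \leq \sqrt{\log n} + 2 \log \log n + 4 \leq 2 \log n$$
for all sufficiently large $n$. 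Combining these estimates yields the bound $h^2 \log n$ as claimed.

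There is no real obstacle: once one recognizes that $Y_n$ solves the linear ODE above between jumps of $S_n$, the lemma reduces to a standard left-endpoint quadrature estimate, and the only thing to keep an eye on is that both $|c|$ and the range of $s$ grow much more slowly than $\log n$, so that the supremum of $|g'|$ fits inside the target. The indicator $\mathbf{1}_{\{S_n(\tau_i) \leq \frac{1}{4} \log n\}}$ plays no role in the argument itself; it is presumably included because the lemma will be invoked on this event elsewhere in the paper.
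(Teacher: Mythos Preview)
Your argument is correct and is in fact cleaner than the paper's. By recognizing that between jumps $Y_n$ solves the linear ODE $Y_n'(s) = -Y_n(s) + e^{-s}(c+s)$, you reduce the quantity in the lemma \emph{exactly} to the quadrature error $\bigl|\int_{\tau_i}^{\tau_i+h} g(s)\,ds - hg(\tau_i)\bigr|$ for $g(s)=e^{-s}(c+s)$, and this depends neither on $S_n$ nor on $Y_n$. The paper instead Taylor-expands $e^{-(\tau_i+h)}$ and $\tfrac12 e^{-(\tau_i+h)}(\tau_i+h)^2$, writes $Y_n(\tau_i+h)-Y_n(\tau_i)$ as a main term plus remainders, and then separately estimates $\bigl|\int_{\tau_i}^{\tau_i+h} Y_n(s)\,ds - hY_n(\tau_i)\bigr|$; both of these steps produce error terms containing $S_n(\tau_i)$, which is why the paper genuinely needs the hypothesis $S_n(\tau_i)\leq \tfrac14\log n$ to close the bound. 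Your ODE identity eliminates those terms, and your observation that the indicator is superfluous for the inequality is therefore not just a guess but a real improvement: the paper carries the indicator because its own proof requires it, whereas your proof shows the bound holds unconditionally (the indicator is still needed downstream, but not here). Your estimate $|g'(s)|\leq 2\log n$ is even generous; since $|c|\sim\sqrt{\log n}$ and $s\leq T_n$, you actually get $|g'(s)|=O(\sqrt{\log n})$.
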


\begin{proof}
By the construction of the process $S_n$, we have $S_n(\tau_i + h) - S_n(\tau_i) = h(1 - \gamma - \log \eps_n)$.  Using $O(h^2)$ to denote an expression whose absolute value is at most $h^2$, Taylor's Theorem gives $e^{-(\tau_i + h)} = e^{-\tau_i} - he^{-\tau_i} + O(h^2)$ and $\frac{1}{2} e^{-(\tau_i + h)} (\tau_i + h)^2 = \frac{1}{2} e^{-\tau_i} \tau_i^2 + h(\tau_i e^{-\tau_i} - \frac{1}{2} \tau_i^2 e^{-\tau_i}) + O(h^2)$.  Therefore,
\begin{align}
Y_n(\tau_i + h) - Y_n(\tau_i) &= e^{-(\tau_i + h)} S_n(\tau_i + h) - e^{-\tau_i} S_n(\tau_i) + \frac{e^{-(\tau_i + h)}(\tau_i + h)^2}{2} - \frac{e^{-\tau_i} \tau_i^2}{2} \nonumber \\
&= (e^{-\tau_i} - h e^{-\tau_i} + O(h^2)) (S_n(\tau_i) + h(1 - \gamma - \log \eps_n)) - e^{-\tau_i} S_n(\tau_i) \nonumber \\
&\hspace{2.3in}+ h\bigg(\tau_i e^{-\tau_i} - \frac{1}{2} \tau_i^2 e^{-\tau_i} \bigg) + O(h^2) \nonumber \\
&= h e^{-\tau_i}(1 - \gamma - \log \eps_n + \tau_i) - h \bigg(e^{-\tau_i} S_n(\tau_i) + \frac{e^{-\tau_i} \tau_i^2}{2} \bigg) \nonumber \\
&\hspace{.2in} + O(h^2) - h^2 e^{-\tau_i} (1 - \gamma - \log \eps_n) + O(h^2)(S_n(\tau_i) + h(1 - \gamma - \log \eps_n)). \nonumber
\end{align}
On the event that $S_n(\tau_i) \leq \frac{1}{4} \log n$, for sufficiently large $n$, the absolute value of the sum of the first two terms is bounded above by $\frac{1}{2} h \log n$, while the absolute value of the sum of the last three terms is bounded above by $\frac{1}{2} h^2 \log n$.  Therefore, for sufficiently large $n$, on the event that $S_n(\tau_i) \leq \frac{1}{4} \log n$, we have
\begin{equation}\label{h21}
\bigg|Y_n(\tau_i + h) - Y_n(\tau_i) - h e^{-\tau_i}(1 - \gamma - \log \eps_n + \tau_i) + hY_n(\tau_i) \bigg| \leq \frac{1}{2} h^2 \log n.
\end{equation}
Also, for sufficiently large $n$, if $0 \leq s \leq h$, then $|Y_n(\tau_i + s) - Y_n(\tau_i)| \leq \frac{1}{2} s \log n + \frac{1}{2}s^2 \log n \leq s \log n$ on the event that $S_n(\tau_i) \leq \frac{1}{4} \log n$ because $h \leq \eps_n \log n \leq 1$.  Therefore,
\begin{equation}\label{h22}
\bigg| \int_{\tau_i}^{\tau_i + h} Y_n(s) \: ds - hY_n(\tau_i) \bigg| \leq \int_0^h |Y_n(\tau_i + s) - Y_n(\tau_i)| \: ds \leq \int_0^h s \log n \: ds = \frac{1}{2} h^2 \log n.
\end{equation}
By combining (\ref{h21}) and (\ref{h22}), we arrive at the statement of the lemma.
\end{proof}

\begin{Lemma}\label{Nnbound}
Suppose $0 \leq h < \tau_{i+1} - \tau_i \leq \eps_n \log n$ and $\tau_i \leq s \leq \tau_i + h$.  If $n$ is sufficiently large and $|X_n(s)| \leq \log \log n$, then
\begin{equation}\label{claimNn}
N_n \bigg( \frac{s}{\log n} \bigg) \bigg| \log N_n \bigg( \frac{s}{\log n} \bigg) - \log(n e^{-\tau_i}) \bigg| \leq \frac{8 n (\log \log n)^2}{\log n}.
\end{equation}
\end{Lemma}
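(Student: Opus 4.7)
The approach is to invert the definition \eqref{Xndef} of $X_n$ to get an explicit expression for $a := N_n(s/\log n)$, and then to bound $a\,|\log(a/b)|$, with $b := ne^{-\tau_i}$, by case analysis on the size of $a/b$. Setting $t = s$ in \eqref{Xndef} and solving gives
$$a \;=\; ne^{-s} + \frac{nse^{-s}\log\log n}{\log n} + \frac{nX_n(s)}{\log n},$$
so the difference $a - b$ splits into three pieces. The pointwise estimates $|e^{-s}-e^{-\tau_i}| \leq e^{-\tau_i}(s-\tau_i) \leq e^{-\tau_i}\eps_n\log n$ on the first piece, $e^{-s}\leq e^{-\tau_i}$ together with $s \leq T_n = 2\log\log n$ on the second, and $|X_n(s)| \leq \log\log n$ on the third yield
$$|a-b| \;\leq\; b\,\eps_n\log n \;+\; \frac{2b(\log\log n)^2}{\log n} \;+\; \frac{n\log\log n}{\log n}.$$
Because $\eps_n\log n = (\log n)e^{-\sqrt{\log n}} \to 0$, for large $n$ the first term is absorbed into the second, and using $b \leq n$ together with $\log\log n \leq (\log\log n)^2$ gives the coarse bound $|a-b| \leq 4n(\log\log n)^2/\log n$.

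To pass from $|a-b|$ to $a|\log(a/b)|$, I would split into three cases according to the size of $a/b$. When $a \leq b$, the elementary inequality $\log(b/a) \leq (b-a)/a$ gives $a|\log(a/b)| \leq b - a$, which is at most $4n(\log\log n)^2/\log n$. When $b < a \leq 2b$, the inequality $\log(a/b) \leq (a-b)/b$ gives $a|\log(a/b)| \leq a(a-b)/b \leq 2(a-b) \leq 8n(\log\log n)^2/\log n$. The remaining case $a > 2b$ is the delicate one: here $b < a - b$ combined with the sharper estimate $a - b \leq \tfrac{3b(\log\log n)^2}{\log n} + \tfrac{n\log\log n}{\log n}$ forces $b < \tfrac{2n\log\log n}{\log n}$ (since $3(\log\log n)^2/\log n < 1/2$ for large $n$), and hence $a < 2(a-b) < \tfrac{4n\log\log n}{\log n}$. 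Combining this with the crude bound $\log(a/b) \leq \log(n/b) \leq 2\log\log n$, which holds because $\tau_i \leq T_n$ forces $b \geq ne^{-T_n} = n/(\log n)^2$, produces $a|\log(a/b)| \leq \tfrac{4n\log\log n}{\log n}\cdot 2\log\log n = \tfrac{8n(\log\log n)^2}{\log n}$, exactly the required bound.

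The main thing to watch will be the constant $8$ in the last case: it arises as the product of the bound $4$ on $a \log n / (n\log\log n)$ and the bound $2$ on $\log(a/b)/\log\log n$, with essentially no slack, so any coarser estimate on either factor would spoil the target constant. Otherwise the argument is just careful bookkeeping of the three contributions to $a-b$.
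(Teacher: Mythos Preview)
Your argument is correct and reaches the same conclusion as the paper's, but the case split and the key inequalities differ in an interesting way.

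The paper first obtains the slightly sharper uniform bound $|a-b|\le 3n\log\log n/\log n$ (using $nh\le n\eps_n\log n\le n\log\log n/\log n$ rather than tracking a $b$-dependent term), and then splits according to whether $b=ne^{-\tau_i}$ is large or small relative to $4n\log\log n/\log n$. In the large-$b$ case it applies the mean-value theorem for $\log$, much as you do in your first two cases. In the small-$b$ case it studies the one-variable function $f(x)=x(\log x-\log b)$, observes that $|f|$ is maximised either at $x=b/e$ or at the right endpoint $x=b+3n\log\log n/\log n$, and checks the bound at those two points. Your approach instead cases on the ratio $a/b$ and, in the delicate case $a>2b$, exploits the $b$-dependent term $3b(\log\log n)^2/\log n$ in your estimate for $|a-b|$ to force $b$ (hence $a$) small, after which the crude bound $\log(a/b)\le\log(n/b)=\tau_i\le 2\log\log n$ closes the argument. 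This avoids the calculus step entirely; the price is that you must keep the sharper $b$-dependent form of the bound on $|a-b|$ available, whereas the paper can immediately coarsen. Both routes land on the constant $8$ with essentially no slack, as you noted.
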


\begin{proof}
In view of (\ref{Xndef}), if $\tau_i \leq s \leq \tau_i + h$, then, using that $h \leq \eps_n \log n \leq (\log \log n)/\log n$ for sufficiently large $n$ and using the assumption that $|X_n(s)| \leq \log \log n$, we get
\begin{align} \label{Nndiff}
\bigg| N_n \bigg( \frac{s}{\log n} \bigg) - ne^{-\tau_i} \bigg| &= \bigg| \frac{n X_n(s)}{\log n} + ne^{-s} + \frac{nse^{-s} \log \log n}{\log n} - ne^{-\tau_i} \bigg| \nonumber \\
&\leq \big|ne^{-s} - ne^{-\tau_i} \big| + \frac{n s e^{-s} (\log \log n)}{\log n} + \bigg| \frac{n X_n(s)}{\log n} \bigg| \nonumber \\
&\leq nh + \frac{n \log \log n}{\log n} + \frac{n \log \log n}{\log n} \nonumber \\
&\leq \frac{3n \log \log n}{\log n}
\end{align}
for sufficiently large $n$.

We consider two cases.  First, suppose $n e^{-\tau_i} \geq (4n \log \log n)/\log n$.  Because (\ref{Nndiff}) implies that $ne^{-\tau_i} \geq \frac{1}{4} N_n(s/\log n)$, we get, using (\ref{Nndiff}) and the fact that $\frac{d}{dx} \log x = 1/x$, $$\bigg| \log N_n \bigg( \frac{s}{\log n} \bigg) - \log (ne^{-\tau_i}) \bigg| \leq \bigg( \frac{3n \log \log n}{\log n} \bigg) \bigg( \frac{1}{4} N_n \bigg( \frac{s}{\log n} \bigg) \bigg)^{-1}.$$  Therefore, $$N_n \bigg( \frac{s}{\log n} \bigg) \bigg| \log N_n \bigg( \frac{s}{\log n} \bigg) - \log(n e^{-\tau_i}) \bigg| \leq \frac{12n \log \log n}{\log n}.$$

Next, suppose that $ne^{-\tau_i} \leq (4n \log \log n)/\log n$.  To determine the value of $N_n(s/(\log n))$ that maximizes the left-hand side of (\ref{claimNn}), we consider the function $f(x) = x(\log x - \log a)$, where $a > 0$ is a constant.  Note that $f'(x) = 1 + \log (x/a)$, which is positive when $x > a/e$ and negative when $x < a/e$.  Thus, $f(x)$ is negative when $x < a$ and reaches its minimum when $x = a/e$, and $f(x)$ is positive and increasing for $x > a$.  We conclude that (\ref{claimNn}) will hold in general provided that it holds if $ne^{-\tau_i}/e$ or $ne^{-\tau_i} + (3n \log \log n)/\log n$ is plugged in for $N_n(s/(\log n))$.  Note that by (\ref{Nndiff}), we need not consider larger values.  In the former case, the expression that we get is $$\frac{ne^{-\tau_i}}{e} \bigg| \log \bigg( \frac{ne^{-\tau_i}}{e} \bigg) - \log(n e^{-\tau_i}) \bigg| = \frac{ne^{-\tau_i}}{e} \leq \frac{4n \log \log n}{e \log n}.$$  In the latter case, since $n e^{-\tau_i} \geq n e^{-T_n} = n/(\log n)^2$, the expression that we get is bounded above by
\begin{align}
\frac{7 n \log \log n}{\log n} &\bigg( \log \bigg( \frac{7n \log \log n}{\log n} \bigg) - \log \bigg(\frac{n}{(\log n)^2} \bigg) \bigg) \nonumber \\
&\leq \frac{7n \log \log n}{\log n} \bigg( \log 7 + \log \log \log n + \log \log n \bigg). \nonumber
\end{align}
Thus, (\ref{claimNn}) holds for sufficiently large $n$.
\end{proof}

\begin{Lemma}\label{Zdrift}
If $0 \leq h < \tau_{i+1} - \tau_i \leq \eps_n \log n$ and $n$ is sufficiently large, then on $\{i \leq J_n\}$, we have
\begin{align}
&\bigg|Z_n(\tau_i + h) - Z_n(\tau_i) - h e^{-\tau_i}(1 - \gamma - \log \eps_n + \tau_i) + \int_{\tau_i}^{\tau_i + h} X_n(t) \: dt \bigg| {\bf 1}_{\{|X_n(s)| \leq \log \log n \: \forall \: s \in [\tau_i, \tau_i + h]\}} \nonumber \\
&\hspace{4in}\leq h^2 (\log n)^{1/2} + \frac{3 h \log \log n}{(\log n)^{1/2}}. \nonumber
\end{align}
\end{Lemma}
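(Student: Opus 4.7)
The starting point is the martingale decomposition (\ref{Mindef})--(\ref{Mndef}) of $N_n$. On $\{i \leq J_n\}$, no points of $\Psi$ with $y$-coordinate $\geq \eps_n$ lie in $(\tau_i, \tau_i+h]$, so over the interval $(\tau_i/\log n, (\tau_i+h)/\log n]$ the process $N_n$ executes only compensated small jumps. Since $M_n(\tau_i+h)-M_n(\tau_i)=M_{i,n}(h)$ for $h < \tau_{i+1}-\tau_i$, after the change of variables $s = u/\log n$ the definition (\ref{Zndef}) gives
\[
Z_n(\tau_i+h) - Z_n(\tau_i) = -\frac{1}{n}\int_{\tau_i}^{\tau_i+h}\eta^*(N_n(u/\log n))\,du - \log n\,(e^{-(\tau_i+h)}-e^{-\tau_i}) - \log\log n\,\big((\tau_i+h)e^{-(\tau_i+h)} - \tau_i e^{-\tau_i}\big).
\]

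I would then approximate the integrand. Let $C := \log n - \tau_i - \log\log n + \log\eps_n + \gamma - 1$. Combining (\ref{meancomp}) with Lemma \ref{Nnbound}, on the event $\{|X_n(u)| \leq \log\log n\}$,
\[
\big|\eta^*(N_n(u/\log n)) - C\cdot N_n(u/\log n)\big| \leq \frac{8n(\log\log n)^2}{\log n} + \frac{\log n}{\eps_n} + 1.
\]
Since $h \leq \eps_n\log n$, integrating and dividing by $n$ contributes an error of order $h(\log\log n)^2/\log n + h\log n/(n\eps_n)$, which is easily checked to be $o(h\log\log n/\sqrt{\log n})$ using $\eps_n = e^{-\sqrt{\log n}}$.

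Next I would substitute the identity $N_n(u/\log n)/n = X_n(u)/\log n + e^{-u} + ue^{-u}\log\log n/\log n$ from (\ref{Xndef}) to split the main term into three pieces. For the $X_n$-piece, since $|C/\log n - 1| = O(1/\sqrt{\log n})$ (using $|\log\eps_n| = \sqrt{\log n}$ and $\tau_i \leq T_n = 2\log\log n$), this piece equals $-\int_{\tau_i}^{\tau_i+h}X_n(u)\,du$ up to an error at most $2h\log\log n/\sqrt{\log n}$. The other two pieces combine with the two Taylor terms via the integration-by-parts identity
\[
\int_{\tau_i}^{\tau_i+h} ue^{-u}\,du + \big((\tau_i+h)e^{-(\tau_i+h)} - \tau_i e^{-\tau_i}\big) = e^{-\tau_i}-e^{-(\tau_i+h)}.
\]
Specifically, the $e^{-u}$-piece combines with $-\log n\,(e^{-(\tau_i+h)}-e^{-\tau_i})$ to give $(\log n - C)(e^{-\tau_i}-e^{-(\tau_i+h)})$, while (after incurring an error of order $h\log\log n/\sqrt{\log n}$ from replacing $C/\log n$ by $1$) the $ue^{-u}$-piece combines with $-\log\log n\big((\tau_i+h)e^{-(\tau_i+h)} - \tau_i e^{-\tau_i}\big)$ to yield $-\log\log n\,(e^{-\tau_i}-e^{-(\tau_i+h)})$. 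Summing, the $\log\log n$ terms cancel, leaving $(\tau_i - \log\eps_n - \gamma + 1)(e^{-\tau_i}-e^{-(\tau_i+h)})$, which Taylor-expands to $he^{-\tau_i}(1 - \gamma - \log\eps_n + \tau_i) + O(h^2\sqrt{\log n})$ since $|\tau_i - \log\eps_n - \gamma + 1| = O(\sqrt{\log n})$.

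The main obstacle is the careful bookkeeping of error terms: the cancellation of the $he^{-\tau_i}\log\log n$ contributions between the $e^{-u}$- and $ue^{-u}$-pieces is essential, for without it the error would be of order $h\log\log n$, violating the claimed bound. The underlying structural reason this works is that the two centering terms $ne^{-t}$ and $nte^{-t}\log\log n/\log n$ in (\ref{Xndef}) are precisely calibrated, via the integration-by-parts identity above, to absorb the leading and subleading contributions of the compensator $\eta^*(\cdot)/n$.
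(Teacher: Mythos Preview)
Your proposal is correct and follows essentially the same route as the paper: both start from the compensator representation (\ref{Zninc}), invoke (\ref{meancomp}) together with Lemma \ref{Nnbound} to replace $\eta^*(N_n(\cdot))$ by $C\cdot N_n(\cdot)$ with $C=\log n-\tau_i-\log\log n+\log\eps_n+\gamma-1$, substitute (\ref{Xndef}) for $N_n$, and then control the three resulting pieces. Your integration-by-parts identity for $\int u e^{-u}\,du$ is exactly the paper's computation $(\tau_i+1)e^{-\tau_i}-(\tau_i+h+1)e^{-(\tau_i+h)}$ in disguise, and your separation ``$|C/\log n-1|=O((\log n)^{-1/2})$'' is precisely how the paper isolates the last two lines of (\ref{etaint}) and bounds them by $3h(\log\log n)(\log n)^{-1/2}$.
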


\begin{proof}
Throughout the proof, we will work on the event that $i \leq J_n$ and $|X_n(s)| \leq \log \log n$ for all $s \in [\tau_i, \tau_i + h]$.
By the definition (\ref{Zndef}) of $Z_n$,
\begin{align}
Z_n(\tau_i + h) - Z_n(\tau_i) &= \frac{\log n}{n} \bigg( N_n \bigg( \frac{\tau_i + h}{\log n} \bigg) - N_n \bigg( \frac{\tau_i}{\log n} \bigg) - M_n(\tau_i + h) + M_n(\tau_i) \nonumber \\
&\hspace{.2in} - n e^{-(\tau_i + h)} + n e^{-\tau_i} - \frac{n(\tau_i + h) e^{-(\tau_i + h)} \log \log n}{\log n} + \frac{n \tau_i e^{-\tau_i} \log \log n}{\log n} \bigg). \nonumber
\end{align}
Using the definition of $M_n$ in (\ref{Mindef}) and (\ref{Mndef}),  we get
\begin{align}\label{Zninc}
Z_n(\tau_i + h) - Z_n(\tau_i) &= - \frac{\log n}{n} \int_{\tau_i/\log n}^{(\tau_i + h)/\log n} \eta^*(N_n(t)) \: dt + (\log n)(e^{-\tau_i} - e^{-(\tau_i + h)}) \nonumber \\
&\hspace{2in} + (\log \log n)(\tau_i e^{-\tau_i} - (\tau_i + h)e^{-(\tau_i + h)}).
\end{align}

To estimate the integral, we need to estimate $\eta^*(N_n(t))$.  Equation (\ref{meancomp}), which holds for sufficiently large $n$ even when $b = 1$ if we define $\eta^*(1) = 0$, gives
\begin{equation}\label{etapN}
\big| \eta^*(N_n(t)) - N_n(t)(\log N_n(t) - \log \log n + \log \eps_n + \gamma - 1) \big| \leq \frac{\log n}{\eps_n} + 1.
\end{equation}
It follows from (\ref{etapN}) and Lemma \ref{Nnbound}, after making the substitution $t = s/\log n$, that if $\tau_i/\log n \leq t \leq (\tau_i + h)/\log n$, then
\begin{equation}\label{etaapprox}
\big| \eta^*(N_n(t)) - N_n(t)(\log n - \log \log n + \log \eps_n + \gamma - 1 - \tau_i) \big| \leq \frac{9 n (\log \log n)^2}{\log n}
\end{equation}
for sufficiently large $n$.

Using (\ref{Xndef}), we have
\begin{align}
\int_{\tau_i/\log n}^{(\tau_i + h)/\log n} N_n(t) \: dt &= \frac{1}{\log n} \int_{\tau_i}^{\tau_i + h} N_n \bigg( \frac{s}{\log n} \bigg) \: ds \nonumber \\
&= \frac{1}{\log n} \int_{\tau_i}^{\tau_i + h} n e^{-s} + \frac{n s e^{-s} \log \log n}{\log n} + \frac{n X_n(s)}{\log n} \: ds \nonumber \\
&= \frac{n (e^{-\tau_i} - e^{-(\tau_i + h)})}{\log n} +  \frac{n \log \log n}{(\log n)^2} \big((\tau_i + 1)e^{-\tau_i} - (\tau_i + h + 1)e^{-(\tau_i + h)} \big) \nonumber \\
&\hspace{2.5in}+ \frac{n}{(\log n)^2} \int_{\tau_i}^{\tau_i + h} X_n(s) \: ds. \nonumber
\end{align}
Combining this result with (\ref{etaapprox}) and letting $\xi$ denote an expression whose absolute value is less than $9h(\log \log n)^2/(\log n)$, we get
\begin{align}\label{etaint}
\frac{\log n}{n} &\int_{\tau_i/\log n}^{(\tau_i + h)/\log n} \eta^*(N_n(t)) \: dt \nonumber \\
&= \frac{(\log n)(\log n - \log \log n + \log \eps_n + \gamma - 1 - \tau_i)}{n} \int_{\tau_i/\log n}^{(\tau_i + h)/\log n} N_n(t) \: dt + \xi \nonumber \\
&= (\log n - \log \log n + \log \eps_n + \gamma - 1 - \tau_i)(e^{-\tau_i} - e^{-(\tau_i + h)}) \nonumber \\
&\hspace{.2in}+ (\log \log n) \big( (\tau_i + 1)e^{-\tau_i} - (\tau_i + h + 1)e^{-(\tau_i + h)}) \big) + \int_{\tau_i}^{\tau_i + h} X_n(s) \: ds \nonumber \\
&\hspace{.2in}+ \bigg( \frac{(-\log \log n + \log \eps_n + \gamma - 1 - \tau_i)(\log \log n)}{\log n}\bigg) \big((\tau_i + 1)e^{-\tau_i} - (\tau_i + h + 1)e^{-(\tau_i + h)} \big) \nonumber \\
&\hspace{.2in}+  \bigg( \frac{-\log \log n + \log \eps_n + \gamma - 1 - \tau_i}{\log n}\bigg) \int_{\tau_i}^{\tau_i + h} X_n(s) \: ds + \xi. 
\end{align}
Recall that $\log \eps_n = -(\log n)^{1/2}$.  One can show using calculus that if $g(x) = (x+1)e^{-x}$, then $|g'(x)| \leq 1$ for all $x > 0$.  Consequently, we have $|(\tau_i + 1)e^{-\tau_i} - (\tau_i + h + 1)e^{-(\tau_i + h)})| \leq h$.  Also, because we are assuming $|X_n(s)| \leq \log \log n$ for all $s \in [\tau_i \tau_i + h]$, we have $|\int_{\tau_i}^{\tau_i + h} X_n(s) \: ds| \leq h \log \log n$.  It follows from these observations that the sum of the terms on the last two lines of (\ref{etaint}) is bounded by $3h(\log \log n)(\log n)^{-1/2}$ for sufficiently large $n$.  Therefore,
\begin{align}
\frac{\log n}{n} \int_{\tau_i/\log n}^{(\tau_i + h)/\log n} \eta^*(N_n(t)) \: dt &= (\log n)(e^{-\tau_i} - e^{-(\tau_i + h)}) + (\log \log n)(\tau_i e^{-\tau_i} - (\tau_i + h)e^{-(\tau_i + h)}) \nonumber \\
&\hspace{.2in}+ (\log \eps_n + \gamma - 1 - \tau_i)(e^{-\tau_i} - e^{-(\tau_i + h)}) + \int_{\tau_i}^{\tau_i + h} X_n(s) \: ds + \xi', \nonumber
\end{align}
where $|\xi'| \leq 3 h (\log \log n)(\log n)^{-1/2}$.  Combining this result with (\ref{Zninc}), we get
$$Z_n(\tau_i + h) - Z_n(\tau_i) = (1 - \gamma - \log \eps_n + \tau_i)(e^{-\tau_i} - e^{-(\tau_i + h)}) - \int_{\tau_i}^{\tau_i + h} X_n(s) \: ds - \xi'.$$  Because $-\log \eps_n = (\log n)^{1/2}$ and $|(e^{-\tau_i} - e^{-(\tau_i + h)}) - he^{-\tau_i}| \leq h^2/2$, the result follows.
\end{proof}

\begin{Lemma}\label{Bnlem}
Define the events
\begin{align}
B_{1,n} &= \bigg\{ \sup_{0 \leq s \leq T} \sup_{t \geq 0} e^{-t} |S(s,t)| \leq \frac{1}{2} \log \log n \bigg\}. \nonumber \\
B_{2,n} &= \bigg\{ \sup_{0 \leq s \leq T} \sup_{0 \leq t \leq T_n+s} |S(s,t)| \leq 5 (\log \log n)^2 \bigg\}. \nonumber \\
B_{3,n} &= \bigg\{ \sup_{0 \leq s \leq T} \sup_{0 \leq t \leq T_n+s} |S_n(s,t) - S(s,t)| \leq 1 \bigg\}. \nonumber
\end{align}
Let $B_n = B_{1,n} \cap B_{2,n} \cap B_{3,n}$.  Then $\lim_{n \rightarrow \infty} P(B_n) = 1$.
\end{Lemma}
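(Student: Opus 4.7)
The plan is to show each of the three events $B_{1,n}$, $B_{2,n}$, $B_{3,n}$ has probability tending to one and then apply a union bound.

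For $B_{1,n}$, I will invoke bound (\ref{etS}) of Lemma \ref{Slem}. Given any $\eps > 0$, that bound furnishes a constant $K = K(\eps)$ such that $P(\sup_{0 \le s \le T}\sup_{t \ge 0} e^{-t}|S(s,t)| > K) < \eps$. For all $n$ large enough that $\tfrac{1}{2}\log\log n \ge K$, this immediately gives $P(B_{1,n}^c) < \eps$, so $\limsup_{n \to \infty} P(B_{1,n}^c) \le \eps$. Since $\eps$ was arbitrary, $P(B_{1,n}) \to 1$.

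For $B_{2,n}$, the conclusion is literally the content of (\ref{supS}) in Lemma \ref{Slem}, so $P(B_{2,n}) \to 1$ with no further work. For $B_{3,n}$, Lemma \ref{supSnS} gives $\sup_{0 \le s \le T}\sup_{0 \le t \le T_n+s} |S_n(s,t) - S(s,t)| \to_p 0$, and in particular the probability that this supremum exceeds the fixed threshold $1$ tends to $0$, so $P(B_{3,n}) \to 1$.

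Finally, by a union bound,
\[
P(B_n^c) \le P(B_{1,n}^c) + P(B_{2,n}^c) + P(B_{3,n}^c) \longrightarrow 0,
\]
which completes the proof. There is no real obstacle here; the lemma is essentially a repackaging of (\ref{etS}), (\ref{supS}), and Lemma \ref{supSnS} into a single high-probability event $B_n$ convenient for use in the subsequent coupling arguments.
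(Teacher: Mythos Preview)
Your proof is correct and follows essentially the same approach as the paper: it too derives $P(B_{1,n})\to 1$ and $P(B_{2,n})\to 1$ directly from (\ref{etS}) and (\ref{supS}) of Lemma~\ref{Slem}, and $P(B_{3,n})\to 1$ from Lemma~\ref{supSnS}. Your treatment of $B_{1,n}$ via an $\eps$--$K$ argument is slightly more explicit than the paper's one-line invocation, but the substance is identical.
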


\begin{proof}
Note that $B_{1,n}$ is the complement of the event on the right-hand side of (\ref{etS}) with $\frac{1}{2} \log \log n$ in place of $K$, and $B_{2,n}$ is the complement of the event in (\ref{supS}).  Therefore, by Lemma \ref{Slem}, we have $\lim_{n \rightarrow \infty} P(B_{1,n}) = \lim_{n \rightarrow \infty} P(B_{2,n}) = 1$.  We also have $\lim_{n \rightarrow \infty} P(B_{3,n}) = 1$ by Lemma \ref{supSnS}.
\end{proof}

Recall that the event $A_n$ was defined in Lemma \ref{Anlem}.  The next lemma shows that when $A_n$ and $B_n$ occur, it is unlikely that the processes $X_n$ and $Y_n$ are ever far apart before time $T_n$.  In view of Lemmas \ref{Anlem} and \ref{Bnlem}, this result implies (\ref{XYprob}), and therefore Theorem \ref{numblocksthm}.

\begin{Lemma}\label{mainXYprob}
We have $$P \bigg( A_n \cap B_n \cap \bigg\{ \sup_{0 \leq t \leq T_n} |X_n(t) - Y_n(t)| > \frac{4}{(\log n)^{1/16}} \bigg\} \bigg) \leq \frac{T_n (\log n)^{3/2}}{n^{1/2} \eps_n} + 4 (\log n)^2 T_n \eps_n$$ for sufficiently large $n$.
\end{Lemma}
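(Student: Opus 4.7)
The plan is a Gronwall-type bound for $\Lambda_n(t) := Z_n(t) - Y_n(t) = X_n(t) - Y_n(t) - (\log n/n) M_n(t)$, exploiting the fact that both $Z_n$ and $Y_n$ drift toward zero at unit rate, so that the Gronwall factor is $2$ rather than $e^{T_n} = (\log n)^2$. Introduce the event $E_1 = \{(\log n/n) \sup_{0 \leq t \leq T_n}|M_n(t)| \leq 1/\log n\}$ and let $E_2$ denote the (good) event of Lemma~\ref{couplejumps}. Lemmas~\ref{martlem} and \ref{couplejumps} give $P(E_1^c) \leq 4 (\log n)^2 T_n \eps_n$ and $P(A_n \cap E_2^c) \leq T_n(\log n)^{3/2}/(n^{1/2}\eps_n)$, so by a union bound it suffices to show deterministically that on $A_n \cap B_n \cap E_1 \cap E_2$ one has $\sup_{[0,T_n]}|X_n - Y_n| \leq 4/(\log n)^{1/16}$ for all sufficiently large $n$.

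Define the stopping time $\sigma := \inf\{t \geq 0 : |X_n(t) - Y_n(t)| > 3/(\log n)^{1/16}\}$, and suppose toward contradiction that $\sigma \leq T_n$. On $B_n$, the bounds from Lemmas~\ref{Slem} and \ref{supSnS} yield $|Y_n(s)| \leq \tfrac{1}{2}\log\log n + 2$ for every $s \in [0, T_n]$, so for $s < \sigma$ and $n$ large, $|X_n(s)| < \log\log n$; this is exactly the hypothesis needed to apply Lemmas~\ref{couplejumps} and \ref{Zdrift} throughout $[0, \sigma)$. On $A_{3,n}$ each inter-jump interval has length $\tau_{i+1} - \tau_i \leq \eps_n \log n$, so subtracting the conclusion of Lemma~\ref{Ydrift} from that of Lemma~\ref{Zdrift} on $[\tau_i, \tau_i + h] \subset [0, \sigma)$ cancels the common drift term $h e^{-\tau_i}(1 - \gamma - \log \eps_n + \tau_i)$ and produces
$$\Lambda_n(\tau_i + h) - \Lambda_n(\tau_i) = -\int_{\tau_i}^{\tau_i + h}(X_n(s) - Y_n(s)) \: ds + O\!\bigl(h^2 \log n + h (\log\log n)(\log n)^{-1/2}\bigr).$$
Summing these increments over the intervals up to $\sigma$, adding the jump mismatches at the $\tau_i$'s (controlled in $\ell^1$ by Lemma~\ref{couplejumps}), and substituting $X_n - Y_n = \Lambda_n + (\log n/n) M_n$, one obtains the linear integral equation
$$\Lambda_n(t) = -\int_0^t \Lambda_n(s) \: ds + H(t), \qquad 0 \leq t \leq \sigma,$$
with $\Lambda_n(0) = 0$, where $H(t)$ aggregates three contributions: (i) the jump differences, total at most $1/(\log n)^{1/8}$ on $E_2$; (ii) the summed drift errors, at most $2 T_n \eps_n (\log n)^2 + 3 T_n (\log\log n)(\log n)^{-1/2}$; and (iii) $-(\log n/n)\int_0^t M_n(s) \: ds$, at most $T_n/\log n$ on $E_1$. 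For large $n$, contributions (ii) and (iii) are $o(1/(\log n)^{1/8})$, so $\sup_{[0,\sigma]}|H| \leq 2/(\log n)^{1/8}$.

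Because the drift is \emph{stabilizing}, variation of parameters solves the equation as $\Lambda_n(t) = H(t) - \int_0^t e^{-(t-s)} H(s) \: ds$, whence $|\Lambda_n(t)| \leq 2 \sup_{[0,t]}|H| \leq 4/(\log n)^{1/8}$, and therefore $|X_n(t) - Y_n(t)| \leq |\Lambda_n(t)| + (\log n/n)|M_n(t)| \leq 4/(\log n)^{1/8} + 1/\log n$ for every $t \leq \sigma$. For $n$ sufficiently large, $4/(\log n)^{1/8} + 1/\log n < 3/(\log n)^{1/16}$, which by right-continuity of $X_n - Y_n$ contradicts the defining inequality $|X_n(\sigma) - Y_n(\sigma)| \geq 3/(\log n)^{1/16}$ at $\sigma \leq T_n$. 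Hence $\sigma > T_n$ on $A_n \cap B_n \cap E_1 \cap E_2$, and the lemma follows. The main obstacle is the simultaneous quantitative control of all four error sources (jump coupling, two drift approximations, the $M_n$ residual) at the common scale $1/(\log n)^{1/8}$; the key structural feature is that the drift has the stabilizing sign, because an amplifying drift would inject a Gronwall factor $e^{T_n} = (\log n)^2$ that would swamp every one of these error bounds.
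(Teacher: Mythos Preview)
Your proposal is correct, and the strategy is essentially the same as the paper's---reduce to the good event $A_n\cap B_n\cap E_1\cap E_2$ via Lemmas~\ref{martlem} and~\ref{couplejumps}, set a stopping time at level $3/(\log n)^{1/16}$ to keep $|X_n|\leq\log\log n$, and then combine Lemmas~\ref{Ydrift} and~\ref{Zdrift}---but the way you close the argument is genuinely different and somewhat cleaner. The paper works with $\zeta=\inf\{t:|Z_n(t)-Y_n(t)|>3/(\log n)^{1/16}\}$ and proceeds by a three-case analysis (the sets $I_1,I_2,I_3$): when $|Z_n(\tau_i)-Y_n(\tau_i)|$ is small it bounds the change over a single interval directly, and when $Z_n-Y_n$ has a definite sign it argues that the sign cannot flip within an interval and that the stabilizing drift term $-\int(Z_n-Y_n)\,ds$ then only helps, so the excess can only come from the accumulated jump and drift errors. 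You instead write the \emph{exact} identity $\Lambda_n(t)=-\int_0^t\Lambda_n(s)\,ds+H(t)$ and invert it by variation of parameters as $\Lambda_n(t)=H(t)-\int_0^t e^{-(t-s)}H(s)\,ds$, so that $|\Lambda_n|\leq 2\sup|H|$ immediately. This replaces the paper's casework with a single line, at the cost of a small side remark that $\Lambda_n$ (hence $H$) is continuous on each $(\tau_i,\tau_{i+1})$---which follows from~(\ref{Zninc})---so that the drift bound extends by continuity to the endpoint $\sigma$ even though Lemma~\ref{Zdrift} formally requires $|X_n|\leq\log\log n$ on the closed interval.
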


\begin{proof}
We claim that on the event
\begin{align}\label{bigevent}
A_n &\cap B_n \cap \bigg\{ \frac{\log n}{n} \sup_{0 \leq t \leq T_n} |M_n(t)| \leq \frac{1}{\log n} \bigg\} \nonumber \\
&\cap \bigg\{ \sum_{i=1}^{J_n} \big| (Z_n(\tau_i) - Z_n(\tau_i-)) - (Y_n(\tau_i) - Y_n(\tau_i-)) \big| {\bf 1}_{\{|X_n(\tau_i-)| \leq \log \log n\}} \leq \frac{1}{(\log n)^{1/8}} \bigg\},
\end{align}
we have $|Z_n(t) - Y_n(t)| \leq 3/(\log n)^{1/16}$ for all $t \leq T_n$, and therefore $|X_n(t) - Y_n(t)| \leq 4/(\log n)^{1/16}$ for all $t \leq T_n$ by (\ref{Zndef}).  Thus, by Lemmas \ref{martlem} and \ref{couplejumps}, this claim implies the result.

To prove the claim, let $\zeta = \inf\{t \leq T_n: |Z_n(t) - Y_n(t)| > 3/(\log n)^{1/16}\}$.  We assume that we are working on the event in (\ref{bigevent}), and we must show that $\zeta$ is the infimum of the empty set, and thus that $\zeta = \infty$.  On $B_{1,n} \cap B_{3,n}$, we have
\begin{equation}\label{YB1n}
Y_n(t) \leq \frac{1}{2} \log \log n + \frac{e^{-t} t^2}{2} + |Y_n(t) - Y(t)| \leq \frac{1}{2} \log \log n + 2
\end{equation}
for all $t \leq T_n$.  Therefore, if $t < \zeta$ and $n$ is sufficiently large, then
\begin{align}
|X_n(t)| &\leq |Y_n(t)| + |Z_n(t) - Y_n(t)| + \frac{\log n}{n}|M_n(t)| \nonumber \\
&\leq \frac{1}{2}\log \log n + 2 + \frac{3}{(\log n)^{1/16}} + \frac{1}{\log n} \nonumber \\
&\leq \log \log n.
\end{align}
Note also that $S_n(t) \leq \frac{1}{4} \log n$ for all $t \leq T_n$ on $B_{2,n} \cap B_{3,n}$ for sufficiently large $n$, and $\tau_{i+1} - \tau_i \leq \eps_n \log n$ for $i = 0, 1, \dots, J_n$ on $A_{3,n}$.   Therefore, Lemmas \ref{Ydrift} and \ref{Zdrift} imply that if $\tau_i + h < \zeta$ and $h < \tau_{i+1} - \tau_i$, then
\begin{align}
&\bigg|(Z_n(\tau_i + h) - Z_n(\tau_i)) - (Y_n(\tau_i + h) - Y_n(\tau_i)) + \int_{\tau_i}^{\tau_i + h} (X_n(t) - Y_n(t)) \: dt \bigg| \nonumber \\
&\hspace{3in} \leq \frac{3h \log \log n}{(\log n)^{1/2}} + h^2 (\log n)^{1/2} + h^2 \log n. \nonumber
\end{align}
Since $\int_{\tau_i}^{\tau_i + h} |X_n(t) - Z_n(t)| \: dy \leq h/(\log n)$, it follows that
\begin{equation}\label{YZdrift}
\bigg|(Z_n(\tau_i + h) - Z_n(\tau_i)) - (Y_n(\tau_i + h) - Y_n(\tau_i)) + \int_{\tau_i}^{\tau_i + h} (Z_n(t) - Y_n(t)) \: dt \bigg| \leq \frac{4h \log \log n}{(\log n)^{1/2}}
\end{equation}
for sufficiently large $n$.

Define the sets
\begin{align}
I_1 &= \bigg\{i \in \{0, 1, \dots, J_n + 1\}: |Z_n(\tau_i) - Y_n(\tau_i)| \leq \frac{1}{(\log n)^{1/16}} \bigg\}, \nonumber \\
I_2 &= \bigg\{i \in \{0, 1, \dots, J_n + 1\}: Z_n(\tau_i) - Y_n(\tau_i) > \frac{1}{(\log n)^{1/16}} \bigg\}, \nonumber \\
I_3 &= \bigg\{i \in \{0, 1, \dots, J_n + 1\}: Z_n(\tau_i) - Y_n(\tau_i) < - \frac{1}{(\log n)^{1/16}} \bigg\}. \nonumber
\end{align}
Clearly, we have $I_1 \cup I_2 \cup I_3 = \{0, 1, \dots, J_n + 1\}$.  By right continuity and the fact that $Z_n(0) = Y_n(0) = 0$, we have $\zeta > 0 = \tau_0$.  Suppose that $\zeta \in (\tau_i, \tau_{i+1})$ for some $i \in I_1$.  Then, by (\ref{YZdrift}) and the fact that $\tau_{i+1} - \tau_i \leq \eps_n \log n$,
\begin{align}\label{I1bd}
|Z_n(\zeta) - Y_n(\zeta)| &\leq |Z_n(\tau_i) - Y_n(\tau_i)| + \int_{\tau_i}^{\zeta} |Z_n(t) - Y_n(t)| \: dt + \frac{4 (\zeta - \tau_i) \log \log n}{(\log n)^{1/2}} \nonumber \\
&\leq \frac{1}{(\log n)^{1/16}} + \eps_n \log n \cdot \frac{3}{(\log n)^{1/16}} + 4 \eps_n (\log n)^{1/2} \log \log n,
\end{align}
a contradiction for sufficiently large $n$ because $|Z_n(\zeta) - Y_n(\zeta)| \geq 3/(\log n)^{1/16}$ by right continuity.  Thus, if $i \in I_1$, then $\zeta \geq \tau_{i+1}$.  Therefore, if $i \in I_1$, then reasoning as in (\ref{I1bd}) and using the fact that we are working on the event (\ref{bigevent}), we get
\begin{align}\label{I1jump}
|Z_n(\tau_{i+1}) - Y_n(\tau_{i+1})| &\leq |Z_n(\tau_i) - Y_n(\tau_i)| + \int_{\tau_i}^{\tau_{i+1}} |Z_n(t) - Y_n(t)| \: dt + \frac{4 (\tau_{i+1} - \tau_i) \log \log n}{(\log n)^{1/2}} \nonumber \\
&\hspace{.5in}+ |(Z_n(\tau_{i+1}) - Z_n(\tau_{i+1}-)) - (Y_n(\tau_{i+1}) - Y_n(\tau_{i+1}-))| \nonumber \\
&\leq \frac{1}{(\log n)^{1/16}} + \eps_n \log n \cdot \frac{3}{(\log n)^{1/16}} + 4 \eps_n (\log n)^{1/2} \log \log n + \frac{1}{(\log n)^{1/8}} \nonumber \\
&\leq \frac{2}{(\log n)^{1/16}} 
\end{align}
for sufficiently large $n$, which implies that $\zeta \neq \tau_{i+1}$.

Next, suppose that $i \in I_2$.  Let $\rho = \inf\{t > \tau_i: Z_n(t) \leq Y_n(t)\}$.  Suppose $\rho \leq \min\{\zeta, \tau_{i+1}\}$.
Then $0 \leq Z_n(s) - Y_n(s) \leq 3/(\log n)^{1/16}$ for all $s < \rho$.  Therefore by (\ref{YZdrift}),
\begin{align}
Z_n(\rho) - Y_n(\rho) &\geq Z_n(\tau_i) - Y_n(\tau_i) - \int_{\tau_i}^{\rho} (Z_n(t) - Y_n(t)) \: dt - \frac{4(\rho - \tau_i) \log \log n}{(\log n)^{1/2}} - \frac{1}{(\log n)^{1/8}} \nonumber \\
&\geq \frac{1}{(\log n)^{1/16}} - \eps_n \log n \cdot \frac{3}{(\log n)^{1/16}} - 4 \eps_n (\log n)^{1/2} \log \log n - \frac{1}{(\log n)^{1/8}},
\end{align}
which is positive for sufficiently large $n$, a contradiction.  Therefore, for sufficiently large $n$, if $i \in I_2$, then $Z_n$ must stay greater than $Y_n$ from time $\tau_i$ until after time $\min\{\zeta, \tau_{i+1}\}$.  In particular, if $i \in I_2$ and $\zeta \geq \tau_{i+1}$, then $i+1 \in I_1 \cup I_2$.  By the same argument with the roles of $Y_n$ and $Z_n$ reversed, if $i \in I_3$ and $\zeta \geq \tau_{i+1}$, then $i+1 \in I_1 \cup I_3$.  It follows from these observations and (\ref{I1jump}) that the only way we could have $\zeta \in (\tau_i, \tau_{i+1}]$ with $i \in I_2$ is if there exists $j < i$ such that $j \in I_1$, $j+1, j+2, \dots, i \in I_2$, and $$Z_n(\zeta) - Y_n(\zeta) > Z_n(\tau_{j+1}) - Y_n(\tau_{j+1}) + \frac{1}{(\log n)^{1/16}}.$$  However, if this is true, then $Z_n(t) > Y_n(t)$ for all $t \in [\tau_{j+1}, \zeta]$.  Therefore, using (\ref{YZdrift}) and the fact that we are working on the event in (\ref{bigevent}), we have
\begin{align}
Z_n(\zeta) - Y_n(\zeta) &\leq Z_n(\tau_{j+1}) - Y_n(\tau_{j+1}) + \sum_{k=j+2}^{i+1} \big| (Z_n(\tau_k) - Z_n(\tau_k-)) - (Y_n(\tau_k) - Y_n(\tau_k-)) \big| \nonumber \\
&\hspace{2in}+ \frac{4(\zeta - \tau_{j-1}) (\log \log n)}{(\log n)^{1/2}} \nonumber \\
&\leq Z_n(\tau_{j+1}) - Y_n(\tau_{j+1}) + \frac{1}{(\log n)^{1/8}} + \frac{4 T_n (\log \log n)}{(\log n)^{1/2}}. \nonumber
\end{align}
For sufficiently large $n$, the sum of the last two terms on the right-hand side is less than $1/(\log n)^{1/16}$, a contradiction.  Hence, we can not have $\zeta \in (\tau_i, \tau_{i+1}]$ for $i \in I_2$, and the same argument with the roles of $Y_n$ and $Z_n$ reversed establishes that we can not have $\zeta \in (\tau_i, \tau_{i+1}]$ with $i \in I_3$.  We conclude that $\zeta = \infty$, which completes the proof.
\end{proof}

\subsection{Extension to arbitrary starting times}

The result (\ref{XYprob}) pertains to the evolution of the number of lineages when we trace back the ancestral lines of the individuals in the population at time zero.  Of course, by stationarity, the analogous result holds if we instead trace back the ancestral lines of the individuals in the population at some other time $s \geq 0$.  However, to help with the proof of Theorem \ref{lengththm} in the next section, we will need a stronger version of this result that will make it possible to show that the approximation works well simultaneously at many times.  The key to this result is that the events $A_n$ and $B_n$ were defined so that the bounds that hold on these events are valid simultaneously for all $s \in [0,T]$.

Recall that $N_n(s,t)$ denotes the number of individuals in the population immediately before time $s-t$ with a descendant alive in the population at time $s$.  Let ${\tilde N}_n(s,t) = N_n(s,t) {\bf 1}_{\{N(s,t) > 1\}}$, which will be convenient because
\begin{equation}\label{Lntilde}
L_n(s) = \int_0^{\infty} {\tilde N}_n(s,t) \: dt.
\end{equation}
Let $$X_n(s,t) = \frac{\log n}{n} \bigg( {\tilde N}_n \bigg( \frac{s}{\log n}, \frac{t}{\log n} \bigg) - n e^{-t} - \frac{nt e^{-t} \log \log n}{\log n} \bigg).$$  Also, let $$Y_n(s,t) = e^{-t} S_n(s,t) + \frac{e^{-t} t^2}{2}.$$  Note that $X_n(0,t) = X_n(t)$ for all $t \leq (\log n) \inf\{s: N_n(s) = 1\}$ and $Y_n(0,t) = Y_n(t)$ for all $t \geq 0$.

We have the following extension of Lemma \ref{mainXYprob}.  The result follows from the same argument that gives Lemma \ref{mainXYprob}.  We have replaced $4/(\log n)^{1/16}$ by $5/(\log n)^{1/16}$ to account for the error in replacing $N_n(s,t)$ by ${\tilde N}_n(s,t)$.  Indeed, it would be possible to use $4/(\log n)^{1/16} + (\log n)/n$ because $|N_n(s,t) - {\tilde N}_n(s,t)| \leq 1$ for all $s$ and $t$.  Also, because an interval of time $T_n + s$ rather than $T_n$ must be considered when adapting the proofs of Lemmas \ref{martlem} and \ref{couplejumps}, the bounds involving $T_n$ have been replaced by bounds involving $T_n + T$.

\begin{Lemma}\label{mainXYprob2}
For sufficiently large $n$, we have
\begin{align}
P \bigg( A_n \cap B_n \cap \bigg\{ \sup_{0 \leq t \leq T_n+s} |X_n(s,t) &- Y_n(s,t)| > \frac{5}{(\log n)^{1/16}} \bigg\} \bigg) \nonumber \\
&\leq \frac{(T_n + T) (\log n)^{3/2}}{n^{1/2} \eps_n} + 4 (\log n)^2 (T_n + T) \eps_n \nonumber
\end{align}
for each fixed $s \in [0, T]$.
\end{Lemma}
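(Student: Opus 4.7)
The plan is to run the proof of Lemma \ref{mainXYprob} essentially verbatim, but with everything relativized to the starting time $s$. The key design choice that makes this possible is that the events $A_n$ and $B_n$ from Lemmas \ref{Anlem} and \ref{Bnlem} were set up to hold uniformly for $s \in [0,T]$: $A_n$ only refers to points of $\Psi$ in $[-T, T_n] \times [\eps_n, \infty)$, which contains the strip $[-s, T_n] \times [\eps_n, \infty)$ relevant for starting time $s$, and $B_n$ directly takes its suprema over $s \in [0,T]$ and $t \in [0, T_n+s]$.

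First I would set up the shifted objects: let $0 < \tilde{\tau}_1 < \cdots < \tilde{\tau}_{J_n^s} < T_n+s$ be the jump times of $(S_n(s,t), t \geq 0)$ on $[0, T_n+s]$, obtained from the points of $\Psi$ in $(-s, T_n] \times (\eps_n, \infty)$; define a compensated-jump martingale $M_n^s$ for the coalescent process $(\tilde{N}_n(s/\log n, t/\log n), t \geq 0)$ by mimicking \eqref{Mindef}--\eqref{Mndef}; and set $Z_n(s,t) = X_n(s,t) - (\log n / n) M_n^s(t)$. The analogues of Lemmas \ref{martlem} and \ref{couplejumps} over the horizon $[0, T_n+s] \subset [0, T_n+T]$ then go through with the same $L^2$ and variance computations, except that $T_n$ is replaced by $T_n+T$; this is exactly what produces the $(T_n+T)$ factors in the bound to be proved. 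The local drift estimates of Lemmas \ref{Ydrift}, \ref{Nnbound}, and \ref{Zdrift} transfer without change, since their hypotheses (inter-jump spacing at most $\eps_n \log n$, from $A_{3,n}$, and $|X_n(s,\cdot)| \leq \log \log n$ on the relevant interval) carry over verbatim, and the latter is preserved inductively just as in the original argument.

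With these pieces in place, I would repeat the contradiction argument of Lemma \ref{mainXYprob}: define $\zeta = \inf\{t \leq T_n+s : |Z_n(s,t) - Y_n(s,t)| > 3/(\log n)^{1/16}\}$, partition $\{0, 1, \dots, J_n^s + 1\}$ into the three classes $I_1, I_2, I_3$ according to where $Z_n(s,\tilde{\tau}_i) - Y_n(s,\tilde{\tau}_i)$ sits, and conclude $\zeta = \infty$ by the same three-case analysis. The only genuinely new ingredient is the switch from $N_n$ to $\tilde{N}_n$, but $|N_n - \tilde{N}_n| \leq 1$ pointwise contributes at most $(\log n)/n$ after multiplication by $\log n / n$, which is easily absorbed by bumping the threshold from $4/(\log n)^{1/16}$ up to $5/(\log n)^{1/16}$. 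I do not expect a substantive obstacle; the entire purpose of the way $A_n$ and $B_n$ were defined was to secure uniform-in-$s$ control, so once that uniformity is noted, this extension amounts to a careful bookkeeping exercise rather than new probabilistic content.
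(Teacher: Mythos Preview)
Your proposal is correct and mirrors the paper's own treatment: the paper does not give a separate proof but simply states that the result follows from the same argument as Lemma~\ref{mainXYprob}, noting precisely the three adjustments you identify---the uniform-in-$s$ design of $A_n$ and $B_n$, the replacement of $T_n$ by $T_n+T$ when adapting Lemmas~\ref{martlem} and~\ref{couplejumps}, and the bump from $4/(\log n)^{1/16}$ to $5/(\log n)^{1/16}$ to absorb the $(\log n)/n$ error from passing to~$\tilde N_n$.
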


\section{Proof of Theorem \ref{lengththm}}\label{lengthsec}

Recall that for each $s \in \R$, a process $(S(s,t), t \geq 0)$ was defined in section \ref{constsec}.  Let $J(s) = y$ if $(-s, y)$ is a point of $\Psi$, and let $J(s) = 0$ otherwise.  For each $s \in \R$ and $t \geq 0$, let $$Y(s,t) = e^{-t} S(s,t) + \frac{e^{-t} t^2}{2},$$ and then let $$L(s) = -J(s) + \int_0^{\infty} Y(s,t) \: dt.$$  Note that $P(J(s) = 0) = 1$ for each $s \geq 0$, but the $-J(s)$ term keeps the process $(L(s), s \geq 0)$ right continuous at the jump times and thus ensures that $(L(s), s \geq 0)$ has paths that are almost surely right continuous with left limits.  To see this, note that if $(-s, y)$ is a point of $\Psi$, then $S(s+, t) = S(s,t) - y$ and thus $Y(s+, t) = Y(s,t) - e^{-t} y$ for all $t > 0$.  Therefore, $$\int_0^{\infty} Y(s+, t) \: dt = \int_0^{\infty} Y(s,t) \: dt - y.$$
The next result shows that $(L(s), s \geq 0)$ has the same law as the process defined in the statement of Theorem \ref{lengththm}.

\begin{Prop}\label{LOUprop}
Let $\nu$ be the measure on $\R$ whose density with respect to Lebesgue measure is given by $x^{-2} {\bf 1}_{(-\infty, 0)}(x)$.  The process $(L(s), s \geq 0)$ is a process of Ornstein-Uhlenbeck type generated by $(2 - \gamma, 0, \nu, 1)$.
\end{Prop}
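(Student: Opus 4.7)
The plan is to construct from $\Psi$ a L\'evy process $\tilde Z$ with characteristic triplet $(2-\gamma, 0, \nu)$ for which the SDE
$$L(s) = L(0) + \tilde Z(s) - \int_0^s L(r)\, dr$$
holds almost surely, and then invoke the uniqueness clause of (\ref{OUSDE}) to identify $(L(s), s \ge 0)$ as the Ornstein--Uhlenbeck type process generated by $(2-\gamma, 0, \nu, 1)$.

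\textbf{The driving process and the SDE.} The natural choice is
$$\tilde Z(s) = (2-\gamma) s - \lim_{\varepsilon \to 0} \bigg[ \sum_{\substack{j:\, -s \le \sigma_j < 0 \\ x_j > \varepsilon}} x_j + s \log \varepsilon \bigg],$$
so that $\tilde Z$ is right continuous and jumps by $-y$ at time $s$ whenever $(-s,y) \in \Psi$. Because the restriction of $\Psi$ to $(-\infty,0)\times(0,\infty)$ is a Poisson random measure of intensity $du\, x^{-2}\, dx$, the same $L^2$-approximation used in Section \ref{constsec} to construct $S_n$ shows that $\tilde Z$ is a well-defined L\'evy process with the claimed triplet. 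To verify the SDE I would set $M(s) = L(s) - \tilde Z(s) + \int_0^s L(r)\, dr$ and show $M \equiv L(0)$. Writing $S(s,t) = \bar Z(t-s) - \bar Z(-s)$ in terms of the two-sided L\'evy process $\bar Z$ associated with $\Psi$, and using that $e^{-u}\bar Z(u) \to 0$ as $u \to \infty$ by Proposition 48.10 of \cite{sato}, a change of variables gives
$$L(s) = 1 - J(s) - \bar Z(-s) + e^{-s} \int_{-s}^{\infty} e^{-u} \bar Z(u)\, du.$$
Away from jump times (where $J(s) = 0$), differentiating this identity in $s$ yields $dL/ds = (2-\gamma) - L(s)$, which matches $d\tilde Z/ds - L(s)$, so $M$ is flat between jumps. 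At a jump time $s$ with $(-s,y) \in \Psi$, the discussion immediately preceding the statement of the proposition shows that $L$ is right continuous with $L(s) - L(s-) = -y$, matching the jump of $\tilde Z$; since $\int_0^s L(r)\, dr$ is continuous, $\Delta M(s) = 0$ as well. Hence $M$ is constant equal to $L(0)$, establishing the SDE.

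\textbf{Main obstacle.} The delicate point is the pointwise differentiation $dL/ds = (2-\gamma) - L(s)$: because both $\bar Z$ and $\tilde Z$ carry infinitely many small jumps, the drift $(1-\gamma)$ of $\bar Z$ is meaningful only after compensation, so the derivative should really be interpreted in the semimartingale sense. A clean way to handle this is to carry out the computation first in the $\varepsilon$-truncated processes from Section \ref{constsec}, where both the drift and the jumps are literal, verify the analogous SDE there, and then pass to the limit $\varepsilon \to 0$ using the $L^2$ control (\ref{Esup2}) together with dominated convergence. Handling this compensation consistently, together with the right/left continuity bookkeeping at the jumps of $L$, is the main technical task.
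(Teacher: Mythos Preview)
Your approach is essentially correct and closely parallels the paper's. You identify the same driving L\'evy process---the paper calls it $Z(t)=S(t,t)+t-J(t)$, which is your $\tilde Z$ up to endpoint conventions---and you aim to verify the same SDE (\ref{OUSDE}). Your closed-form expression $L(s)=1-J(s)-\bar Z(-s)+e^{-s}\int_{-s}^\infty e^{-u}\bar Z(u)\,du$ is, after an integration by parts, exactly the stochastic-integral representation (\ref{Lseq}) that the paper obtains.

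The one genuine difference is in how the SDE is checked. You propose to differentiate $L$ pointwise and match drift and jumps, then repair the fact that $\bar Z$ has no ``between jumps'' intervals by retreating to the $\varepsilon$-truncated processes and passing to the limit. This works, but you have correctly flagged the cost: at truncation level the drift is $(2-\gamma-\log\varepsilon_n)$ rather than $(2-\gamma)$, the integral $\int_0^\infty e^{-t}S_n(s,t)\,dt$ must be shown to converge to $\int_0^\infty e^{-t}S(s,t)\,dt$ (requiring tail control beyond (\ref{Esup2}), along the lines of Lemma~\ref{Slem}), and the uniform convergence needed to pass the identity to the limit has to be justified. None of this is hard, but it is bookkeeping.

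The paper sidesteps all of it. Instead of differentiating, it computes the \emph{integrated} quantity $L(t)-L(0)+\int_0^t L(s)\,ds$ directly: starting from (\ref{Lseq}), a second application of stochastic Fubini collapses everything to $t-J(t)+S(t,t)$, which is exactly $Z(t)$. No truncation, no limit, no pointwise differentiation---just two Fubini interchanges at the level of the limiting processes. If you want a cleaner write-up, that is the shortcut worth adopting; your representation of $L(s)$ already contains the ingredients.
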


\begin{proof}
Fix $t \geq 0$.  Note that if $0 \leq s \leq t$ and $u \geq 0$, then $S(s,u) = S(t, t-s + u) - S(t, t-s)$.  We use the notation $\int_0^{\infty} f(x) \: dS_t(x)$ to denote the stochastic integral of $f(x)$ with respect to the stable process $(S(t, x), x \geq 0)$.  If $0 \leq s \leq t$, then
\begin{align}\label{Lseq}
L(s) &= 1 - J(s) + \int_0^{\infty} e^{-u} S(s,u) \: du \nonumber \\
&= 1 - J(s) + \int_0^{\infty} e^{-u} \bigg( \int_{t-s}^{t-s+u} dS_t(x) \bigg) du \nonumber \\ 
&= 1 - J(s) + \int_{t-s}^{\infty} \bigg( \int_{x-t+s}^{\infty} e^{-u} \: du \bigg) dS_t(x) \nonumber \\
&= 1 - J(s) + \int_{t-s}^{\infty} e^{-(x-t+s)} \: dS_t(x), 
\end{align}
where in the next-to-last step we used Fubini's Theorem for general stochastic integrals (see, for example, Theorem 45 in Part IV of \cite{protter} and the remark on p. 161 in \cite{protter} that the measure $\mu$ in that theorem can be taken to be $\sigma$-finite rather than finite).  Note that $J(0) = 0$ almost surely, and almost surely $\int_0^t J(s) \: ds = 0$ for all $t$. Therefore, using Fubini's Theorem again and (\ref{Lseq}), we get
\begin{align}
L(t) - L(0) + \int_0^t L(s) \: ds &= t - J(t) + \int_0^{\infty} e^{-x} \: dS_t(x) - \int_t^{\infty} e^{-(x-t)} \: dS_t(x) \nonumber \\
&\hspace{.6in}+ \int_0^t \int_{t-s}^{\infty} e^{-(x-t+s)} \: dS_t(x) \: ds \nonumber \\
&= t - J(t) + \int_0^{\infty} e^{-x} \: dS_t(x) - \int_t^{\infty} e^{-(x-t)} \: dS_t(x) \nonumber \\
&\hspace{.6in}+ \int_0^{\infty} \int_{\max\{0, t-x\}}^t e^{-(x-t+s)} \: ds \:dS_t(x) \nonumber \\
&= t - J(t) + \int_0^t \bigg( e^{-x} + \int_{t-x}^t e^{-(x-t+s)} \: ds \bigg) dS_t(x) \nonumber \\
&\hspace{.6in}+ \int_t^{\infty} \bigg(e^{-x} - e^{-(x-t)} + \int_0^t e^{-(x-t+s)} \: ds \bigg) dS_t(x)
\end{align}
for all $t \geq 0$ almost surely.
Because $$e^{-x} + \int_{t-x}^t e^{-(x-t+s)} \: ds = 1$$ and $$e^{-x} - e^{-(x-t)} + \int_0^t e^{-(x-t+s)} \: ds = 0,$$ it follows that $$L(t) - L(0) + \int_0^t L(s) \: ds = t - J(t) + \int_0^t dS_t(x) = t - J(t) + S(t,t)$$ for all $t \geq 0$ almost surely.  Therefore, if we define $Z(t) = S(t,t) + t - J(t)$ for all $t \geq 0$, then
\begin{equation}\label{LOU}
L(t) = L(0) + Z(t) - \int_0^t L(s) \: ds,
\end{equation}
which is exactly (\ref{OUSDE}) with $c = 1$ and $L$ in place of $X$.

By the symmetry of the construction in subsection \ref{constsec}, the processes $(S(0,t), t \geq 0)$ and $(S(t,t) - J(t), t \geq 0)$ have the same law.  Indeed, if one reflects the points of the Poisson process about the vertical axis, so that a point at $(t,x)$ is moved to $(-t, x)$, and then follows the procedure used  to construct $(S(0,t), t \geq 0)$, one obtains the process $(S(t,t) - J(t), t \geq 0)$ provided that $J(0) = 0$.  Therefore, the process $(S(t,t) - J(t), t \geq 0)$ is a stable process whose characteristic exponent is given by the expression in (\ref{stablechf}).  Since $(Z(t), t \geq 0)$ differs from $(S(t,t) - J(t), t \geq 0)$ only by the addition of a linear drift of rate $1$, it follows that $(Z(t), t \geq 0)$ is a stable process whose characteristic exponent is obtained by replacing $1 - \gamma$ with $2 - \gamma$ on the right-hand side of (\ref{stablechf}).  This observation, combined with (\ref{LOU}), implies the result.  
\end{proof}

\begin{Rmk}
{\em The process $(L(s), s \geq 0)$ is clearly stationary by construction.  Thus, it follows from Propositon \ref{LOUprop} and the theory of processes of Ornstein-Uhlenbeck type reviewed in the introduction that the distribution of $L(0)$, and therefore the distribution of $L(s)$ for any fixed $s \geq 0$, has a characteristic function given by the right-hand side of (\ref{chfX}).  To observe this result more directly, use the Integration by Parts Formula (see, for example, Corollary 8.7 of \cite{kle}) to write that almost surely
\begin{equation}\label{L1d}
L(0) = \int_0^{\infty} Y(t) \: dt = 1 + \int_0^{\infty} e^{-t} S(t) \: dt = 1 + \int_0^{\infty} e^{-t} dS(t).
\end{equation}
The distribution of the stable integral on the right-hand side of (\ref{L1d}) can be evaluated using the theory developed in Chapter 3 of \cite{sam}.  In this case, we apply Proposition 3.4.1 of \cite{sam} with $m$ being $\pi/2$ times Lebesgue measure, $E = [0, \infty)$, $\sigma = \pi/2$, $\mu = 0$, $\beta = -1$, and $f(t) = e^{-t}$.  We get $\sigma_f = \pi/2$, $\beta_f = -1$, and $\mu_f = -\int_0^{\infty} te^{-t} \: dt = -1$ to recover the result.} 
\end{Rmk}

By (\ref{Lntilde}), we have
\begin{align}
&\frac{(\log n)^2}{n} \bigg(L_n \bigg( \frac{s}{\log n} \bigg) - \frac{n}{\log n} - \frac{n \log \log n}{(\log n)^2} \bigg) \nonumber \\
&\hspace{.3in}= \frac{(\log n)^2}{n} \int_0^{\infty} \frac{1}{\log n} {\tilde N}_n\bigg(\frac{s}{\log n}, \frac{t}{\log n} \bigg) - \frac{n e^{-t}}{\log n} - \frac{n t e^{-t} \log \log n}{(\log n)^2} \: dt = \int_0^{\infty} X_n(s, t) \: dt. \nonumber
\end{align}
Consequently, the following proposition will imply Theorem \ref{lengththm}.

\begin{Prop}\label{LWcouple}
We have $$\sup_{0 \leq s \leq T} \bigg|\int_0^{\infty} X_n(s,t) \: dt - L(s)\bigg| \rightarrow_p 0.$$
\end{Prop}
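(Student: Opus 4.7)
The plan is to view $D(s) := \int_0^\infty X_n(s,t)\,dt - L(s)$ as a right-continuous function of $s$, exploit the identity $\int_0^\infty X_n(s,t)\,dt = \frac{(\log n)^2}{n} L_n(s/\log n) - \log n - \log\log n$ together with $L(s) = -J(s) + \int_0^\infty Y(s,t)\,dt$, and split in the variable $t$ into the bulk region $t \leq T_n + s$ and the tail region $t > T_n + s$. Concretely, I would write
\begin{align*}
\int_0^\infty X_n(s,t)\,dt - L(s) &= \int_0^{T_n+s}(X_n-Y_n)(s,t)\,dt + \int_0^{T_n+s}(Y_n-Y)(s,t)\,dt \\
&\hspace{.2in}+ \int_{T_n+s}^\infty X_n(s,t)\,dt - \int_{T_n+s}^\infty Y(s,t)\,dt + J(s).
\end{align*}
The second summand is uniformly bounded by $\sup_{s,\,t \leq T_n+T}|S_n-S|(s,t) \cdot \int_0^\infty e^{-t}\,dt$ and vanishes in probability by Lemma \ref{supSnS}, and the $Y$-tail $\int_{T_n+s}^\infty Y(s,t)\,dt$ vanishes uniformly in probability by combining (\ref{Stail}) with $\int_{T_n}^\infty e^{-t}t^2\,dt \rightarrow 0$.

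For the main bulk piece $\int_0^{T_n+s}(X_n-Y_n)(s,t)\,dt$, Lemma \ref{mainXYprob2} gives the pointwise-in-$s$ bound $5(T_n+T)/(\log n)^{1/16}$ with failure probability $p_n$ that decays faster than any negative power of $\log n$.  To upgrade to a uniform bound in $s$, I would discretize $[0,T]$ on a grid of $K_n$ points chosen so that $K_n p_n \rightarrow 0$, apply a union bound at the grid, and control the variation between consecutive grid points using the Poisson coupling: on the high-probability event $A_n \cap B_n$ of Lemmas \ref{Anlem} and \ref{Bnlem}, at most $n^{1/4}$ large points of $\Psi$ lie in $[-T, T_n] \times [\eps_n, \infty)$ and their gaps are at most $\eps_n \log n$, so the between-grid variation of the integral is controlled by an argument analogous to Lemma \ref{couplejumps}.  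The same argument applied at the single value $s = 0$ yields the initial estimate $D(0) \rightarrow_p 0$.

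For the remaining piece $\int_{T_n+s}^\infty X_n(s,t)\,dt + J(s)$, I would exploit that both $s \mapsto \int_0^\infty X_n(s,t)\,dt$ and $s \mapsto L(s)$ are right-continuous and coupled through $\Psi$, with jumps at the same times $s = -u_j$.  The jump sizes match asymptotically: $\int X_n\,dt$ drops by $\frac{(\log n)^2}{n}\Delta L_n$, $L$ drops by $y_j$, and the fluid-limit analysis behind Lemma \ref{couplejumps} gives $\frac{(\log n)^2}{n}\Delta L_n = y_j + o(1)$ per event (roughly $ny_j/\log n$ lineages of typical age $\sim 1/\log n$ are removed).  Combining the per-event jump-matching with $D(0) \rightarrow_p 0$ and the bulk-and-tail control, this yields $\sup_{s \in [0,T]} |D(s)| \rightarrow_p 0$.

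The principal obstacle is carrying out the jump-matching so that the accumulated error across up to $n^{1/4}$ jumps is $o_p(1)$: a per-event error of $o(1)$ is not enough, so one needs cancellations in the fluctuations of $\Delta L_n - (n/(\log n)^2) y_j$ (via a martingale/variance argument akin to Lemma \ref{martlem}) so that the summed contribution over $[0,T]$ remains negligible.
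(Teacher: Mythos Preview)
Your high-level decomposition matches the paper's: split at $t=T_n+s$, handle the $Y$-tail via (\ref{Stail}), handle $\int_0^{T_n+s}(Y_n-Y)$ via Lemma \ref{supSnS}, discretize $s$ on a grid and apply Lemma \ref{mainXYprob2} with a union bound at grid points. Those parts are correct and coincide with Lemmas \ref{Ytailint} and \ref{YnYlem} and equation (\ref{XYsi}).

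There are, however, two genuine gaps. First, you do not explain how to control the $X_n$-tail $\sup_{s}\int_{T_n+s}^\infty X_n(s,t)\,dt$; you bundle it with $J(s)$, but these do not cancel. The paper proves this tail is $o_p(1)$ separately (Lemma \ref{Xtailint}), and the argument is not automatic: it needs a moment bound on $N_n(T_n/\log n)$ (Lemma \ref{NnTn}) together with the known one-dimensional result (\ref{dimr}) applied at the random level $m=N_n(T_n/\log n)$. Once that is done, $J(s)$ must be absorbed into the \emph{bulk} term $\int_0^{T_n+s}(X_n-Y_n)$, not the tail; this is exactly equation (\ref{mainsts}).

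Second, your between-grid interpolation is where the proposal goes wrong. You invoke the $\eps_n$-threshold jump structure from $A_n$, which governs the \emph{$t$-direction} process and produces up to $n^{1/4}$ jumps over $[0,T_n]$; this is not the right object for controlling variation in $s$ over $[0,T]$. The paper instead fixes a \emph{constant} threshold $\eps^3$ in the $s$-direction, so that on $[0,T]$ there are only finitely many large $s$-jumps $(u_j,y_j)$, and with grid width $\sim(\log n)^{-2}$ each subinterval contains at most one. On intervals with no large jump the paper sandwiches $\int_0^{T_n+s}X_n(s,t)\,dt$ between its values at the endpoints $s_i,s_{i+1}$ using the elementary monotonicity bounds (\ref{X1})--(\ref{X2}) (which come simply from $L_n$ increasing at rate $n$ and only jumping down), and controls $\int Y_n$ by direct calculation (Lemma \ref{YA}); on intervals containing one large jump the same sandwich plus Lemma \ref{YB} matches the jump with $J(u_{j(i)})$. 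No martingale or variance argument in the $s$-variable is needed, and the concern about accumulating $n^{1/4}$ per-event errors disappears entirely. Your proposed route via a Lemma \ref{couplejumps}/\ref{martlem}-type argument in $s$ would require building a new compensator structure in the $s$-direction and does not obviously close; the paper's monotonicity sandwich is the missing idea.
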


To prove Proposition \ref{LWcouple}, we need to compare the integrals $\int_0^{\infty} X_n(s,t) \: dt$ and $\int_0^{\infty} Y(s,t) \: dt$.  Lemmas \ref{Ytailint} and \ref{Xtailint} below will show that it suffices to consider the integrals of $X_n$ and $Y$ up to time $T_n + s$, and Lemma \ref{YnYlem} will allow us to replace $Y$ by $Y_n$.

\begin{Lemma}\label{Ytailint}
We have $$\sup_{0 \leq s \leq T} \int_{T_n + s}^{\infty} Y(s,t) \: dt \rightarrow_p 0.$$
\end{Lemma}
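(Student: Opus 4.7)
The plan is to decompose $Y(s,t) = e^{-t} S(s,t) + \tfrac{1}{2} e^{-t} t^2$ and handle each summand separately. The $\tfrac{1}{2} e^{-t} t^2$ piece is purely deterministic, and since $s \geq 0$ its tail integral from $T_n + s$ is bounded by
$$\int_{T_n}^{\infty} \tfrac{1}{2} e^{-t} t^2 \, dt,$$
which tends to zero because $T_n = 2 \log \log n \to \infty$ and $t \mapsto e^{-t} t^2$ is integrable at infinity. This bound is uniform in $s \in [0,T]$ and gives a deterministic $o(1)$ contribution.

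For the stochastic piece, the relevant estimate is precisely (\ref{Stail}) of Lemma \ref{Slem}. In fact, inspection of the proof of Lemma \ref{Slem} shows a stronger statement: on the event $\{2M + L \leq T_n^2\}$, which holds with probability tending to one, the bound (\ref{Sstbd}) gives
$$\sup_{0 \leq s \leq T} \int_{T_n + s}^{\infty} e^{-t} |S(s,t)| \, dt \leq 2 \int_{T_n}^{\infty} e^{-t} t^2 \, dt,$$
with the right-hand side tending to zero deterministically. Working with $|S(s,t)|$ inside the integral is what lets us absorb the sign and collapse the supremum over $s$ into a uniform bound via the triangle inequality.

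Combining the two pieces, on an event of probability tending to one we have
$$\sup_{0 \leq s \leq T} \left| \int_{T_n + s}^{\infty} Y(s,t) \, dt \right| \leq 2 \int_{T_n}^{\infty} e^{-t} t^2 \, dt + \int_{T_n}^{\infty} \tfrac{1}{2} e^{-t} t^2 \, dt \longrightarrow 0,$$
which yields the desired convergence in probability. There is no real obstacle: all of the path-wise work has already been done in Lemma \ref{Slem}, and the present lemma merely packages the stochastic and deterministic contributions to $Y$ together.
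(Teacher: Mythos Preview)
Your proof is correct and follows essentially the same approach as the paper: decompose $Y(s,t)$ into the stochastic part $e^{-t}S(s,t)$ and the deterministic part $\tfrac{1}{2}e^{-t}t^2$, invoke (\ref{Stail}) from Lemma \ref{Slem} for the former, and observe that the tail integral of the latter vanishes uniformly in $s$. Your write-up is slightly more explicit than the paper's in tracking the absolute value and the uniformity in $s$, but the argument is the same.
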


\begin{proof}
We have $$\sup_{0 \leq s \leq T} \int_{T_n + s}^{\infty} e^{-t} S(s,t) \: dt \rightarrow_p 0$$ by (\ref{Stail}).  Also, it is clear that $$\lim_{n \rightarrow \infty} \int_{T_n+s}^{\infty} \frac{e^{-t} t^2}{2} \: dt = 0.$$  The result is immediate.
\end{proof}

\begin{Lemma}\label{NnTn}
For all $\eps > 0$, there exists a positive constant $C$ such that
$$\liminf_{n \rightarrow \infty} P \bigg( N_n \bigg( \frac{T_n}{\log n} \bigg) \leq \frac{Cn}{(\log n)^2} \bigg) > 1 - \eps.$$
\end{Lemma}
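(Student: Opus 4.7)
The plan is to bound $E[N_n(T_n/\log n)]$ by a constant multiple of $n/(\log n)^2$ and then apply Markov's inequality. Note that the coupling estimate of Lemma \ref{mainXYprob} controls $X_n$ only within $O((\log n)^{-1/16})$, which translates to an error on $N_n$ of order $n/(\log n)^{17/16}$ — too weak to bound $N_n(T_n/\log n)$ on the scale $n/(\log n)^2$ directly — so a separate mean computation is needed.

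The first step is to apply Dynkin's formula to $f(b) = b$. Since $N_n$ is a pure-jump Markov chain with $(Af)(b) = -\eta(b)$, the process $N_n(t) - n + \int_0^t \eta(N_n(s))\,ds$ is a martingale, so $u(t) := E[N_n(t)]$ satisfies $u'(t) = -E[\eta(N_n(t))]$. Combining the lower bound $\eta(b) \geq b\log b + (\gamma-1) b - 1$ from (\ref{etab2}) with Jensen's inequality applied to the convex function $x \mapsto x \log x$ yields $E[\eta(N_n(t))] \geq u(t) \log u(t) + (\gamma - 1) u(t) - 1$, hence
\begin{equation*}
u'(t) \leq -u(t) \log u(t) + (1 - \gamma) u(t) + 1.
\end{equation*}
Setting $v(t) = \log u(t)$ and using $u(t) \geq 1$ so that $1/u(t) \leq 1$, I obtain the linear differential inequality $v'(t) \leq -v(t) + (2 - \gamma)$, whose solution via the standard comparison principle is $v(t) \leq (2 - \gamma)(1 - e^{-t}) + (\log n) e^{-t}$.

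Evaluating at $t = T_n/\log n = 2(\log\log n)/\log n$ and Taylor-expanding $e^{-t} = 1 - 2(\log\log n)/\log n + O((\log\log n)^2/(\log n)^2)$ yields $(\log n) e^{-t} = \log n - 2 \log\log n + o(1)$ and $(2 - \gamma)(1 - e^{-t}) = o(1)$, so $v(T_n/\log n) \leq \log n - 2 \log\log n + o(1)$ for large $n$. Exponentiating gives $E[N_n(T_n/\log n)] \leq C_0 \cdot n/(\log n)^2$ for some absolute constant $C_0$ (in fact $C_0$ close to $e$ suffices), and Markov's inequality then produces $P(N_n(T_n/\log n) > C n/(\log n)^2) \leq C_0/C$, which is below $\eps$ once $C > C_0/\eps$. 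The only delicate point is justifying the differentiation of $u$ — the continuity of $s \mapsto E[\eta(N_n(s))]$ at every $s$ follows from dominated convergence and the fact that $N_n$ has at most countably many jump times — but the core insight is that Jensen's inequality forces the mean $u(t)$ to stay below the deterministic ODE $u' = -u \log u$, whose solution $u(t) = n^{e^{-t}}$ equals precisely $n/(\log n)^2$ at $t = T_n/\log n$.
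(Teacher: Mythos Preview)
Your argument is correct and takes a genuinely different route from the paper.  The paper computes $E[N_n(t)]$ exactly by invoking the Poisson--Dirichlet representation of the Bolthausen--Sznitman coalescent: with $\alpha=e^{-t}$ one has $E[N_n(t)]=\Gamma(n+\alpha)/(\alpha\Gamma(\alpha)\Gamma(n))$ from Pitman's sampling formula, and Stirling then gives $E[N_n(t)]\le C n^{\alpha}$, which at $t=T_n/\log n$ is $O(n/(\log n)^2)$; Markov's inequality finishes.  You instead stay entirely inside the paper's toolkit: the rate bound (\ref{etab2}) together with Jensen for the convex map $x\mapsto x\log x$ turns Dynkin's formula into the differential inequality $u'\le -u\log u + O(u)$, whose comparison solution is exactly $n^{e^{-t}}$ up to lower order.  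Your approach avoids quoting the external Ewens/Pitman formula and is arguably more robust (it would transfer to coalescents where no closed-form marginal is available), while the paper's approach is shorter and gives a sharper constant.  Both proofs ultimately rest on the same quantitative fact, $E[N_n(t)]\asymp n^{e^{-t}}$, and both close with Markov's inequality.
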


\begin{proof}
Let $(\Pi_n(t), t \geq 0)$ be a Bolthausen-Sznitman coalescent started with $n$ blocks.  There is a well-known method for constructing a random partition having the same distribution as $\Pi_n(t)$.  Let $\alpha = e^{-t}$.  Then let $J_1 \geq J_2 \geq \dots$ denote the points of a Poisson point process on $(0, \infty)$ whose intensity measure is given by $x^{-1-\alpha} \: dx$.  Let $S = \sum_{i=1}^{\infty} J_i$, and divide the interval $[0,1]$ into disjoint subintervals of lengths $J_1/S, J_2/S, \dots$.  Let $U_1, U_2, \dots$ be i.i.d. random variables having the uniform distribution on $[0,1]$.  Then $\Pi_n(t)$ has the same distribution as the partition of $\{1, \dots, n\}$ such that $i$ and $j$ are in the same block if and only if $U_i$ and $U_j$ land in the same subinterval of $[0,1]$; see, for example, section 4 of \cite{beleg00} or section 2.4 of \cite{pit99}.  Therefore, $N_n(t)$ has the same distribution as the number of blocks of $\Pi_n(t)$.  Thus, by (3.13) of \cite{pitcsp}, $$E[N_n(t)] = \frac{\Gamma(n + \alpha)}{\alpha \Gamma(\alpha) \Gamma(n)}.$$  Therefore, by Stirling's Formula, there exist positive constants $C_1$ and $C_2$ such that for all $n \geq 2$ and $t \geq 0$,
\begin{equation}\label{boszfdd1}
E[N_n(t)] \leq \frac{C_1 (n - 1 + \alpha)^{n + \alpha - 1/2} e^{-(n -1 + \alpha)}}{\alpha \Gamma(\alpha) (n-1)^{n - 1/2} e^{-(n-1)}} \leq \frac{C_2}{\alpha \Gamma(\alpha)} n^{\alpha}.
\end{equation}
Now, take $t = T_n/(\log n)$, so that $\alpha = e^{-T_n/(\log n)}$.  Because $$\alpha \leq 1 - \frac{T_n}{\log n} + \frac{T_n^2}{2 (\log n)^2},$$ we have
\begin{equation}\label{boszfdd2}
n^{\alpha} \leq n e^{-T_n + T_n^2/2 \log n} \leq \frac{C_3 n}{(\log n)^2}
\end{equation}
for some positive constant $C_3$.  Combining (\ref{boszfdd1}) and (\ref{boszfdd2}), we get that
$$E \bigg[ N_n \bigg( \frac{T_n}{\log n} \bigg) \bigg] \leq \frac{C_4 n}{(\log n)^2}$$ for some positive constant $C_4$.
The result now follows from Markov's Inequality.
\end{proof}

\begin{Lemma}\label{Xtailint}
We have $$\sup_{0 \leq s \leq T} \int_{T_n + s}^{\infty} X_n(s,t) \: dt \rightarrow_p 0.$$
\end{Lemma}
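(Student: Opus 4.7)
The plan is to decompose the tail integral, reduce to a nonnegative piece, and then control the supremum in $s$ via a monotonicity argument on a mesh.

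Using the definition of $X_n$,
\begin{align*}
\int_{T_n+s}^\infty X_n(s,t)\,dt &= \frac{(\log n)^2}{n}\int_{(T_n+s)/\log n}^\infty {\tilde N}_n(s/\log n, u)\,du \\
&\quad - (\log n)e^{-(T_n+s)} - (\log\log n)\int_{T_n+s}^\infty t e^{-t}\,dt.
\end{align*}
Since $T_n = 2\log\log n$ gives $e^{-T_n} = (\log n)^{-2}$, the two deterministic terms are negative and of magnitude $O(1/\log n)$ uniformly in $s \in [0,T]$. The stochastic integral is nonnegative, and since $(T_n+s)/\log n \geq T_n/\log n$,
$$\int_{T_n+s}^\infty X_n(s,t)\,dt \leq \frac{(\log n)^2}{n} I_n(s) + O(1/\log n), \qquad I_n(s) := \int_{T_n/\log n}^\infty {\tilde N}_n(s/\log n, u)\,du,$$
so the lemma reduces to showing $(\log n)^2 n^{-1} \sup_{s \in [0,T]} I_n(s) \rightarrow_p 0$.

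For each fixed $s$, I would bound $E[I_n(s)]$ using the Markov property of the Bolthausen-Sznitman coalescent: conditional on $N_n(s/\log n, T_n/\log n) = b$, the process $(N_n(s/\log n, T_n/\log n + v), v \geq 0)$ is a fresh Bolthausen-Sznitman coalescent on $b$ blocks, so $I_n(s)$ has the conditional distribution of the total branch length $L_b$ of such a coalescent. Using the identity $E[N_b(v)] = \Gamma(b+e^{-v})/(e^{-v}\Gamma(e^{-v})\Gamma(b))$ from the proof of Lemma \ref{NnTn}, a direct computation splitting the integral $\int_0^\infty (E[N_b(v)] - 1)\,dv$ at $v = \log\log b$ gives $E[L_b] = O(b/\log b)$ for large $b$. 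Combined with the bound $E[N_n(s/\log n, T_n/\log n)] \leq C_4 n/(\log n)^2$ obtained in the proof of Lemma \ref{NnTn}, one concludes $E[I_n(s)] = O(n/(\log n)^3)$, uniformly in $s$ by stationarity.

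The main obstacle is upgrading this pointwise expectation bound to a uniform-in-$s$ probability bound. My approach uses a monotonicity property of $I_n(\cdot)$: between consecutive population events, $\sigma = s/\log n$ increases without the population changing, the depth window $[\sigma - u, \sigma]$ slides forward, and $I_n(s)$ grows continuously at rate $dI_n/ds = {\tilde N}_n(s/\log n, T_n/\log n)/\log n \geq 0$ (the mass flowing past the lower endpoint $T_n/\log n$); at each population event, the ancestry of the post-event population is a subset of that of the pre-event population (new offspring inherit their parent's ancestors), so $I_n$ can only jump downward. Hence for any mesh $0 = s_0 < s_1 < \cdots < s_K = T$,
$$\sup_{s \in [0,T]} I_n(s) \leq \max_{0 \leq i \leq K} I_n(s_i) + \int_0^T \frac{{\tilde N}_n(s/\log n, T_n/\log n)}{\log n}\,ds.$$
Taking $K = \lceil (\log n)^{1/4} \rceil$ and applying Markov's inequality (via a union bound for the max, and via Fubini together with $E[{\tilde N}_n(\cdot, T_n/\log n)] = O(n/(\log n)^2)$ for the drift integral) shows both terms are bounded by $\eta n/(\log n)^{5/2}$ with probability tending to $1$ for any $\eta > 0$; multiplying by $(\log n)^2/n$ yields an $O(\eta/(\log n)^{1/2})$ bound, completing the proof.
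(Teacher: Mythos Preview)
Your proof is correct, but the paper's route is considerably shorter. After the same reduction to the nonnegative piece, the paper observes the single inequality
\[
N_n\Big(\frac{s}{\log n},\frac{t}{\log n}\Big)\;\le\; N_n\Big(0,\frac{t-s}{\log n}\Big),
\]
valid because any individual at time $(s-t)/\log n$ with a descendant at the later time $s/\log n$ certainly has a descendant at the earlier time $0$. This collapses the supremum over $s$ in one stroke:
\[
\sup_{0\le s\le T}\int_{T_n+s}^\infty \tilde N_n\Big(\frac{s}{\log n},\frac{t}{\log n}\Big)\,dt
\;\le\;\int_{T_n}^\infty \tilde N_n\Big(0,\frac{t}{\log n}\Big)\,dt,
\]
and the right side is handled by Lemma~\ref{NnTn} together with the already-available distributional result (\ref{dimr}) for $L_m$. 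No mesh, no drift integral, and no need to derive $E[L_b]=O(b/\log b)$.

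Your argument instead tracks the sample-path behaviour of $I_n(s)$ (linear growth between population events, downward jumps at events) and controls the supremum by a mesh plus the accumulated drift. This is sound, and the two intermediate estimates you need---$E[L_b]=O(b/\log b)$ via the exponential-integral asymptotics, and $E[L_B]=O(n/(\log n)^3)$ from $E[B]=O(n/(\log n)^2)$---do go through, though the second requires a small splitting argument (e.g.\ at $B=n/(\log n)^4$) rather than a direct ``combination'', since $x/\log x$ is neither convex nor concave. What your approach buys is self-containment: you avoid invoking (\ref{dimr}) as a black box. What the paper's approach buys is brevity and a cleaner structural insight about how the ancestral counts at different reference times compare.
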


\begin{proof}
It is easy to check that $$\lim_{n \rightarrow \infty} \frac{\log n}{n} \int_{T_n}^{\infty} \bigg( n e^{-t} + \frac{nte^{-t} \log \log n}{\log n} \bigg) \: dt = 0.$$  Therefore, it suffices to show that $$\sup_{0 \leq s \leq T} \frac{\log n}{n} \int_{T_n + s}^{\infty} {\tilde N}_n \bigg( \frac{s}{\log n}, \frac{t}{\log n} \bigg) \: dt \rightarrow_p 0.$$  Note that $$N_n \bigg( \frac{s}{\log n}, \frac{t}{\log n} \bigg) \leq N_n \bigg(0, \frac{t - s}{\log n} \bigg)$$ because the number of individuals in the population immediately before time $(s-t)/\log n$ who have descendants in the population at time $0$ will be at least as large as the number of individuals in the population immediately before time $(s-t)/\log n$ who have descendants in the population at the later time $s/(\log n)$.  It follows that
\begin{align}\label{Ninttail}
\sup_{0 \leq s \leq T} \frac{\log n}{n} \int_{T_n + s}^{\infty} {\tilde N}_n \bigg( \frac{s}{\log n}, \frac{t}{\log n} \bigg) \: dt &\leq \sup_{0 \leq s \leq T} \frac{\log n}{n} \int_{T_n + s}^{\infty} {\tilde N}_n \bigg( 0, \frac{t-s}{\log n} \bigg) \: dt \nonumber \\
&= \frac{\log n}{n} \int_{T_n}^{\infty} {\tilde N}_n \bigg(0, \frac{t}{\log n} \bigg) \: dt \nonumber \\
&= \frac{(\log n)^2}{n} \int_{T_n/\log n}^{\infty} {\tilde N}_n(0,t) \: dt. 
\end{align}
Conditional on $N_n(0, T_n/\log n) = m \geq 2$, the distribution of $\int_{T_n/\log n}^{\infty} {\tilde N}_n(0, t) \: dt$ is the same as the distribution of $L_m(0)$.  Let $\eps > 0$.  By Lemma \ref{NnTn}, there is a positive constant $C$ such that $P(N_n(0, T_n/\log n) \leq Cn/(\log n)^2) > 1 - \eps$ for sufficiently large $n$.  Conditional on the event that $N_n(0, T_n/\log n) \leq Cn/(\log n)^2$, the distribution of $\int_{T_n/\log n}^{\infty} {\tilde N}_n(0, t) \: dt$ is stochastically dominated by the distribution of $L_m(0)$ for $m = \lfloor Cn/(\log n)^2 \rfloor$.  Thus, by (\ref{dimr}), there is a positive constant $K$ such that $$P \bigg( \int_{T_n/\log n}^{\infty} N_n(0, t) \: dt \leq \frac{K n}{(\log n)^3} \bigg) > 1 - 2\eps$$ for sufficiently large $n$.  Hence, the right-hand side of (\ref{Ninttail}) converges in probability to zero, which gives the result.
\end{proof}

\begin{Lemma}\label{YnYlem}
We have $$\sup_{0 \leq s \leq T} \int_0^{T_n + s} |Y_n(s,t) - Y(s,t)| \: dt \rightarrow_p 0.$$
\end{Lemma}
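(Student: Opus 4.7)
The plan is to observe that the claim reduces immediately to Lemma \ref{supSnS}. Since the quadratic-drift term $e^{-t} t^2/2$ cancels in $Y_n(s,t) - Y(s,t)$, we have the exact identity
\[
Y_n(s,t) - Y(s,t) = e^{-t}\bigl(S_n(s,t) - S(s,t)\bigr),
\]
so the integrand is $e^{-t} |S_n(s,t) - S(s,t)|$.

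The next step is to pull out the supremum of $|S_n - S|$ over the time interval $[0, T_n + s]$ and bound the remaining integral by $\int_0^\infty e^{-t}\,dt = 1$. This gives, uniformly in $s \in [0,T]$,
\[
\int_0^{T_n+s} |Y_n(s,t) - Y(s,t)|\,dt \;\leq\; \sup_{0 \leq t \leq T_n+s} |S_n(s,t) - S(s,t)|.
\]
Taking the supremum over $s \in [0,T]$ on both sides produces an upper bound of $\sup_{0 \leq s \leq T} \sup_{0 \leq t \leq T_n+s} |S_n(s,t) - S(s,t)|$, which tends to zero in probability by Lemma \ref{supSnS}.

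There is no genuine obstacle here; the whole point of having set up the coupling in section \ref{constsec} so that $Y_n$ and $Y$ share the same deterministic $e^{-t} t^2/2$ piece is precisely to make this lemma a one-line consequence of the $L^2$ estimate (\ref{Esup2}) that underlies Lemma \ref{supSnS}. The only thing worth double-checking is that the exponential weight $e^{-t}$ makes the length of the integration interval irrelevant, so that enlarging $T_n + s$ to $\infty$ in the bound above costs nothing.
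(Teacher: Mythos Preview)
Your proof is correct and essentially identical to the paper's own argument: both cancel the common $e^{-t}t^2/2$ term, bound the integral by $\sup_{0\le t\le T_n+s}|S_n(s,t)-S(s,t)|$ via $\int_0^\infty e^{-t}\,dt=1$, take the supremum over $s\in[0,T]$, and invoke Lemma~\ref{supSnS}.
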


\begin{proof}
Note that
\begin{align}
\int_0^{T_n + s} |Y_n(s,t) - Y(s,t)| \: dt &= \int_0^{T_n + s} e^{-t} |S_n(s,t) - S(s,t)| \: dt \nonumber \\
&\leq \sup_{0 \leq t \leq T_n + s} |S_n(s,t) - S(s,t)|.
\end{align}
Therefore, $$\sup_{0 \leq s \leq T} \int_0^{T_n + s} |Y_n(s,t) - Y(s,t)| \: dt \leq \sup_{0 \leq s \leq T} \sup_{0 \leq t \leq T_n + s} |S_n(s,t) - S(s,t)| \rightarrow_p 0$$ by Lemma \ref{supSnS}.
\end{proof}

Choose fixed times $0 = s_0 < s_1 < \dots < s_m = T$ such that $1/(\log n)^2 \leq s_{i+1} - s_i \leq 2/(\log n)^2$ for $i = 0, 1, \dots, m-1$.  This is clearly possible for sufficiently large $n$, and $m \leq T(\log n)^2$.  
Let $\eps > 0$, and let $(-u_1, y_1), \dots, (-u_k, y_k)$ denote the points of $\Psi$ in the region $[-T, 0] \times [\eps^3, \infty)$.  Note that with probability one, there are only finitely many points of $\Psi$ in this region.  For $i = 0, 1, \dots, m-1$, let $G_i$ be the event that none of the $u_j$ falls in $[s_i, s_{i+1}]$, and let $H_i$ be the event that exactly one of the $u_j$ falls in $[s_i, s_{i+1}]$ and that this point $u_j$ is in $(s_i, s_{i+1})$.  Note that almost surely none of the $u_j$ land on one of the points $s_0, \dots, s_m$, and with probability tending to one as $n \rightarrow \infty$, no two of the points $u_j$ fall in the same interval $[s_i, s_{i+1}]$.  Consequently, we have
\begin{equation}\label{GHprob}
\lim_{n \rightarrow \infty} P \bigg( \bigcap_{i=0}^{m-1} (G_i \cup H_i) \bigg) = 1.
\end{equation}
On $H_i$, let $j(i)$ be the value of $j$ such that $u_j \in (s_i, s_{i+1})$.

\begin{Lemma}\label{YA}
We have $$\limsup_{n \rightarrow \infty} P \bigg( \sup_{0 \leq i \leq m-1} \sup_{s \in [s_i, s_{i+1}]} \bigg| \int_0^{T_n + s} Y_n(s,t) \: dt - \int_0^{T_n + s_{i+1}} Y_n(s_{i+1}, t) \: dt \bigg| {\bf 1}_{G_i} \geq 4 \eps \bigg) \leq 4 T \eps.$$
\end{Lemma}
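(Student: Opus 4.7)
The plan is to open up the integral difference using the explicit Poisson formula for $S_n$, then bound each resulting piece uniformly in $i$ and $s$. Writing $c_n = 1 - \gamma - \log\eps_n$ and applying Fubini to (\ref{Sndef}), one computes
\begin{equation*}
\int_0^{T_n+s} e^{-t} S_n(s,t)\,dt = c_n\!\int_0^{T_n+s}\!\! t e^{-t}\,dt - e^{-s}\!\!\!\!\sum_{\substack{j:\,-s<s_j\leq T_n \\ x_j>\eps_n}}\!\!\!\! x_j\bigl(e^{-s_j}-e^{-T_n}\bigr).
\end{equation*}
Subtracting this identity at $s$ and at $s_{i+1}$ and combining with the corresponding difference for the $e^{-t} t^2/2$ part of $Y_n$, the difference in the lemma equals the sum of three terms: the boundary contribution $-\int_{T_n+s}^{T_n+s_{i+1}} e^{-t}(c_n t + t^2/2)\,dt$, of Part A $:= (e^{-s_{i+1}}-e^{-s})\sum_{j:\,-s<s_j\leq T_n,\,x_j>\eps_n} x_j(e^{-s_j}-e^{-T_n})$, and of Part B $:= e^{-s_{i+1}}\sum_{j:\,-s_{i+1}<s_j\leq -s,\,x_j>\eps_n} x_j(e^{-s_j}-e^{-T_n})$.

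The boundary contribution is dominated by $(s_{i+1}-s_i)(c_n T_n + T_n^2/2)e^{-T_n} = O(\log\log n/(\log n)^{7/2})$, uniformly in $i,s$. For Part A, the inner sum may be rewritten via the displayed identity as $e^s\bigl[c_n\int_0^{T_n+s} t e^{-t}\,dt - \int_0^{T_n+s} e^{-t} S_n(s,t)\,dt\bigr]$. On the event $B_n$ of Lemma \ref{Bnlem} one has $|S_n(s,t)|\leq 6(\log\log n)^2$ on $[0,T]\times[0,T_n+s]$ (combining $B_{2,n}$ and $B_{3,n}$), so this sum is at most $e^T(c_n + 6(\log\log n)^2) = O(\sqrt{\log n})$. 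Combined with $|e^{-s_{i+1}}-e^{-s}|\leq s_{i+1}-s_i = O((\log n)^{-2})$, this yields $|\textrm{Part A}| = O((\log n)^{-3/2})$, uniformly in $i, s$ on $B_n$, hence eventually below $\eps$.

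The crux is Part B. On $G_i$, every point $(s_k,x_k)$ of $\Psi$ with $s_k\in(-s_{i+1},-s_i]$ satisfies $x_k<\eps^3$; moreover $s_{i+1}+s_k\in(0,s_{i+1}-s_i]$, so $e^{-s_{i+1}}(e^{-s_k}-e^{-T_n})\leq e^{-(s_{i+1}+s_k)}\leq 1$. Hence, uniformly in $s\in[s_i,s_{i+1}]$,
\begin{equation*}
|\textrm{Part B}|\,{\bf 1}_{G_i}\;\leq\;V_i\;:=\!\!\!\!\sum_{\substack{j:\,s_j\in(-s_{i+1},-s_i] \\ \eps_n<x_j<\eps^3}}\!\!\!\! x_j.
\end{equation*}
Campbell's formula gives $E[V_i]=(s_{i+1}-s_i)(\log\eps^3-\log\eps_n)=O((\log n)^{-3/2})$ and $\mathrm{Var}(V_i)\leq(s_{i+1}-s_i)\eps^3$, so once $n$ is large enough that $E[V_i]\leq\eps/2$ Chebyshev yields $P(V_i\geq\eps)\leq 4(s_{i+1}-s_i)\eps$. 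A union bound then gives $P(\max_i V_i\geq\eps)\leq 4\eps\sum_i(s_{i+1}-s_i)=4T\eps$. On $B_n\cap\{\max_i V_i<\eps\}$ every piece is below $\eps$, the supremum in the statement is below $3\eps<4\eps$, and since $P(B_n)\to 1$ this yields $\limsup_n P(\cdots\geq 4\eps)\leq 4T\eps$. The main obstacle is that the union bound runs over $m=O((\log n)^2)$ intervals; it survives only because the $\Theta((\log n)^{-2})$ interval lengths make $\sum_i\mathrm{Var}(V_i)=O(T\eps^3)$, exactly the scaling needed for Chebyshev to beat the union-bound cost.
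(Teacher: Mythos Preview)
Your proof is correct and follows essentially the same strategy as the paper's. Both arguments isolate the contribution of Poisson points landing in the short interval $(-s_{i+1},-s_i]$ as the crux, bound it on $G_i$ via a Chebyshev estimate whose variance scales like $(s_{i+1}-s_i)\eps^3$, and then take a union bound over the $O((\log n)^2)$ intervals so that the total cost is $4T\eps$. The only differences are cosmetic: the paper reaches the decomposition via the shift identity $S_n(s,t)=S_n(s_{i+1},t+s_{i+1}-s)-S_n(s_{i+1},s_{i+1}-s)$ and then invokes Lemma~\ref{Sntheta} for the key estimate, whereas you expand $S_n$ directly from its Poisson definition and redo the Campbell/Chebyshev computation by hand; and for the analogue of Part~A the paper routes through $L(s)$, $J(s)$ and Lemmas~\ref{Ytailint}, \ref{YnYlem}, while your appeal to $B_n$ is slightly more direct.
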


\begin{proof}
If $s_i \leq s \leq s_{i+1}$ and $t \geq 0$, then $$S_n(s,t) = S_n(s_{i+1}, t + s_{i+1} - s) - S_n(s_{i+1}, s_{i+1} - s).$$  Therefore,
\begin{align}\label{Yssi}
\int_0^{T_n + s} Y_n(s,t) \: dt &= \int_0^{T_n + s} e^{-t} S_n(s_{i+1}, t + s_{i+1} - s) \: dt \nonumber \\
&\hspace{.5in} - \int_0^{T_n + s} e^{-t} S_n(s_{i+1}, s_{i+1} - s) \: dt + \int_0^{T_n + s} \frac{e^{-t} t^2}{2} \: dt \nonumber \\
&= e^{s_{i+1} - s} \int_0^{T_n + s_{i+1}} e^{-t} S_n(s_{i+1}, t) \: dt - e^{s_{i+1} - s} \int_0^{s_{i+1} - s} e^{-t} S_n(s_{i+1}, t) \: dt \nonumber \\
&\hspace{.5in} - \int_0^{T_n + s} e^{-t} S_n(s_{i+1}, s_{i+1} - s) \: dt + \int_0^{T_n + s} \frac{e^{-t} t^2}{2} \: dt \nonumber \\
&= \int_0^{T_n + s_{i+1}} Y_n(s_{i+1}, t) \: dt + (e^{s_{i+1} - s} - 1) \int_0^{T_n + s_{i+1}} e^{-t} S_n(s_{i+1}, t) \: dt \nonumber \\
&\hspace{.5in} - e^{s_{i+1} - s} \int_0^{s_{i+1} - s} e^{-t} S_n(s_{i+1}, t) \: dt - \int_0^{T_n + s} e^{-t} S_n(s_{i+1}, s_{i+1} - s) \: dt \nonumber \\
&\hspace{.5in} - \int_{T_n + s}^{T_n + s_{i+1}} \frac{e^{-t} t^2}{2} \: dt. 
\end{align}
We must bound the last four terms on the right-hand side of (\ref{Yssi}).
Now $e^{s_{i+1} - s_i} - 1 \leq 3/(\log n)^2$ for sufficiently large $n$, so 
\begin{align}\label{ET1}
\bigg| (e^{s_{i+1} - s} - 1) &\int_0^{T_n + s_{i+1}} e^{-t} S_n(s_{i+1}, t) \: dt \bigg| \nonumber \\
&\leq \frac{3}{(\log n)^2} \bigg(1 + \bigg| \int_0^{T_n + s_{i+1}} Y_n(s_{i+1}, t) \: dt \bigg| \bigg) \nonumber \\
&\leq \frac{3}{(\log n)^2} \bigg(1 + \int_0^{T_n + s_{i+1}} |Y_n(s_{i+1},t) - Y(s_{i+1},t)| \: dt \nonumber \\
&\hspace{.5in} + \bigg|\int_{T_n + s_{i+1}}^{\infty} Y(s_{i+1},t) \: dt \bigg| + \bigg| \int_0^{\infty} Y(s_{i+1}, t) \: dt \bigg| \bigg) \nonumber \\
&\leq \frac{3}{(\log n)^2} \bigg(1 + \sup_{0 \leq s \leq T} \int_0^{T_n + s} |Y_n(s,t) - Y(s,t)| \: dt \nonumber \\
&\hspace{.5in}+ \sup_{0 \leq s \leq T} \bigg|\int_{T_n + s}^{\infty} Y(s,t) \: dt \bigg| +  \sup_{0 \leq s \leq T} |L(s)| + \sup_{0 \leq s \leq T} |J(s)| \bigg).
\end{align}
Also, $e^{s_{i+1} - s} \leq 2$ for sufficiently large $n$, which means
\begin{align}\label{ET2}
\bigg| e^{s_{i+1} - s} \int_0^{s_{i+1} - s} e^{-t} S_n(s_{i+1}, t) \: dt \bigg| &\leq 2 (s_{i+1} - s_i) \sup_{0 \leq t \leq s_{i+1} - s_i} |S_n(s_{i+1}, t)| \nonumber \\
&\leq \frac{4}{(\log n)^2} \sup_{0 \leq t \leq s_{i+1} - s_i} |S_n(s_{i+1}, t)|.
\end{align}
Likewise,
\begin{equation}\label{ET3}
\bigg| \int_0^{T_n + s} e^{-t} S_n(s_{i+1}, s_{i+1} - s) \: dt \bigg| \leq \sup_{0 \leq t \leq s_{i+1} - s_i} |S_n(s_{i+1}, t)|.
\end{equation}
Finally,
\begin{equation}\label{ET4}
\bigg|\int_{T_n + s}^{T_n + s_{i+1}} \frac{e^{-t} t^2}{2} \: dt \bigg| \leq \frac{1}{(\log n)^2}
\end{equation}
for sufficiently large $n$.    Combining (\ref{ET1}), (\ref{ET2}), (\ref{ET3}), and (\ref{ET4}) with (\ref{Yssi}), we get that for sufficiently large $n$,
\begin{align}\label{supsup}
&\sup_{0 \leq i \leq m-1} \sup_{s \in [s_i, s_{i+1}]} \bigg| \int_0^{T_n + s} Y_n(s,t) \: dt - \int_0^{T_n + s_{i+1}} Y_n(s_{i+1}, t) \: dt \bigg| \nonumber \\
&\hspace{.1in} \leq \frac{4}{(\log n)^2} + 2 \sup_{0 \leq i \leq m-1} \sup_{0 \leq t \leq s_{i+1} - s_i} |S_n(s_{i+1}, t)| + \frac{3}{(\log n)^2} \bigg( \sup_{0 \leq s \leq T} \int_0^{T_n + s} |Y_n(s,t) - Y(s,t)| \: dt \nonumber \\
&\hspace{1.5in}+  \sup_{0 \leq s \leq T} \bigg|\int_{T_n + s}^{\infty} Y(s,t) \: dt \bigg| + \sup_{0 \leq s \leq T} |L(s)| + \sup_{0 \leq s \leq T} |J(s)| \bigg).
\end{align}
Note that $\sup_{0 \leq s \leq T} |J(s)|$ is almost surely finite by properties of the Poisson process $\Psi$, and $\sup_{0 \leq s \leq T} |L(s)|$ is almost surely finite by Proposition \ref{LOUprop}.  Combining these observations with Lemmas \ref{Ytailint} and \ref{YnYlem}, we see that the probability that the third term on the right-hand side of (\ref{supsup}) is less than $\eps$ tends to zero as $n \rightarrow \infty$.  Clearly $4/(\log n)^2 < \eps$ for sufficiently large $n$.  To control the second term, apply Lemma \ref{Sntheta} with $\delta_n = s_{i+1} - s_i$ and $\theta = \eps^3$ to get $$P \bigg( \sup_{0 \leq t \leq s_{i+1} - s_i} |S_n(s_{i+1}, t)| {\bf 1}_{G_i} > \eps \bigg) \leq P \bigg( \sup_{0 \leq t \leq s_{i+1} - s_i} |S_n(s_{i+1}, t)| > \eps \bigg| G_i \bigg) \leq 4 (s_{i+1} - s_i) \eps,$$ and thus $$P \bigg( 2 \sup_{0 \leq i \leq m-1} \sup_{0 \leq t \leq s_{i+1} - s_i} |S_n(s_{i+1}, t)| {\bf 1}_{G_i} > 2 \eps \bigg) \leq 4 \eps \sum_{i=0}^{m-1} (s_{i+1} - s_i) = 4 T \eps.$$  Combining these bounds with (\ref{supsup}) gives the result.
\end{proof}

\begin{Lemma}\label{YB}
We have
\begin{align}
\lim_{n \rightarrow \infty} P \bigg( \sup_{0 \leq i \leq m-1} \sup_{s \in [s_i, s_{i+1}]} \bigg|  \int_0^{T_n + s} &Y_n(s,t) \: dt - J(u_{j(i)}) {\bf 1}_{\{s \leq u_{j(i)}\}}  \nonumber \\
&- \int_0^{T_n + s_{i+1}} Y_n(s_{i+1}, t) \: dt \bigg| {\bf 1}_{H_i} \geq 5 \eps \bigg) = 0. \nonumber
\end{align}
\end{Lemma}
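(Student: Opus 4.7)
The plan is to adapt the expansion (\ref{Yssi}) that underlies Lemma \ref{YA}. Writing the difference $\int_0^{T_n+s} Y_n(s,t)\,dt - \int_0^{T_n+s_{i+1}} Y_n(s_{i+1},t)\,dt$ as the sum of the same four terms as there, the first and the fourth terms are bounded uniformly in $i$ and $s$ by expressions that are $o_p(1)$ exactly as in the proof of Lemma \ref{YA}, since their analysis did not use the absence of large jumps in $[s_i,s_{i+1}]$. The two middle terms $-e^{s_{i+1}-s}\int_0^{s_{i+1}-s} e^{-t} S_n(s_{i+1},t)\,dt$ and $-(1-e^{-(T_n+s)})S_n(s_{i+1},s_{i+1}-s)$ are where the single large jump makes itself felt and where the main contribution $J(u_{j(i)}){\bf 1}_{\{s \leq u_{j(i)}\}} = y_{j(i)}{\bf 1}_{\{s \leq u_{j(i)}\}}$ must emerge.

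On the event $H_i$, decompose the restriction of $S_n(s_{i+1},\cdot)$ to $[0,s_{i+1}-s_i]$ as $S_n(s_{i+1},t) = S^{\ast}(t) - y_{j(i)}{\bf 1}_{\{t \geq \tau\}}$, where $\tau = s_{i+1}-u_{j(i)}$ and $S^{\ast}$ is built from the restriction of $\Psi$ to $[-s_{i+1},-s_i]\times(\eps_n,\eps^3)$. Conditional on $H_i$ together with the location and size of the single large jump, $S^{\ast}$ has the same law as $S_n(s_{i+1},\cdot)$ conditioned on the event $A(\eps^3)$ of Lemma \ref{Sntheta}. Applying that lemma with $\theta = \eps^3$ and $\delta_n = s_{i+1}-s_i = O((\log n)^{-2})$ gives the conditional bound $P(\sup_{0 \leq t \leq s_{i+1}-s_i}|S^{\ast}(t)| \geq \eps \mid H_i) = O(\eps(\log n)^{-2})$. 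Crucially, the total number $k$ of points of $\Psi$ in $[-T,0]\times[\eps^3,\infty)$ is almost surely finite, so $H_i$ occurs for at most $k$ indices $i$; a union bound over these $k$ indices only, rather than over all $m \sim T(\log n)^2$ intervals, yields $\sup_i\sup_t |S^{\ast}(t)|{\bf 1}_{H_i} \rightarrow_p 0$.

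To control the magnitude of the big jumps, let $Y_{\max} = \max\{y_1,\dots,y_k\}$, which is almost surely finite, and choose a sequence $M_n \rightarrow \infty$ slowly enough that $M_n/(\log n)^2 \rightarrow 0$; then $P(Y_{\max} > M_n) \rightarrow 0$. On the intersection of $\{Y_{\max} \leq M_n\}$ with the good event $\{\sup_i\sup_t |S^{\ast}|{\bf 1}_{H_i} < \eps\}$, substituting the decomposition of $S_n(s_{i+1},\cdot)$ into the two middle terms produces the main contribution $y_{j(i)}{\bf 1}_{\{s \leq u_{j(i)}\}}$ from the term $-(1-e^{-(T_n+s)})S_n(s_{i+1},s_{i+1}-s)$, together with errors of size $O(\eps) + O(M_n e^{-T_n})$, while the integral term splits into an $S^{\ast}$ piece of size $O(\eps(\log n)^{-2})$ and a big-jump piece of size $O(M_n(\log n)^{-2})$. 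Summing all contributions, the total discrepancy is bounded by $\eps + o(1)$ uniformly in $i$ and $s$, which falls below $5\eps$ for large $n$. The principal obstacle is the heavy tail of the jump sizes, since $(\log n)^{-2}$-order error terms get multiplied by the unbounded $y_{j(i)}$; truncating at a slowly growing $M_n$ converts this into a vanishing-probability bad event, and restricting the union bound from all $m$ intervals to the finitely many $H_i$-intervals prevents the $O(\eps\delta_n)$ estimate from Lemma \ref{Sntheta} from being swamped.
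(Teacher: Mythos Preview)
Your proposal is correct and follows essentially the same route as the paper. Both arguments start from the expansion (\ref{Yssi}), reuse the bounds (\ref{ET1}) and (\ref{ET4}) for the first and fourth terms, and for the two middle terms remove the single large jump $J(u_{j(i)}){\bf 1}_{\{t\geq s_{i+1}-u_{j(i)}\}}$ from $S_n(s_{i+1},\cdot)$ and apply Lemma \ref{Sntheta} to the residual process; the jump sizes are controlled via the almost sure finiteness of $\sup_{0\leq s\leq T}|J(s)|$ (your $Y_{\max}$). Two minor remarks: your truncation at a slowly growing $M_n$ is unnecessary, since $Y_{\max}/(\log n)^2\to 0$ almost surely already does the job; and your ``union bound over the $k$ indices where $H_i$ holds'' should be read as the ordinary union bound over all $i$ together with $\sum_i P(H_i)\leq E[k]<\infty$, which is exactly what the paper does via (\ref{PBi}).
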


\begin{proof}
Note that (\ref{Yssi}) still holds in this setting.  Consequently, if $s_i \leq s \leq s_{i+1}$, then on the event $H_i$, we have
\begin{align}\label{qw1}
\bigg| \int_0^{T_n + s} &Y_n(s,t) \: dt - J(u_{j(i)}) {\bf 1}_{\{s \leq u_{j(i)}\}} - \int_0^{T_n + s_{i+1}} Y_n(s_{i+1}, t) \: dt \bigg| \nonumber \\
&\leq \bigg| (e^{s_{i+1} - s} - 1) \int_0^{T_n + s_{i+1}} e^{-t} S_n(s_{i+1}, t) \: dt \bigg| + \bigg| e^{s_{i+1} - s} \int_0^{s_{i+1} - s} e^{-t} S_n(s_{i+1}, t) \: dt \bigg| \nonumber \\
&\hspace{.2in}+ \bigg| \int_0^{T_n + s} e^{-t} S_n(s_{i+1}, s_{i+1} - s) \: dt + J(u_{j(i)}) {\bf 1}_{\{s \leq u_{j(i)}\}} \bigg| + \bigg| \int_{T_n+s}^{T_n + s_{i+1}} \frac{e^{-t} t^2}{2} \: dt \bigg|.
\end{align}
The bounds (\ref{ET1}), (\ref{ET2}), and (\ref{ET4}) also hold.  In place of (\ref{ET3}), observe that if $s_i \leq s \leq s_{i+1}$, then on the event $H_i$, 
\begin{align}\label{qw2}
\bigg| &\int_0^{T_n + s} e^{-t} S_n(s_{i+1}, s_{i+1} - s) \: dt + J(u_{j(i)}){\bf 1}_{\{s \leq u_{j(i)}\}} \bigg| \nonumber \\
&\hspace{.2in}= \bigg| S_n(s_{i+1}, s_{i+1} - s) \bigg(1 - \int_{T_n+s}^{\infty} e^{-t} \: dt \bigg) + J(u_{j(i)}) {\bf 1}_{\{s \leq u_{j(i)}\}} \bigg| \nonumber \\
&\hspace{.2in}\leq \big| S_n(s_{i+1}, s_{i+1} - s) + J(u_{j(i)}){\bf 1}_{\{s \leq u_{j(i)}\}} \big| + \big| S_n(s_{i+1}, s_{i+1} - s) e^{-(T_n + s)} \big| \nonumber \\
&\hspace{.2in}\leq \sup_{0 \leq t \leq s_{i+1} - s_i} \big|S_n(s_{i+1}, t) + J(u_{j(i)}) {\bf 1}_{\{t \geq s_{i+1} - u_{j(i)}\}} \big| + \frac{1}{(\log n)^2} \sup_{0 \leq t \leq s_{i+1} - s_i} |S_n(s_{i+1}, t)|. 
\end{align}
Also, on the event $H_i$,
\begin{equation}\label{qw3}
\sup_{0 \leq t \leq s_{i+1} - s_i} |S_n(s_{i+1}, t)| \leq |J(u_{j(i)})| + \sup_{0 \leq t \leq s_{i+1} - s_i} \big|S_n(s_{i+1}, t) - J(u_{j(i)}) {\bf 1}_{\{t \geq s_{i+1} - u_{j(i)}\}} \big|.
\end{equation}
Thus, on the event $H_i$, by (\ref{qw1}), (\ref{qw2}), (\ref{qw3}), (\ref{ET1}), (\ref{ET2}), and (\ref{ET4}), we have
\begin{align}\label{supsup2}
&\sup_{0 \leq i \leq m-1} \sup_{s \in [s_i, s_{i+1}]} \bigg|  \int_0^{T_n + s} Y_n(s,t) \: dt - J(u_{j(i)}) {\bf 1}_{\{s \leq u_{j(i)}\}} - \int_0^{T_n + s_{i+1}} Y_n(s_{i+1}, t) \: dt \bigg| \nonumber \\
&\hspace{.3in} \leq \frac{4}{(\log n)^2} + \frac{5}{(\log n)^2} \sup_{0 \leq s \leq T} |J(s)| \nonumber \\
&\hspace{.5in} + \bigg(1 + \frac{5}{(\log n)^2} \bigg) \sup_{0 \leq i \leq m-1} \sup_{0 \leq t \leq s_{i+1} - s_i} \big|S_n(s_{i+1}, t) + J(u_{j(i)}) {\bf 1}_{\{t \geq s_{i+1} - u_{j(i)}\}} \big| \nonumber \\
&\hspace{.5in} + \frac{3}{(\log n)^2} \bigg( \sup_{0 \leq s \leq T} \int_0^{T_n + s} |Y_n(s,t) - Y(s,t)| \: dt \nonumber \\
&\hspace{1in}+  \sup_{0 \leq s \leq T} \bigg|\int_{T_n + s}^{\infty} Y(s,t) \: dt \bigg| + \sup_{0 \leq s \leq T} |L(s)| +  \sup_{0 \leq s \leq T} |J(s)| \bigg). 
\end{align}
By Lemmas \ref{Ytailint} and \ref{YnYlem}, and the fact that $\sup_{0 \leq s \leq T} |L(s)|$ and $\sup_{0 \leq s \leq T} |J(s)|$ are finite almost surely, the probability that the first, second, or fourth term on the right-hand side of (\ref{supsup2}) is greater than $\eps$ tends to zero as $n \rightarrow \infty$.  To bound the third term, we apply Lemma \ref{Sntheta}.  Observe that the conditional distribution of $$\sup_{0 \leq t \leq s_{i+1} - s_i} \big|S_n(s_{i+1}, t) + J(u_{j(i)}) {\bf 1}_{\{t \geq s_{i+1} - u_{j(i)}\}}\big|$$ given $H_i$ is the same as the conditional distribution of $$\sup_{0 \leq t \leq \delta_n} |S_n(s,t)|$$ given $A(\theta)$ in Lemma \ref{Sntheta} if we take $\delta_n = s_{i+1} - s_i$ and $\theta = \eps^3$.  Because
\begin{equation}\label{PBi}
P(H_i) \leq (s_{i+1} - s_i) \int_{\eps^3}^{\infty} x^{-2} \: dx \leq \frac{2}{\eps^3 (\log n)^2},
\end{equation}
it follows from Lemma \ref{Sntheta} that
\begin{align}
P \bigg( 2 \sup_{0 \leq i \leq m-1} \sup_{0 \leq t \leq s_{i+1} - s_i} \big|S_n(s_{i+1}, t&) + J(u_{j(i)}) {\bf 1}_{\{t \geq s_{i+1} - u_{j(i)}\}} \big| {\bf 1}_{H_i} > 2 \eps \bigg) \nonumber \\
&\leq \sum_{i=0}^{m-1} P(H_i) \cdot 4 (s_{i+1} - s_i) \eps \leq \frac{8T}{\eps^2 (\log n)^2}, \nonumber
\end{align}
which tends to zero as $n \rightarrow \infty$.  The result follows.
\end{proof}

\begin{Lemma}
If $s \geq 0$ and $h \geq 0$ with $0 \leq s+h \leq T$, then 
\begin{equation}\label{X1}
\int_0^{T_n+s+h} X_n(s+h, t) \: dt \leq \int_0^{T_n+s} X_n(s,t) \: dt + h \log n
\end{equation}
and for sufficiently large $n$,
\begin{align}\label{X2}
\int_0^{T_n+s+h} &X_n(s+h, t) \: dt \nonumber \\
&\geq \int_0^{T_n+s} X_n(s,t) \: dt - \frac{2h}{\log n} - \frac{(\log n)(T_n + s)}{n} \bigg(n - N_n \bigg( \frac{s+h}{\log n}, \frac{h}{\log n} \bigg) + 1 \bigg).
\end{align}
\end{Lemma}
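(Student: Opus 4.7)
The plan is to split the difference
\[
\Delta := \int_0^{T_n+s+h} X_n(s+h,t) \, dt - \int_0^{T_n+s} X_n(s,t) \, dt
\]
as $\Delta = \Delta_N - \Delta_d$, where $\Delta_N$ collects the ancestor contributions to $X_n$ and $\Delta_d$ collects the deterministic centering terms over the extra slice of integration $[T_n+s,\,T_n+s+h]$:
\[
\Delta_N = \frac{\log n}{n}\int_0^{T_n+s+h}\!\! \tilde{N}_n\Bigl(\tfrac{s+h}{\log n},\tfrac{t}{\log n}\Bigr) dt \;-\; \frac{\log n}{n}\int_0^{T_n+s}\!\! \tilde{N}_n\Bigl(\tfrac{s}{\log n},\tfrac{t}{\log n}\Bigr) dt,
\]
\[
\Delta_d = \int_{T_n+s}^{T_n+s+h}\!\! \Bigl((\log n)e^{-t} + (\log \log n)\, t e^{-t}\Bigr) dt.
\]
Because $T_n = 2 \log \log n$ gives $e^{-(T_n+s)} \leq 1/(\log n)^2$, the integrand defining $\Delta_d$ is bounded on $[T_n+s, T_n+s+h]$ by $1/\log n + o(1/\log n)$, so that $0 \leq \Delta_d \leq 2h/\log n$ for all sufficiently large $n$. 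This term immediately produces the $-2h/\log n$ contribution in (\ref{X2}) and is harmless in (\ref{X1}).

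To handle $\Delta_N$, substitute $u = t - h$ on the range $[h, T_n+s+h]$ in the first integral to rewrite
\[
\Delta_N = \frac{\log n}{n}\int_0^h \!\tilde{N}_n\Bigl(\tfrac{s+h}{\log n},\tfrac{t}{\log n}\Bigr) dt \;+\; \frac{\log n}{n}\int_0^{T_n+s}\!\!\Bigl[\tilde{N}_n\Bigl(\tfrac{s+h}{\log n},\tfrac{u+h}{\log n}\Bigr) - \tilde{N}_n\Bigl(\tfrac{s}{\log n},\tfrac{u}{\log n}\Bigr)\Bigr] du.
\]
The key observation is that both $\tilde{N}_n$ terms in the bracketed integrand count ancestors at the \emph{same} earlier (unscaled) time $(s-u)/\log n$, but taken from the populations alive at $(s+h)/\log n$ and at $s/\log n$ respectively. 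Since every individual alive at $(s+h)/\log n$ is a descendant of a unique individual alive at $s/\log n$, the first ancestor set is a subset of the second, giving the pointwise monotonicity $\tilde{N}_n((s+h)/\log n,(u+h)/\log n) \leq \tilde{N}_n(s/\log n, u/\log n)$. For the upper bound (\ref{X1}), the bracketed integral is then nonpositive, while the first integral in $\Delta_N$ is bounded by $h \log n$ using $\tilde{N}_n \leq n$; together with $\Delta_d \geq 0$ this yields $\Delta \leq h \log n$.

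For the lower bound (\ref{X2}) one needs to quantify how much $\tilde{N}_n$ can shrink. Each ``lost'' ancestor in the bracketed integral corresponds to an individual alive at time $s/\log n$ that has no descendant alive at $(s+h)/\log n$, and the number of such individuals is exactly $n - N_n((s+h)/\log n, h/\log n)$ — almost surely so, since with probability one no population change occurs at the fixed time $s/\log n$. This gives the pointwise bound
\[
\tilde{N}_n\Bigl(\tfrac{s+h}{\log n},\tfrac{u+h}{\log n}\Bigr) - \tilde{N}_n\Bigl(\tfrac{s}{\log n},\tfrac{u}{\log n}\Bigr) \geq -\Bigl(n - N_n\Bigl(\tfrac{s+h}{\log n},\tfrac{h}{\log n}\Bigr) + 1\Bigr),
\]
in which the additive $+1$ absorbs the transition between $\tilde{N}_n = N_n$ and $\tilde{N}_n = 0$ at the MRCA. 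Integrating over $u \in [0, T_n+s]$, dropping the nonnegative first integral in $\Delta_N$, and subtracting the bound $\Delta_d \leq 2h/\log n$ then yields (\ref{X2}).

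The main subtlety is keeping careful track of the distinction between $N_n$ and the truncated $\tilde{N}_n = N_n {\bf 1}_{\{N_n > 1\}}$: the ancestor-subset monotonicity is clean for $N_n$, but the discontinuous truncation at the MRCA forces a short case split (according to whether each of the two $\tilde{N}_n$ values has dropped to zero) and accounts for the $+1$ in the stated bound.
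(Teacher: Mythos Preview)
Your proof is correct and follows essentially the same route as the paper's: both arguments expand $X_n$ into its $\tilde N_n$ and deterministic parts, shift the variable by $h$ to align the ancestor times, use the monotonicity $N_n(s+h,t+h)\le N_n(s,t)$ (together with the bound $\tilde N_n\le n$ on $[0,h]$) for the upper bound, and use the estimate $N_n(s,t)-N_n(s+h,t+h)\le n-N_n(s+h,h)$ with an extra $+1$ for the $\tilde N_n$ truncation to get the lower bound. Your handling of $\Delta_d$ via $e^{-(T_n+s)}\le(\log n)^{-2}$ is exactly the paper's final computation.
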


\begin{proof}
We have
\begin{align}\label{Xnsh}
\int_0^{T_n + s + h} X_n(s+h, t) \: dt &= \frac{\log n}{n} \int_0^{T_n + s + h} {\tilde N}_n \bigg( \frac{s+h}{\log n}, \frac{t}{\log n} \bigg) - n e^{-t} - \frac{nt e^{-t} \log \log n}{\log n} \: dt \nonumber \\
&= \frac{\log n}{n} \int_0^{T_n + s} {\tilde N}_n \bigg( \frac{s}{\log n}, \frac{t}{\log n} \bigg) - n e^{-t} - \frac{nt e^{-t} \log \log n}{\log n} \: dt \nonumber \\
&\hspace{.5in}- \frac{\log n}{n} \int_{T_n+s}^{T_n+s+h} \bigg( ne^{-t} + \frac{nte^{-t} \log \log n}{\log n} \bigg) \: dt \nonumber \\
&\hspace{.5in}+ \frac{\log n}{n} \int_0^{T_n+s+h} {\tilde N}_n \bigg( \frac{s+h}{\log n}, \frac{t}{\log n} \bigg) \: dt \nonumber \\
&\hspace{.5in}- \frac{\log n}{n} \int_0^{T_n+s} {\tilde N}_n \bigg( \frac{s}{\log n}, \frac{t}{\log n} \bigg) \: dt \nonumber \\
&= \int_0^{T_n+s} X_n(s,t) \: dt - \frac{\log n}{n} \int_{T_n+s}^{T_n+s+h} \bigg( ne^{-t} + \frac{nte^{-t} \log \log n}{\log n} \bigg) \: dt \nonumber \\
&\hspace{.5in}+ \frac{\log n}{n} \int_0^h {\tilde N}_n \bigg( \frac{s+h}{\log n}, \frac{t}{\log n} \bigg) \: dt \nonumber \\
&\hspace{.5in}+ \frac{\log n}{n} \int_0^{T_n+s} {\tilde N}_n \bigg( \frac{s+h}{\log n}, \frac{t+h}{\log n} \bigg) - {\tilde N}_n \bigg( \frac{s}{\log n}, \frac{t}{\log n} \bigg) \: dt.
\end{align}
Because $N_n(s,t) \leq n$ for all $s,t \geq 0$, the integral in the third term on the right-hand side of (\ref{Xnsh}) is bounded by $hn$, so the term is bounded by $h \log n$.  Also, for all $s,t,h \geq 0$, we have $N(s+h, t+h) \leq N(s,t)$ because the number of individuals in the population immediately before time $(s+h)-(t+h) = s-t$ with descendants alive in the population at time $s+h$ is less than or equal to the number of individuals in the population immediately before time $s-t$ with descendants alive in the population at time $s$.  Therefore, the fourth term in the right-hand side of (\ref{Xnsh}) is nonpositive.  Equation (\ref{X1}) follows.

Next, we claim that for all $s,t,h \geq 0$, we have $N_n(s,t) - N_n(s+h, t+h) \leq n - N_n(s+h, h)$.  To see this, note that $N_n(s,t) - N_n(s+h, t+h)$ is the number of individuals in the population immediately before time $s-t$ with descendants alive in the population at time $s$ but not at time $s+h$.  This is at most the number of individuals in the population at time $s$ that do not have descendants alive in the population at time $s+h$, which is at most $n - N_n(s+h, h)$.  Therefore, ${\tilde N}_n(s,t) - {\tilde N}_n(s+h, t+h) \leq n - N_n(s+h,h) + 1$.  The result (\ref{X2}) follows from this observation and (\ref{Xnsh}) because
$$\frac{\log n}{n} \int_{T_n+s}^{T_n+s+h} \bigg( ne^{-t} + \frac{nte^{-t} \log \log n}{\log n} \bigg) \: dt \leq (h \log n) \sup_{t \geq T_n} \bigg( e^{-t} + \frac{te^{-t} \log \log n}{\log n} \bigg) \leq \frac{2h}{\log n}$$ for sufficiently large $n$.
\end{proof}

\begin{proof}[Proof of Proposition \ref{LWcouple}]
By Lemmas \ref{Ytailint}, \ref{Xtailint}, and \ref{YnYlem}, it suffices to show that
\begin{equation}\label{mainsts}
\sup_{0 \leq s \leq T} \bigg| \int_0^{T_n + s} X_n(s,t) \: dt + J(s) - \int_0^{T_n + s} Y_n(s,t) \: dt \bigg| \rightarrow_p 0.
\end{equation}
By Lemmas \ref{Anlem}, \ref{Bnlem}, and \ref{mainXYprob2}, we have
\begin{align}
\limsup_{n \rightarrow \infty} P \bigg( &\int_0^{T_n + s_i} |X_n(s_i, t) - Y_n(s_i, t)| \: dt \geq \frac{5(T_n + s_i)}{(\log n)^{1/16}} \mbox{ for some }i = 0, 1, \dots, m \bigg) \nonumber \\
&\leq \limsup_{n \rightarrow \infty} \bigg(P(A_n^c) + P(B_n^c) \nonumber \\
&\hspace{.3in}+ (1 + T(\log n)^2) \bigg(\frac{(T_n + T)(\log n)^{3/2}}{n^{1/2} \eps_n} + 4(\log n)^2 (T_n + T) \eps_n \bigg) \bigg) = 0. \nonumber
\end{align}
Therefore, for all $\eps > 0$, we have
\begin{equation}\label{XYsi}
\lim_{n \rightarrow \infty} P \bigg( \sup_{0 \leq i \leq m} \bigg| \int_0^{T_n + s_i} X_n(s_i, t) \: dt - \int_0^{T_n + s_i} Y_n(s_i, t) \: dt \bigg| \geq \eps \bigg) = 0.
\end{equation}

We first consider the case in which $G_i$ occurs.  If $s \in [s_i, s_{i+1}]$, then by (\ref{X1}),
\begin{equation}\label{895}
\int_0^{T_n+s} X_n(s,t) \leq \int_0^{T_n+s_i} X_n(s_i, t) \: dt + (s - s_i) \log n, 
\end{equation}
and therefore
\begin{align}\label{Xnupper}
\int_0^{T_n+s} X_n(s,t) &\leq \int_0^{T_n+s} Y_n(s, t) \: dt + \bigg| \int_0^{T_n+s_i} X_n(s_i, t) \: dt - \int_0^{T_n+s_i} Y_n(s_i, t) \: dt \bigg| \nonumber \\
&\hspace{.3in} + \bigg| \int_0^{T_n+s_i} Y_n(s_i, t) \: dt - \int_0^{T_n+s_{i+1}} Y_n(s_{i+1}, t) \: dt \bigg| \nonumber \\
&\hspace{.3in} + \bigg| \int_0^{T_n+s_{i+1}} Y_n(s_{i+1}, t) \: dt - \int_0^{T_n+s} Y_n(s, t) \: dt \bigg| + (s - s_i) \log n. 
\end{align}
Likewise,
\begin{equation}\label{905}
\int_0^{T_n+s} X_n(s,t) \geq \int_0^{T_n+s_{i+1}} X_n(s_{i+1}, t) \: dt - (s_{i+1} - s) \log n,
\end{equation}
and so
\begin{align}\label{Xnlower}
\int_0^{T_n+s} X_n(s,t) &\geq \int_0^{T_n+s} Y_n(s, t) \: dt - \bigg| \int_0^{T_n+s_{i+1}} X_n(s_{i+1}, t) \: dt - \int_0^{T_n+s_{i+1}} Y_n(s_{i+1}, t) \: dt \bigg| \nonumber \\
&\hspace{.3in} - \bigg| \int_0^{T_n+s_{i+1}} Y_n(s_{i+1}, t) \: dt - \int_0^{T_n+s} Y_n(s, t) \: dt \bigg| - (s_{i+1} - s) \log n.
\end{align}
Since $s_{i+1} - s_i \leq 2/(\log n)^2$ for $i = 0, 1, \dots, m-1$, it follows that 
\begin{align}
\sup_{0 \leq i \leq m-1} &\sup_{s \in [s_i, s_{i+1}]} \bigg| \int_0^{T+s} X_n(s, t) \: dt - \int_0^{T+s} Y_n(s,t) \: dt \bigg| \nonumber \\
&\leq \frac{2}{\log n} + \sup_{0 \leq i \leq m}  \bigg| \int_0^{T_n + s_i} X_n(s_i, t) \: dt - \int_0^{T_n + s_i} Y_n(s_i, t) \: dt \bigg| \nonumber \\
&\hspace{.3in}+ 2 \sup_{0 \leq i \leq m-1} \sup_{s \in [s_i, s_{i+1}]} \bigg| \int_0^{T_n + s} Y_n(s,t) \: dt - \int_0^{T_n + s_{i+1}} Y_n(s_{i+1}, t) \: dt \bigg|. \nonumber
\end{align}
Clearly $2/\log n < \eps$ for sufficiently large $n$.  Therefore, by (\ref{XYsi}) and Lemma \ref{YA},
\begin{equation}\label{finA}
\limsup_{n \rightarrow \infty} P \bigg( \sup_{0 \leq i \leq m-1} \sup_{s \in [s_i, s_{i+1}]} \bigg| \int_0^{T+s} X_n(s, t) \: dt - \int_0^{T+s} Y_n(s,t) \: dt \bigg| {\bf 1}_{G_i} > 10 \eps \bigg) \leq 4T\eps.
\end{equation}

We next consider the case in which $H_i$ occurs.  Recall that on $H_i$, we have $u_{j(i)} \in (s_i, s_{i+1})$.  Let $$R_i = \lim_{s \uparrow u_{j(i)}} N_n \bigg( \frac{s}{\log n}, \frac{s - s_i}{\log n} \bigg).$$  That is, $R_i$ is the number of individuals in the population immediately before time $s_i/\log n$ with descendants in the population at least until immediately before time $u_{j(i)}/\log n$.  Note that $N_n(s/\log n, (s - s_i)/\log n) \geq R_i$ for all $s \in [s_i, u_{j(i)})$.  Therefore, by (\ref{X2}), for $s \in [s_i, u_{j(i)})$ we have
$$\int_0^{T_n + s} X_n(s,t) \: dt \geq \int_0^{T_n + s_i} X_n(s_i,t) \: dt - \frac{2(s - s_i)}{\log n} - \frac{(\log n)(T_n + T)(n - R_i + 1)}{n}.$$ Combining this result with (\ref{895}) and the fact that $s_{i+1} - s_i \leq 2/\log n$ gives
$$\bigg| \int_0^{T_n+s} X_n(s,t) \: dt - \int_0^{T_n + s_i} X_n(s_i,t) \: dt \bigg| \leq \frac{2}{\log n} + \frac{4}{(\log n)^3} + \frac{(\log n)(T_n + T)(n - R_i + 1)}{n}.$$  It follows that if $s \in [s_i, u_{j(i)})$, then
\begin{align}\label{XYB1}
\bigg| \int_0^{T_n+s} &X_n(s,t) \: dt - \int_0^{T_n+s} Y_n(s,t) \: dt \bigg| \nonumber \\
&\leq \frac{2}{\log n} + \frac{4}{(\log n)^3} + \frac{(\log n)(T_n + T)(n - R_i + 1)}{n} \nonumber \\
&\hspace{.3in}+ \bigg| \int_0^{T_n+s_i} X_n(s_i, t) \: dt - \int_0^{T_n+s_i} Y_n(s_i, t) \: dt \bigg| \nonumber \\
&\hspace{.3in}+ \bigg| \int_0^{T_n+s_i} Y_n(s_i, t) \: dt - J(u_{j(i)}) - \int_0^{T_n + s_{i+1}} Y_n(s_{i+1}, t) \: dt \bigg| \nonumber \\
&\hspace{.3in}+  \bigg| \int_0^{T_n+s} Y_n(s, t) \: dt - J(u_{j(i)}) - \int_0^{T_n + s_{i+1}} Y_n(s_{i+1}, t) \: dt \bigg|. 
\end{align}
Likewise, let $$S_i = N_n \bigg( \frac{s_{i+1}}{\log n}, \frac{s_{i+1} - u_{j(i)}}{\log n}- \bigg),$$ and note that $N_n(s_{i+1}/\log n, (s_{i+1} - s)/\log n) \geq S_i$ for all $s \in (u_{j(i)}, s_{i+1}]$.  Therefore, if $s \in (u_{j(i)}, s_{i+1}]$, then (\ref{X2}) gives
\begin{equation}\label{last}
\int_0^{T_n+s} X_n(s,t) \: dt \leq \int_0^{T_n + s_{i+1}} X_n(s_{i+1},t) \: dt + \frac{2(s_{i+1} - s)}{\log n} + \frac{(\log n)(T_n + T)(n - S_i + 1)}{n}.
\end{equation}
Because $X_n(u_{j(i)}, t) = \lim_{s \downarrow u_{j(i)}} X_n(s,t)$ for Lebesgue almost all $t > 0$, equation (\ref{last}) holds for all $s \in [u_{j(i)}, s_{i+1}]$.  Combining this result with (\ref{905}) gives
$$\bigg| \int_0^{T_n+s} X_n(s,t) \: dt - \int_0^{T_n + s_{i+1}} X_n(s_{i+1},t) \: dt \bigg| \leq \frac{2}{\log n} + \frac{4}{(\log n)^3} + \frac{(\log n)(T_n + T)(n - S_i + 1)}{n}.$$  Therefore, if $s \in [u_{j(i)}, s_{i+1}]$, then
\begin{align}\label{XYB2}
\bigg| \int_0^{T_n+s} &X_n(s,t) \: dt + J(s) - \int_0^{T_n+s} Y_n(s,t) \: dt \bigg| \nonumber \\
&\leq \frac{2}{\log n} + \frac{4}{(\log n)^3} + \frac{(\log n)(T_n + T)(n - S_i + 1)}{n} \nonumber \\
&\hspace{.3in}+ \bigg| \int_0^{T_n+s_{i+1}} X_n(s_{i+1}, t) \: dt - \int_0^{T_n+s_{i+1}} Y_n(s_{i+1}, t) \: dt \bigg| \nonumber \\
&\hspace{.3in}+ \bigg| \int_0^{T_n+s} Y_n(s,t) - J(s) - \int_0^{T_n + s_{i+1}} Y_n(s_{i+1}, t) \: dt \bigg|.
\end{align}
By (\ref{XYB1}) and (\ref{XYB2}), and the fact that $J(s) = 0$ if $s \in [s_i, u_{j(i)}) \cup (u_{j(i)}, s_{i+1}]$, 
\begin{align}\label{supsup3}
&\sup_{0 \leq i \leq m-1} \sup_{s \in [s_i, s_{i+1}]} \bigg| \int_0^{T+s} X_n(s,t) \: dt + J(s) - \int_0^{T_n+s} Y_n(s,t) \: dt \bigg| {\bf 1}_{H_i} \nonumber \\
&\hspace{.2in}\leq \frac{2}{\log n} + \frac{4}{(\log n)^3} + \sup_{0 \leq i \leq m-1} \frac{(\log n)(T_n + T)(2n - R_i - S_i + 2) {\bf 1}_{H_i}}{n} \nonumber \\
&\hspace{.5in}+ \sup_{0 \leq i \leq m} \bigg| \int_0^{T_n+s_i} X_n(s_i, t) \: dt - \int_0^{T_n+s_i} Y_n(s_i, t) \: dt \bigg| \nonumber \\
&\hspace{.5in}+ 2 \sup_{0 \leq i \leq m-1} \sup_{s \in [s_i, s_{i+1}]} \bigg| \int_0^{T_n+s} Y_n(s,t) - J(u_{j(i)}) {\bf 1}_{\{s \leq u_{j(i)}\}} - \int_0^{T_n+s_{i+1}} Y_n(s_{i+1}, t) \: dt \bigg| {\bf 1}_{H_i}.
\end{align}

We claim that conditional distributions of $n-R_i$ and $n-S_i$ given $H_i$ are each stochastically dominated by the distribution of $n - N_n(0, 2/(\log n)^3)$, which is the number of blocks lost by time $2/(\log n)^3$ in a Bolthausen-Sznitman coalescent started with $n$ blocks.  To see this, note that $n - S_i$ is the number of blocks lost by time $(s_{i+1} - u_{j(i)})/\log n \leq 2/(\log n)^3$ in a Bolthausen-Sznitman coalescent.  Likewise $n - R_i$ is the number of blocks lost by time $(u_{j(i)} - s_i)/(\log n) \leq 2/(\log n)^3$ in a Bolthausen-Sznitman coalescent started with $n$ lineages, if we disallow the instantaneous merger caused by the birth event at time $u_{j(i)}$.  Because $H_i$ requires that exactly one of the $u_k$ falls in $(s_i, s_{i+1})$, the effect of conditioning on $H_i$ is the same as suppressing all mergers in which each lineage participates with probability greater than $\eps^3$.  This can only reduce the number of blocks lost.  Thus, we have $$E[n - R_i|H_i] \leq \frac{2 \gamma_n}{(\log n)^3} \leq \frac{2n}{(\log n)^2}$$ by (\ref{gamnbd}).  By the same argument, $E[n - S_i|H_i] \leq 2n/(\log n)^2$.  Thus, using (\ref{PBi}) and the fact that $m \leq T(\log n)^2$,
\begin{align}
E \bigg[ &\sup_{0 \leq i \leq m-1} \frac{(\log n)(T_n+T)(2n - R_i - S_i + 2){\bf 1}_{H_i}}{n} \bigg] \nonumber \\
&\leq \frac{(\log n)(T_n+T)}{n} \sum_{i=0}^{m-1} P(H_i) \bigg(\frac{4n}{(\log n)^2} + 2 \bigg) \leq \frac{2(\log n)(T_n+T)m}{\eps^3 n (\log n)^2} \bigg( \frac{4n}{(\log n)^2} + 2 \bigg) \rightarrow 0 \nonumber
\end{align}
as $n \rightarrow \infty$.  Hence, by Markov's Inequality,
$$\frac{2}{\log n} + \frac{4}{(\log n)^3} + \sup_{0 \leq i \leq m-1} \frac{(\log n)(T_n + T)(2n - R_i - S_i + 2) {\bf 1}_{H_i}}{n} \rightarrow_p 0.$$  Combining this result with (\ref{supsup3}), (\ref{XYsi}), and Lemma \ref{YB}, we get
\begin{equation}\label{finB}
\limsup_{n \rightarrow \infty} P \bigg( \sup_{0 \leq i \leq m-1} \sup_{s \in [s_i, s_{i+1}]} \bigg| \int_0^{T+s} X_n(s,t) \: dt + J(s) - \int_0^{T+s} Y_n(s,t) \: dt \bigg| {\bf 1}_{H_i} > 12 \eps \bigg) = 0.
\end{equation}
The result (\ref{mainsts}) follows from (\ref{finA}), (\ref{finB}), and the fact that with probability tending to $1$ as $n \rightarrow \infty$, we have $G_i \cup H_i$ for $i = 0, 1, \dots, m-1$ by (\ref{GHprob}).
\end{proof}

\end{document}